\documentclass[11pt]{article}

\usepackage[utf8]{inputenc} 
\usepackage[T1]{fontenc}    
\usepackage{url}            
\usepackage{booktabs}       
\usepackage{nicefrac}       
\usepackage{microtype}      
\usepackage{multirow}
\usepackage[noblocks]{authblk}
\usepackage{amssymb,arydshln}
\usepackage[nodayofweek]{datetime}
\usepackage{float}
\usepackage{bm}

\usepackage[shortlabels]{enumitem}
\setlist[itemize]{itemindent=0ex,itemsep=-0.5ex,leftmargin=3ex,topsep=5pt}
\setlist[enumerate]{label={\arabic*)},itemindent=0ex,itemsep=-0.5ex,leftmargin=3.6ex,topsep=5pt,labelwidth=3.6ex,labelsep=1.5ex}

\usepackage[font=small,labelfont=bf]{caption}
\usepackage{amsmath,amsfonts,nccmath,mathtools,mathrsfs}
\usepackage{tcolorbox}
\usepackage{xcolor}
\usepackage{setspace}
\usepackage[colorlinks = true,
            linkcolor = blue,
            urlcolor  = blue,
            citecolor = blue,
            anchorcolor = blue]{hyperref}

\usepackage[margin=1in]{geometry}

\pgfmathdeclarefunction{cubic}{1}{%
  \pgfmathparse{-2*(#1+2)*(#1+2)*(#1-2)+40}%
}

\pgfmathdeclarefunction{bicuadratic}{1}{%
  \pgfmathparse{(#1-1)*(#1-1)*(#1-1)*(#1-1)+10}%
}

\pgfmathdeclarefunction{cuadratic}{1}{%
    \pgfmathparse{(-(2*#1)^2)+70}%
}

\usepackage{pifont}

\usepackage[numbers,merge,sort&compress]{natbib}
\usepackage{amsthm}
\usepackage{graphicx,color}
\usepackage{subcaption}

\newtheorem{theorem}{Theorem}[section]

\newtheorem{proposition}{Proposition}[section]

\newtheorem{lemma}{Lemma}[section]
\newtheorem{corollary}{Corollary}[section]

\newtheorem{assumption}{Assumption}

\theoremstyle{definition}
\newtheorem{definition}{Definition}[section]
\newtheorem{example}{Example}[section]

\newtheorem{remark}{Remark}[section]

\usepackage[nameinlink,capitalise]{cleveref}
\crefname{equation}{}{}
\crefname{theorem}{Theorem}{Theorems}
\crefname{definition}{Definition}{Definitions}
\crefname{corollary}{Corollary}{Corollaries}
\crefname{example}{Example}{Examples}
\crefname{assumption}{Assumption}{Assumptions}
\crefname{lemma}{Lemma}{Lemmas}
\crefname{proposition}{Proposition}{Propositions}
\crefname{figure}{Figure}{Figures}
\crefname{table}{Table}{Tables}
\crefname{fact}{Fact}{Facts}
\crefname{conjecture}{Conjecture}{Conjectures}
\crefname{section}{Section}{Sections}
\crefname{appendix}{Appendix}{Appendices}
\Crefname{equation}{}{}
\Crefname{theorem}{Theorem}{Theorems}
\Crefname{corollary}{Corollary}{Corollaries}
\Crefname{example}{Example}{Examples}
\Crefname{lemma}{Lemma}{Lemma}
\Crefname{proposition}{Proposition}{Proposition}
\Crefname{figure}{Figure}{Figures}
\Crefname{table}{Table}{Tables}
\Crefname{section}{Section}{Sections}
\Crefname{appendix}{Appendix}{Appendices}

\newcommand{\tr}{{{\mathsf T}}}

\newcommand{\tran}{{{\mathsf T}}}

\newcommand{\mK}{{\mathsf{K}}}

\DeclarePairedDelimiterX\setv[2]{\{}{\}}{#1 \;\delimsize\vert\; #2}
\DeclarePairedDelimiterX\setc[2]{\{}{\}}{#1 : #2}

\makeatletter
\newcommand{\removelatexerror}{\let\@latex@error\@gobble}
\makeatother

\title{\bf No Spurious Local Minima in Full-Order Linear Quadratic Gaussian Control%
\thanks{The work of Yujie Tang is supported by the National Natural Science Foundation of China through Grant 72431001. The work of Yang Zheng is supported by NSF CAREER 2340713.}}
\author[1]{Yang Zheng}
\author[2]{Yujie Tang}

\affil[1]{\small Department of Electrical and Computer Engineering, University of California San Diego}
\affil[2]{\small Department of Control Science and Systems Engineering, Peking University}
\date{} 

\begin{document}

\maketitle
\vspace{-4mm}

\begin{abstract}

This paper studies the nonconvex optimization landscape of Linear Quadratic Gaussian (LQG) control under direct state-space parameterization. Although the LQG cost may possess suboptimal stationary points, whether it admits suboptimal local minima has remained open. We answer this question negatively: Every local minimum of the full-order LQG cost is globally optimal. More generally, at any controller order, every local minimum corresponding to an uncontrollable or unobservable realization attains the globally optimal LQG cost over all stabilizing policies of arbitrary orders. We further show that any suboptimal policy can be augmented with decoupled stable controller states, so that the augmented policy admits a direction of negative curvature with probability one. Consequently, any suboptimal stationary point in full-order LQG cost can be converted into a strict saddle with probability one. One key proof idea connects the state-space Hessian to a frequency-domain global-optimality condition derived from the Youla parameterization. These results reveal a benign local-minimum landscape despite the presence of suboptimal stationary points.

\end{abstract}

\section{Introduction}

Linear quadratic Gaussian (LQG) control is one of the foundational problems in optimal control \cite{zhou1996robust}. This classical problem provides a canonical setting for decision-making under partial observation and stochastic disturbances. 
Under standard assumptions, the elegant separation principle yields a globally optimal LQG controller by combining a linear quadratic regulator (LQR) with a Kalman filter, each obtained from an algebraic Riccati equation \cite[Chapter 14.9]{zhou1996robust}. 
Despite this explicit~solution, the LQG cost is highly nonconvex when expressed directly in the controller parameters~\cite{hyland1984optimal,tang2023analysis,zheng2022escaping}. 

Motivated in part by the successes in reinforcement learning, direct policy optimization has received renewed interest as an alternative approach to controller synthesis \cite{hu2023toward,recht2019tour,TalebiZhengKraislerLiMesbahi2026PolicyOptimization,zheng2026benignI,zheng2026benignII,zheng2025extended}. It searches directly over controller parameters, without resorting to model-based algebraic Riccati equations or transforming the problem into a convex formulation, and is therefore naturally amenable to data-driven and model-free implementations \cite{hu2023toward}. This perspective has revealed a remarkably benign optimization landscape for  LQR: Under mild assumptions, its stationary point is unique and globally optimal; the cost is coercive, smooth over any sublevel set, and satisfies a gradient-dominance property \cite{fazel2018global,mohammadi2021convergence,fatkhullin2021optimizing,watanabe2026revisiting}. These properties are fundamental to establishing convergence guarantees for local search algorithms for solving LQR. The optimization landscape of the LQG problem, however, is considerably richer and more complicated. To handle partial observation, we need to search over the class of dynamic policies, whose state-space realizations are not unique and may further contain uncontrollable or unobservable modes. Consequently, the set of stabilizing dynamic policies can be disconnected, and the LQG cost can possess suboptimal stationary points \cite{tang2023analysis,zheng2022escaping}.

Recent work has begun to clarify this intricate landscape in LQG control \cite{tang2023analysis,zheng2022escaping,zheng2026benignI,zheng2026benignII,zheng2025extended,li2025policy,kraisler2024output,keivan2024case}. In particular, the set of full-order stabilizing policies is shown to have at most two path-connected components, which are related by a similarity transformation and are equivalent in the frequency domain \cite{tang2023analysis}. Moreover, every stationary point corresponding to a full-order controllable and observable (a.k.a, minimal) controller is globally optimal~\cite{tang2023analysis}. Consequently, any suboptimal stationary point must be a non-minimal controller.  
More recently, a unified framework of extended convex lifting was proposed in \cite{zheng2026benignI,zheng2026benignII,zheng2025extended}, which establishes global optimality for a broader class of \textit{nondegenerate} stationary policies, covering certain non-minimal controllers. In \cite{li2025policy}, a frequency-domain analysis exploits the convexity enabled by Youla parameterization and derives a necessary and sufficient condition for global optimality in LQG control. 
Nevertheless, these results do not characterize all degenerate, non-minimal stationary points. In particular, all previously known suboptimal stationary points are (strict or high-order) saddles, and existing theory does not rule out the possibility that a full-order LQG controller is a suboptimal local minimum. This leaves a fundamental question open: 

\vspace{-1mm}
\begin{center}
\emph{Can the full-order LQG cost possess a suboptimal local minimum?}
\end{center}
\vspace{-1mm}

\noindent This question is important from both geometric and algorithmic perspectives. A suboptimal~local minimum would trap local search algorithms, since no sufficiently small perturbation could decrease the cost. In contrast, saddle points can often be escaped through suitable perturbations or reparameterizations \cite{zheng2022escaping}. Resolving this question therefore determines whether the nonconvex landscape of full-order LQG control contains genuinely bad local minima or whether all suboptimal stationary points are necessarily saddles that may be escaped algorithmically.  

\subsection{Our contributions}
In this paper, we answer the question above negatively and further characterize the role of non-minimal controller realizations in the LQG landscape. Our main contributions are summarized~as~follows.

\begin{itemize}
    \item  \textbf{No spurious local minima in full-order LQG control.} 
We prove that, for any controller order $q$, every local minimum of $J_q$ corresponding to an uncontrollable or unobservable realization is globally optimal over stabilizing controllers of all possible orders (\cref{theorem:nonminimal-local-minimum-global}). This result is stronger than global optimality within the fixed-order policy class. Combining it with the known global optimality of controllable and observable stationary points \cite{tang2023analysis}, we conclude that every local minimum of the full-order LQG cost $J_n$ is globally optimal (\Cref{corollary:local-minimum-J-n}). We also provide reduced-order examples showing that the full-order condition and the non-minimality condition in these results cannot be removed in general (see \Cref{example:LQG-1,example:LQG-2}).

\item \textbf{Negative curvature through controller augmentation.} Given any suboptimal policy $\mK\in\mathcal{C}_q$, where $\mathcal{C}_q$ denotes the set of stabilizing policies of order $q$, we can augment it with a decoupled stable mode of eigenvalue $\lambda<0$, which does not change its input-output behavior or the LQG cost. We prove that, for almost every $\lambda<0$, the augmented policy $\mK\oplus\lambda I_p$ of order $q + p$ remains stabilizing and admits directions of both strictly negative and strictly positive curvatures for the LQG cost $J_{q+p}$ (\Cref{theorem:hessian-negative-direction}). In particular, if $\mK$ is stationary, then $\mK\oplus\lambda I_p$ is a strict saddle for almost every $\lambda<0$. 
In other words, any stationary point of the full-order LQG cost is either globally optimal or is convertible into a strict saddle with probability one (\Cref{corollary:stationary-point-in-LQG-control}). 
This result significantly strengthens the structured policy transformation approach in \cite{zheng2022escaping}.

\item \textbf{A connection between state-space curvature and frequency-domain optimality.} Our proofs rely on a new connection between the Hessian of the state-space LQG cost and a frequency-domain global-optimality condition. For a stabilizing policy $\mK$ of arbitrary order $q$, we construct a transfer matrix $\mathbf{R}_{\mK}(s)$ and prove in \Cref{theorem:global_optimality_RK} that 
\[ 
\mK \text{ is globally optimal} \quad\Longleftrightarrow\quad \mathbf{R}_{\mK}(s)\equiv 0. 
\]
This characterization is sufficient and necessary and applies to controllers of arbitrary order, which greatly strengthens the characterization in \cite{zheng2022escaping}. This result also  appeared~in~\cite{li2025policy}. We remove~the controller-order condition imposed in \cite{li2025policy} and present a more direct proof.~One key proof idea is to utilize the convex reformulation of the LQG problem using Youla parameterization in the frequency domain. We further derive the explicit Hessian identity 
$
D^2J_{q+p}\!\left(\mK\oplus\lambda I_p\right) [\Delta,\Delta] = 4\operatorname{tr}\!\left( \Delta_{21}\mathbf{R}_{\mK}(-\lambda)^\top\Delta_{12} \right) 
$  
for a suitable class of directions $\Delta$ (see \Cref{lemma:hessian_augmented_scalar}). Combining this identity with the Kalman canonical decomposition allows local second-order information in the state-space parameterization to certify global optimality in the frequency domain, i.e., $\mathbf{R}_{\mK}(s)\equiv 0$.
\end{itemize}

\subsection{Related work}

\noindent\textbf{Policy optimization and landscape analysis for linear control.}
Direct policy optimization has received renewed interest as a flexible approach to controller synthesis that is naturally compatible with data-driven and model-free settings \cite{hu2023toward,recht2019tour,TalebiZhengKraislerLiMesbahi2026PolicyOptimization,zheng2026benignI,zheng2026benignII,zheng2025extended}.  
Most theoretical~developments have focused on the setting with full-state feedback. In particular, both continuous-time and discrete-time LQR problems are shown to enjoy the gradient dominance property, and thus local search algorithms can converge to the optimal LQR controller under mild conditions \cite{fazel2018global,mohammadi2021convergence,fatkhullin2021optimizing}. A unified gradient-dominance analysis has recently been established in \cite{watanabe2026revisiting,watanabe2026gradient}.  
Beyond the classical LQR, related landscape and convergence results have been established for risk-sensitive control \cite{zhang2021policy,pai2026policy}, LQ differential game \cite{zhang2019policy,watanabe2025semidefinite},  Markov jump LQ control \cite{jansch2022policy}, Kalman filtering \cite{zhang2023learning,qian2025model}, and robust control \cite{guo2022global,watanabe2026policy,wang2026zeroth}. We refer to \cite{hu2023toward,TalebiZhengKraislerLiMesbahi2026PolicyOptimization} for two comprehensive surveys.

When we only have partial observation, the landscape of policy optimization becomes richer and more involved, where optimal policies are generally dynamic. Classical work derived first-order optimality conditions for fixed-order dynamic controllers through coupled optimal-projection equations \cite{hyland1984optimal}, but these conditions do not characterize global optimality. More recent landscape analyses showed that the set of full-order stabilizing policies has at most two path-connected components, and that all minimal stationary points for LQG control are globally optimal \cite{tang2023analysis}; see also \cite{zheng2026benignI,zheng2026benignII,zheng2025extended} for further global guarantees on non-degenerate stationary points. Therefore, suboptimal stationary points must be non-minimal controllers. Some of these points are in fact high-order saddles whose Hessians vanish, which motivates structured perturbation schemes that convert them into strict saddles \cite{zheng2022escaping}. A related study exploits the similarity symmetry through a quotient-manifold formulation, leading to local convergence guarantees over minimal LQG controllers \cite{kraisler2024output}.
 
In this work, we characterize local minima of LQG control without imposing minimality or other nondegeneracy conditions, and prove that the full-order LQG landscape has no spurious local~minima.

\medskip

\noindent\textbf{Hidden convexity and controller parameterizations.} It is known that many classical control problems are nonconvex, but they often enjoy hidden convexity \cite{boyd1991linear}. 
Classical solutions to LQG control exploit hidden convexity through Riccati equations \cite{zhou1996robust} or suitable changes of variables leading to linear matrix inequalities (LMIs) \cite{boyd1994linear,gahinet1994linear,scherer1997multiobjective}. In the frequency domain, the Youla parameterization represents all stabilizing controllers through a stable transfer matrix and converts the LQG problem into a convex $\mathcal{H}_2$ optimization problem \cite{youla1976modern}. Alternative convex representations can be established using system-level and input-output parameterizations \cite{wang2019system,furieri2019input,zheng2022system}, all of which use a notion of closed-loop convexity \cite{zheng2021equivalence,boyd1991linear}. Although these parameterizations enable global controller synthesis, they optimize over transformed variables and therefore do not directly characterize the local geometry of the original state-space policy parameters. 
More recently, the extended convex lifting (\texttt{ECL}) framework established global optimality for a broad class of nondegenerate stationary LQG policies \cite{zheng2026benignI,zheng2026benignII,zheng2025extended}. The key idea of \texttt{ECL} is to connect the nonconvex state-space formulation with an LMI-based convex reformulation, which identifies the underlying sources that enable benign nonconvex landscapes in a range of control problems. In \cite{li2025policy}, a complementary frequency-domain analysis derives a necessary and sufficient global-optimality condition and proposes a globally convergent algorithm in the frequency domain. We finally note that an alternative input-output history parameterization further provides new insights into LQG control \cite{sadamoto2025policy}. 

One key step in our proofs is motivated by \cite{li2025policy}, and we develop a sharper connection between the frequency-domain and state-space viewpoints. In particular, we obtain an order-independent global optimality condition using a centered Youla parameterization. We also establish an explicit identity connecting its optimality residual to the Hessian of the LQG cost at augmented state-space policies. This exact connection allows hidden convexity in the frequency domain to characterize local minima and negative curvature in the original policy space.

\subsection{Paper outline}

The remainder of this paper is organized as follows. We present the problem formulation of LQG control in \Cref{section:preliminaries}. The main results are provided in \Cref{section:main-results}, and the technical proofs are given in \Cref{section:proofs}. \Cref{section:conclusion} concludes this paper. The appendix provides additional technical proofs on Youla parameterization and Hessian computation. 

\vspace{3pt}

\noindent \textbf{Notation.} The set of positive integers is denoted by $\mathbb{Z}_{>0}$, and $\mathbb{Z}_{\geq 0}\coloneq \mathbb{Z}_{>0}\cup\{0\}$. The set of $q\times q$ real Hurwitz stable matrices is denoted by $\mathfrak{H}_q$. The set of $q\times q$ real symmetric matrices is denoted by $\mathbb{S}^q$, and $\mathbb{S}^q_+,\mathbb{S}^q_{++}\subseteq\mathbb{S}^q$ are the sets of positive semidefinite and positive definite matrices, respectively. The set of all $q\times q$ real invertible matrices is denoted by $\mathrm{GL}_q$. $0_{p\times q}$ denotes the ${p\times q}$ zero matrix, and $I_p$ denotes the $p\times p$ identity matrix; their subscripts may be omitted when they can be inferred from the context. The block-diagonal matrix formed by $A_1,\ldots,A_k$ is denoted by $\operatorname{diag}(A_1,\ldots,A_k)$. The set of all eigenvalues of a square matrix $M$ is denoted by $\operatorname{eig}(M)$. The Hessian of a twice differentiable real-valued function $f$ at $x$ is denoted by $D^2 f(x)$ and is to be viewed as a symmetric bilinear form.

\vspace{3pt}

\section{LQG Control and Problem Formulation} \label{section:preliminaries}

Consider the LTI dynamic system
\begin{equation}
\label{eq:plant}
\begin{aligned}
\dot{x}(t) ={} & Ax(t) + Bu(t) + W^{1/2}w(t), \\
y(t) ={} & Cx(t) + V^{1/2}v(t),
\end{aligned}
\end{equation}
where $x(t)\in \mathbb{R}^n$ is the system state, $u(t)\in \mathbb{R}^m$ is the control input, $y(t)\in \mathbb{R}^\ell$ is the output measurement, and $w \in \mathbb{R}^n,v \in \mathbb{R}^\ell$ are white Gaussian noises with identity intensity matrices, i.e., $\mathbb{E}[w(t)w(\tau)^\tr]=\delta(t-\tau)I_n$ and $\mathbb{E}[v(t)v(\tau)^\tr]=\delta(t-\tau)I_{\ell}$. We assume $w$ and $v$ are mutually independent. 
In \Cref{eq:plant}, we have $A\in\mathbb{R}^{n\times n}$, $B\in\mathbb{R}^{n\times m}$,  $C\in\mathbb{R}^{\ell\times n}$, $W \in \mathbb{S}^{n}_{+}$ and $V \in \mathbb{S}^{\ell}_{++}$. 
We consider the following performance signal 
\begin{equation} \label{eq:performance-signal}
    z(t) = \begin{bmatrix}
Q^{1/2}x(t) \\ R^{1/2} u(t)
\end{bmatrix},
\end{equation}
where $Q \in \mathbb{S}^n_+$ and $R\in \mathbb{S}^m_{++}$ are performance weight matrices. We aim to design a feedback policy to minimize the linear quadratic performance criterion
\begin{equation}
\label{eq:lqg-cost-original}
\lim_{T\to+\infty}\frac{1}{T} 
\mathbb{E}\!\left[\int_0^T
z(t)^\tr z(t)\,d t \right]= \lim_{T\to+\infty}\frac{1}{T}
\mathbb{E}\!\left[\int_0^T
\left(x(t)^\tr Qx(t)+u(t)^\tr Ru(t)\right)d t\right].
\end{equation}
For this goal, it is standard and also sufficient (see, e.g., \cite[Chapter 14]{zhou1996robust}) to consider a dynamic controller of the form
\begin{equation}
\label{eq:dynamic_controller}
\begin{aligned}
\dot{\xi}(t) ={} & A_{\mK}\xi(t)+B_\mK y(t), \\
u(t) ={} & C_\mK \xi(t)
\end{aligned}
\end{equation}
where $\xi(t) \in \mathbb{R}^q$ is the controller internal state; $A_{\mK}\in\mathbb{R}^{q\times q}$, $B_{\mK}\in\mathbb{R}^{q\times\ell}$ and $C_{\mK}\in\mathbb{R}^{m\times q}$ specify the controller dynamics. For notational simplicity, we parameterize the dynamic controller \cref{eq:dynamic_controller} by 
\[
\mK = \begin{bmatrix}
0_{m\times\ell} & C_{\mK} \\
B_{\mK} & A_{\mK}
\end{bmatrix} \in \mathbb{R}^{(m+q)\times (\ell + q)},
\]
and simply refer to $\mK$ as the \emph{policy} corresponding to the dynamic controller~\cref{eq:dynamic_controller}. The number $q$ is called the \emph{order} of the policy $\mK$, meaning the dimension of the controller internal state.\footnote{
The case $q=0$ is allowed and is understood to be the case where one sets $u(t)=0$ for all $t$; the corresponding policy is $\mK=0_{m\times\ell}$.
} If $q = n$, namely, the controller state and the system state have the same dimension, we say the policy $\mK$ is \textit{full-order}. It is a reduced-order policy if $q < n$. 

Substituting the policy \cref{eq:dynamic_controller} into the system \cref{eq:plant,eq:performance-signal},  the closed-loop system is given by
\begin{subequations}
\begin{equation} \label{eq:closed-loop-system}
\begin{aligned}
\begin{bmatrix}
\dot{x}(t) \\ \dot{\xi}(t)
\end{bmatrix}
= {} & A_{\mathrm{cl},\mK} \begin{bmatrix}
x(t) \\ \xi(t)
\end{bmatrix}
+ B_{\mathrm{cl},\mK}
\begin{bmatrix}
w(t) \\ v(t)
\end{bmatrix}, \\
z(t) ={} &  C_{\mathrm{cl},\mK} \begin{bmatrix}
x(t) \\ \xi(t)
\end{bmatrix},
\end{aligned}
\end{equation}
where we denote the closed-loop matrices as 
\begin{equation} \label{eq:closed-loop-matrices}
A_{\mathrm{cl},\mK}\coloneq \begin{bmatrix}
A & BC_\mK \\ B_\mK C & A_\mK
\end{bmatrix},
\ \ 
B_{\mathrm{cl},\mK}\coloneq \begin{bmatrix}
W^{1/2} & 0 \\ 0 & B_\mK V^{1/2}
\end{bmatrix},
\ \ 
C_{\mathrm{cl},\mK}\coloneq \begin{bmatrix}
Q^{1/2} & 0 \\
0 & R^{1/2}C_\mK
\end{bmatrix}.
\end{equation}   
\end{subequations}
Let
\[
\mathcal{V}_q\coloneq
\setc*{\mK = \begin{bmatrix}
0_{m\times\ell} & C_{\mK} \\
B_{\mK} & A_{\mK}
\end{bmatrix}}{A_{\mK}\in\mathbb{R}^{q\times q}, B_{\mK}\in\mathbb{R}^{q\times\ell}, C_{\mK}\in\mathbb{R}^{m\times q}}
\]
denote the linear space of all policies of order $q$. We say that $\mK\in\mathcal{V}_q$ internally stabilizes the plant~\cref{eq:plant} if $A_{\mathrm{cl},\mK}$ in \cref{eq:closed-loop-matrices} is Hurwitz stable (i.e., all its eigenvalues have negative real parts), and let
\[
\mathcal{C}_q \coloneq \setc*{\mK\in\mathcal{V}_q}{A_{\mathrm{cl},\mK}\text{ is Hurwitz stable}}
\]
denote the set of all internally stabilizing policies of order $q$.

From \cref{eq:closed-loop-system}, the closed-loop transfer function from $d=\begin{bmatrix} w^\tran & \!\!\!v^\tran\end{bmatrix}^\tran$ to $z$ is given by
\[
\mathbf{T}_{zd,\mK}(s) = C_{\mathrm{cl},\mK}(sI-A_{\mathrm{cl},\mK})^{-1} B_{\mathrm{cl},\mK}.
\]
For any $q\in\mathbb{Z}_{\geq 0}$ such that $\mathcal{C}_q$ is non-empty, we define the function $J_q:\mathcal{C}_q\rightarrow\mathbb{R}$ by the squared $\mathcal{H}_2$ norm of $\mathbf{T}_{zd,\mK}$ as 
$
J_q(\mK) \coloneq \left\|\mathbf{T}_{zd,\mK}\right\|_{\mathcal{H}_2}^2,\,\forall\mK\in\mathcal{C}_q.
$ 
It is a standard result in control theory that $J_q(\mK)$ equals the linear quadratic performance \cref{eq:lqg-cost-original} under the policy $\mK\in\mathcal{C}_q$. 

The problem of LQG control is formally given by 
\begin{equation} \label{eq:LQG-control}
    J^\star \coloneq \inf_{q\in\mathbb{Z}_{\geq 0}}\inf_{\mK\in\mathcal{C}_q} J_q(\mK).
\end{equation}
In other words, we aim to find the best dynamic policy \cref{eq:dynamic_controller} of arbitrary order that minimizes the linear quadratic performance \cref{eq:lqg-cost-original}. 
A policy $\mK\in\mathcal{C}_q$ is called \emph{globally optimal} if $J_q(\mK)=J^\star$. 
Under standard assumptions on the plant~\cref{eq:plant,eq:performance-signal}, it is known that there exists a globally optimal policy $\mK^\star$ of order $n$, which can be found by solving two Riccati equations; see e.g., {\cite[Theorem 14.7]{zhou1996robust}}.

\begin{assumption} \label{assuption:standard}
    We assume $R\succ0, V\succ0,\ Q\succeq0,\ W\succeq0$; $(A,B)$ and $(A,W^{1/2})$ are controllable; $(C,A)$ and $(Q^{1/2},A)$ are observable. 
\end{assumption}

\begin{theorem}[{\cite[Theorem 14.7]{zhou1996robust}}] \label{theorem:seperation-controller}
    With \cref{assuption:standard}, a globally optimal controller to \cref{eq:LQG-control} is~given~by 
    \begin{equation}
\label{eq:separation-controller}
A_{\mK,\rm sep}
=
A-BF-LC,\qquad
B_{\mK,\rm sep}=L,\qquad
C_{\mK,\rm sep}=-F,
\end{equation}
where $F=R^{-1}B^\tr S, L=PC^\tr V^{-1}$ with $P$ and $S$ being the unique positive semidefinite solutions to the following Riccati equations
\begin{subequations}
\begin{align}
\label{eq:filter-ARE}
AP+PA^\tr-PC^\tr V^{-1}CP+W&=0, \\
\label{eq:control-ARE}
A^\tr S+SA-SBR^{-1}B^\tr S+Q&=0. 
\end{align}
\end{subequations}
\end{theorem}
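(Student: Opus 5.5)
The plan is the classical completion-of-squares argument, reduced to two $\mathcal{H}_2$ orthogonality facts that come from the Riccati equations \cref{eq:filter-ARE,eq:control-ARE}.

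\emph{Step 1 (well-posedness).} Under \cref{assuption:standard}, $(A,B)$ is stabilizable and $(Q^{1/2},A)$ is detectable, so \cref{eq:control-ARE} has a unique stabilizing solution $S\succeq 0$ and $A-BF$ is Hurwitz. Dually, $(C,A)$ is detectable and $(A,W^{1/2})$ is stabilizable, so \cref{eq:filter-ARE} yields $P\succeq0$ with $A-LC$ Hurwitz. In the closed loop of \cref{eq:separation-controller}, use the coordinates $(x, e)$ with $e=x-\xi$. The closed-loop matrix becomes block triangular with diagonal blocks $A-BF$ and $A-LC$. Hence the separation controller lies in $\mathcal{C}_n$.

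\emph{Step 2 (lower bound for an arbitrary stabilizing policy).} Fix any $q$ and any $\mK\in\mathcal{C}_q$. Work in the time domain with the closed-loop stationary processes; this is legitimate because $A_{\mathrm{cl},\mK}$ is Hurwitz, so all second moments are finite and stationary limits exist. Let $\hat x$ be the Kalman estimate generated by $\dot{\hat x}=A\hat x+Bu+L(y-C\hat x)$ driven by the actual $u$ and $y$. The estimation error $\tilde x=x-\hat x$ obeys $\dot{\tilde x}=(A-LC)\tilde x+W^{1/2}w-LV^{1/2}v$, independently of $\mK$. Its stationary covariance is exactly $P$, by \cref{eq:filter-ARE} rewritten as a Lyapunov equation.

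\emph{Step 3 (orthogonality).} The key property is that $\tilde x(t)$ is uncorrelated with past innovations, and hence with $\hat x(t)$ and with $u(t)$. This holds because $u$ is a causal function of $y$, and $y$ is generated causally by the innovations. This is where Riccati \cref{eq:filter-ARE} enters: the innovation $y-C\hat x$ is white with intensity $V$. From orthogonality, $\mathbb{E}[x^\tr Qx]=\mathbb{E}[\hat x^\tr Q\hat x]+\operatorname{tr}(QP)$.

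Next apply Itô's formula to $\hat x^\tr S\hat x$ along $\dot{\hat x}=A\hat x+Bu+LV^{1/2}\nu$, with $\nu$ the normalized innovation, and use \cref{eq:control-ARE}. Taking stationary expectations, the derivative term vanishes and one gets
\[
\mathbb{E}[\hat x^\tr Q\hat x+u^\tr Ru]=\operatorname{tr}(L V L^\tr S)+\mathbb{E}\big[(u+F\hat x)^\tr R(u+F\hat x)\big].
\]
Therefore
\[
J_q(\mK)=\operatorname{tr}(QP)+\operatorname{tr}(SLVL^\tr)+\big\|R^{1/2}(u+F\hat x)\big\|^2\ge \operatorname{tr}(QP)+\operatorname{tr}(SPC^\tr V^{-1}CP).
\]
The right-hand side is independent of $q$ and $\mK$.

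\emph{Step 4 (attainment).} For the controller \cref{eq:separation-controller}, the controller state equals $\hat x$ and $u=-F\hat x$. The residual term therefore vanishes and the bound is attained. This gives $J_n(\mK_{\rm sep})=J^\star$.

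The main obstacle is Step 3. Making the orthogonality and the Itô/stationarity argument rigorous for arbitrary-order controllers requires a careful statement that innovations generate the same information as $y$. As a fallback, I would do the purely algebraic frequency-domain version instead. Write $\mathbf{T}_{zd,\mK}=\mathbf{T}_0+\mathbf{U}\mathbf{Q}\mathbf{V}$ via the Youla parameterization built on $F$ and $L$. Then show $\mathbf{U}^\sim\mathbf{T}_0$ and $\mathbf{T}_0\mathbf{V}^\sim$ lie in $\mathcal{H}_2^\perp$ using \cref{eq:filter-ARE,eq:control-ARE}. Pythagoras then gives $J\ge\|\mathbf{T}_0\|_{\mathcal{H}_2}^2$, with equality at $\mathbf{Q}=0$, which corresponds to the separation controller.
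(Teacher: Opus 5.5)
Your proof is correct, and it takes a different route from the paper. The paper does not prove \Cref{theorem:seperation-controller} itself. It imports the result from \cite[Theorem 14.7]{zhou1996robust}, whose argument is the frequency-domain separation proof: a Youla parameterization built on $F$ and $L$, then inner/co-inner orthogonality and Pythagoras. That is essentially your fallback. The paper's own contribution is the remark at the end of \cref{section:global-optimality}: verify $\mathbf{R}_{\mK}\equiv 0$ at the separation controller and invoke \Cref{theorem:global_optimality_RK}. That theorem rests on a Youla parameterization centered at an \emph{arbitrary} stabilizing policy, so convexity of $\Phi_{\mK}$ reduces global optimality over all orders to a first-order condition. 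Your stochastic completion of squares gives more. The identity $J_q(\mK)=\operatorname{tr}(QP)+\operatorname{tr}(SPC^\tr V^{-1}CP)+\mathbb{E}\|R^{1/2}(u+F\hat x)\|^2$ holds for every stabilizing policy of every order. It therefore yields $J^\star$ in closed form, and it reproduces $10(\sqrt{2}-1)\sqrt{2\sqrt{2}-1}$ in \Cref{example:LQG-1}. It also gives an exact expression for the suboptimality gap. The paper's route is only a yes/no certificate, and it leaves the verification of $\mathbf{R}_{\mK}\equiv 0$ to the reader. In exchange, it needs no stochastic calculus and reuses exactly the machinery behind the paper's main theorems.

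The obstacle you flag in Step 3 is milder than you fear: you never need the innovations to generate the same information as $y$. Write the innovation as $\varepsilon=y-C\hat x=C\tilde x+V^{1/2}v$. The definitions then give $\dot{\hat x}=A\hat x+BC_{\mK}\xi+L\varepsilon$ and $\dot\xi=B_{\mK}C\hat x+A_{\mK}\xi+B_{\mK}\varepsilon$. This system has state matrix exactly $A_{\mathrm{cl},\mK}$, so $\hat x(t)$ and $u(t)$ are outputs of a stable, strictly proper filter of $\{\varepsilon(s)\}_{s\le t}$. A stationary covariance computation gives $\mathbb{E}[\tilde x(t)\varepsilon(s)^\tr]=e^{(A-LC)(t-s)}(PC^\tr-LV)=0$ for $s<t$. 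This orthogonality is where \cref{eq:filter-ARE} and $L=PC^\tr V^{-1}$ enter; the whiteness of $\varepsilon$ is a consequence of it, not the mechanism. In the Itô step, split $L\varepsilon=LC\tilde x+LV^{1/2}v$. The first part contributes a zero-mean cross term because $\mathbb{E}[\hat x\tilde x^\tr]=0$. The second is a martingale increment in the $(w,v)$-filtration, with Itô correction $\operatorname{tr}(SLVL^\tr)$. Equivalently, the whole argument is Lyapunov-equation algebra for the stationary covariance of $(\tilde x,\hat x,\xi)$.
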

The optimal controller \cref{eq:separation-controller} admits an observer-based form and represents the celebrated separation principle: It can be viewed as a combination of the optimal LQR gain $F$ and optimal Kalman filter gain $L$. The controller state $\xi$ is the optimal Kalman estimation of the system state $x$.  

Consequently, it suffices to optimize over full-order dynamic policies:
$
    J^\star=\min_{\mK\in\mathcal{C}_n}J_n(\mK).
$ 
This full-order LQG cost function $J_n$ is still highly nonconvex as its domain may be disconnected,  and there may exist suboptimal saddle points \cite[Theorem 5]{tang2023analysis}, \cite[Example 5]{zheng2026benignI}. It is known that the stabilizing set of full-order dynamic policies $\mathcal{C}_n$ has at most two path-connected components \cite[Theorem 1]{tang2023analysis}. Furthermore, for a controllable and observable policy $\mK \in \mathcal{C}_n$, if it is a stationary point, i.e., $\nabla J_n(\mK) = 0$, then it must be globally optimal \cite[Theorem 6]{tang2023analysis}. Consequently, any suboptimal stationary point of $J_n$ must be uncontrollable or unobservable in $\mathcal{C}_n$. 

In the remainder of the paper, we allow arbitrary controller orders and study the local minima and stationary points of $J_q$ over $\mathcal{C}_q$. The full-order case $q=n$ will be an important special case of our main results.

\section{Main Results} \label{section:main-results}

In this section, we summarize our main technical results, and their proofs are postponed to \cref{section:global-optimality} after introducing suitable techniques in \cref{section:Hessian,section:local-minima-Canonical-form}.   

\subsection{Global optimality}

Recall that a policy $\mK\in\mathcal{C}_q$ is called controllable if $(A_\mK,B_\mK)$ is controllable, and observable if $(C_\mK,A_\mK)$ is observable. A policy that is both controllable and observable is also called \textit{minimal}.\footnote{This is the standard systems-theoretic notion of minimality: The controller realization \cref{eq:dynamic_controller} has the smallest possible internal-state dimension among all realizations with the same input-output behavior; see \cite[Theorem 3.16]{zhou1996robust}. It should not be confused with local or global minima in the LQG problem \cref{eq:LQG-control}.}

Our first result provides a global optimality characterization for local minima of the LQG cost $J_q$ that corresponds to an uncontrollable or unobservable policy. Its proof is postponed to \cref{section:global-optimality}.

\begin{theorem} \label{theorem:nonminimal-local-minimum-global}
Let $q\in\mathbb{Z}_{>0}$ satisfy $\mathcal{C}_q\neq\varnothing$, and suppose $\mK\in\mathcal{C}_q$ is a local minimum of $J_q$ which is uncontrollable or unobservable. Then $\mK$ is a globally optimal policy, i.e.,
\[
J_q(\mK) =J^\star = \min_{\mK'\in\mathcal{C}_n}J_n(\mK').
\]
\end{theorem}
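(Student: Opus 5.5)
The plan is to reduce the statement to the frequency-domain characterization that $\mK$ is globally optimal if and only if $\mathbf{R}_{\mK}(s)\equiv 0$ (\Cref{theorem:global_optimality_RK}), and to extract this vanishing from second-order information at the local minimum. Suppose, without loss of generality, that $\mK$ is uncontrollable; the unobservable case follows by the dual argument, or by applying the same reasoning to the dual plant $(A^\tr,C^\tr,B^\tr)$, which exchanges the roles of $B_\mK$ and $C_\mK^\tr$. By the Kalman canonical decomposition, and since $J_q$ is invariant under similarity transformations $\mK\mapsto \operatorname{diag}(I,T)\,\mK\,\operatorname{diag}(I,T^{-1})$, which preserve local minimality, we may assume
\[
A_\mK=\begin{bmatrix} A_{11} & A_{12}\\ 0 & A_{22}\end{bmatrix},\qquad B_\mK=\begin{bmatrix} B_1\\ 0\end{bmatrix},\qquad C_\mK=\begin{bmatrix} C_1 & C_2\end{bmatrix},
\]
with the uncontrollable block of size $p\ge 1$. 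The uncontrollable modes are not excited by $y$. Internal stability of $A_{\mathrm{cl},\mK}$, which is block triangular after this reordering, forces $A_{22}$ to be Hurwitz. The reduced policy $\mK_r=(A_{11},B_1,C_1)\in\mathcal{C}_{q-p}$ then has the same transfer function and cost.

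The key step is to show that $\mK$ is, up to a cost-preserving perturbation path, the augmentation $\mK_r\oplus\lambda I_p$ (or close enough to it), so that the Hessian identity of \Cref{lemma:hessian_augmented_scalar},
\[
D^2J\bigl(\mK_r\oplus\lambda I_p\bigr)[\Delta,\Delta]=4\operatorname{tr}\bigl(\Delta_{21}\mathbf{R}_{\mK_r}(-\lambda)^\tr\Delta_{12}\bigr),
\]
applies. First I would argue that the set of policies with the given block-triangular structure, fixed $(A_{11},B_1,C_1)$, $B_2=0$, and arbitrary $A_{12},C_2$ and Hurwitz $A_{22}$, all achieve the same cost $J_q(\mK)$. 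Since any small change of $(A_{12},A_{22},C_2)$ stays in this set, every nearby policy in the set is also a local minimum of $J_q$ with the same value. Local-minimum status is closed along such level sets near $\mK$, and we can move within a neighborhood. I would then perturb $A_{22}$ to a diagonalizable matrix with a real eigenvalue, or more generally use a complex eigenvector, and choose $A_{12},C_2$ to make the relevant block decoupled in the appropriate coordinates. If moving all the way to the form $\lambda I_p$ is not possible locally, I would instead rederive the Hessian along directions $\Delta$ that perturb only $B_2$ (the ``$\Delta_{21}$'' entries feeding $y$ into the uncontrollable block) and $C_2$ or $A_{12}$ (the ``$\Delta_{12}$'' coupling). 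The second derivative along $t\mapsto \mK+t\Delta$ is then a bilinear cross term of the form $\operatorname{tr}(\Delta_{21}\mathbf{R}_{\mK_r}(-\mu)^\tr\Delta_{12})$, with $\mu$ an eigenvalue of $A_{22}$ and the transfer expression evaluated against left and right eigenvectors.

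A local minimum gives $D^2J\succeq 0$, but a bilinear form $\operatorname{tr}(\Delta_{21}M^\tr\Delta_{12})$ in independent $\Delta_{21},\Delta_{12}$ is indefinite unless $M=0$: choose $\Delta_{12}=-\epsilon\Delta_{21}M$-type directions. Hence $\mathbf{R}_{\mK_r}(-\mu)=0$ for every $\mu$ attainable as an eigenvalue of $A_{22}$ after small perturbations. These $\mu$ range over an open set of the left half-plane. Since $\mathbf{R}_{\mK_r}$ is rational, it vanishes identically, and \Cref{theorem:global_optimality_RK} gives $J_q(\mK)=J_{q-p}(\mK_r)=J^\star$.

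The main obstacle is the middle step: justifying that second-order information at the original, possibly non-diagonal, uncontrollable block yields the same residual $\mathbf{R}_{\mK_r}$ evaluated at $-\mu$. This requires handling nonzero $A_{12},C_2$, which may couple the first-order terms, and complex or defective eigenvalues of $A_{22}$. I would first try to exploit the flat level set to move to a nearby local minimum where $A_{22}$ has a simple real eigenvalue with a decoupled eigen-direction, and then apply the lemma to the one-dimensional restriction.
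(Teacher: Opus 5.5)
Your outer framework is the paper's: reduce to $\mathbf{R}\equiv 0$ via \cref{theorem:global_optimality_RK}, use the Hessian identity at an augmented point, note that a bilinear form in independent $(\Delta_{12},\Delta_{21})$ cannot be semidefinite unless it vanishes, and conclude by rationality. Your observation that nearby points of the flat level set are again local minima is also correct. However, the step you flag as the main obstacle is a genuine gap, and neither of your proposed ways around it works as stated.

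You cannot reach a decoupled configuration by moving within a neighborhood of $\mK$, because the couplings $A_{12},C_2$ are in general not small. No similarity transformation removes them either. If an uncontrollable mode is observable, its eigenvector $r$ satisfies $C_\mK r\neq 0$ in every realization (PBH test). So $\mK_r\oplus A_{22}$ is neither close to $\mK$ nor in its similarity orbit, and nothing says it is a local minimum.

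Your fallback formula at $\mK$ itself is asserted, not derived, and is wrong as stated. When the uncontrollable modes are observable (e.g.\ $C_2\neq 0$), perturbing $B_2$ alone makes them controllable and in general changes the controller transfer function. Hence $D^2J_q(\mK)$ is not a pure cross term on these directions, and nothing ties its cross term to $\mathbf{R}_{\mK_r}(-\mu)$ and eigenvectors. Finally, if $A_{22}$ has only non-real eigenvalues, no small perturbation produces the real eigenvalue needed for the $\lambda I_p$ identity.

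The missing idea is the scaling similarity $T_\epsilon=\operatorname{diag}(I,\epsilon^{-1}I)$. It leaves $B_\mK$ unchanged and replaces $(A_{12},C_2)$ by $(\epsilon A_{12},\epsilon C_2)$. Each $\mK_\epsilon$ is a local minimum by \cref{lemma:local_minimum_similarity_trans}, so $D^2J_q(\mK_\epsilon)\succeq 0$. Since $\mK_\epsilon\to\mK_r\oplus A_{22}\in\mathcal{C}_q$ and $D^2J_q$ is continuous, the Hessian at the decoupled point is positive semidefinite, even though that point need not be a local minimum.

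Apply this to every $A_{22}'$ near $A_{22}$ (your level-set step) and use \cref{corollary:vanishing_hessian_augmented}. Then the Hessian vanishes on $\mathcal{M}_{q-p,p}$ at $\mK_r\oplus\Lambda$ for all $\Lambda$ in an open set. Real analyticity of $\Lambda\mapsto D^2J_q(\mK_r\oplus\Lambda)[\Delta,\Delta]$ on the connected open set $\mathfrak{H}_p$ propagates this to $\Lambda=\lambda I_p$ for every $\lambda<0$. There \cref{lemma:hessian_augmented_scalar} applies, with no eigenvector or complex-eigenvalue analysis.

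Your direct route could alternatively be repaired. The $\Delta_{12}$-diagonal block of $D^2J_q(\mK)$ is zero, so semidefiniteness still forces the cross term to vanish. One can also show from the Lyapunov equations that this cross term does not depend on $A_{12},C_2$. But that must be proved, and it still leaves the passage from $A_{22}$ to $\lambda I_p$.
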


This result guarantees that any uncontrollable or unobservable local minimum of $J_q$ not only globally minimizes $J_q$ over $\mathcal{C}_q$, but also attains the globally optimal LQG cost $J^\star$ over all possible stabilizing dynamic policies; see \cref{eq:LQG-control}. We remark that the controller order $q$ can be different from the state dimension $n$. In fact, a reduced order $q < n$ does not play an important role in \Cref{theorem:nonminimal-local-minimum-global}.
To interpret this result, let $\mK\in\mathcal{C}_q$ be uncontrollable or unobservable. Its realization is then not minimal and contains at least one internal controller-state direction that is hidden from the controller input-output map. Such a hidden state does not affect the closed-loop behavior at $\mK$. If $\mK$ is further a local minimum of $J_q$, none of these hidden directions can be used to reduce the LQG cost locally. \Cref{theorem:nonminimal-local-minimum-global} shows that this local property has a \textit{striking} global consequence: no stabilizing dynamic policy of any order can achieve a lower LQG cost; in other words,  $J_q(\mK)=J^\star$.

This interpretation can also be used to certify global optimality of a local minimum that may correspond to a minimal policy in $\mathcal{C}_q$. The idea is to introduce an extra controller state that is uncontrollable and unobservable, and then to test whether the augmented policy admits any local descent direction. Concretely, for any 
$\mK
=
\begin{bmatrix}
0 & C_{\mK} \\
B_{\mK} & A_{\mK}
\end{bmatrix}
\in\mathcal{C}_q$ and $\Lambda\in\mathbb{R}^{p\times p}$, we denote an augmented controller
\begin{equation}
\label{eq:augmented-policy}
\mK\oplus\Lambda\coloneqq
\left[
\begin{array}{c:cc}
0 &C_{\mK} & 0\\ \hdashline
B_{\mK} & A_{\mK} & 0 \\
0 & 0 & \Lambda
\end{array}
\right].
\end{equation}
which corresponds to a dynamic policy of order $q+p$. Note that the appended controller subsystem in \cref{eq:augmented-policy} is completely decoupled and therefore does not affect the controller input-output map or the LQG cost. It is not difficult to see that $\mK\oplus\Lambda \in \mathcal{C}_{q+p}$ if $\Lambda$ is Hurwitz stable. Applying \Cref{theorem:nonminimal-local-minimum-global} to this augmented policy yields the following corollary. 

\begin{corollary} \label{corollary:local-minimum-global}
Let $q\in\mathbb{Z}_{>0}$ satisfy $\mathcal{C}_q\neq\varnothing$, and let $\lambda<0$ be arbitrary. Then $\mK \in\mathcal{C}_{q}$ is a globally optimal policy, i.e., $J_q(\mK) = J^\star$, whenever $\mK\oplus\lambda$ is a local minimum of $J_{q+1}$.
\end{corollary}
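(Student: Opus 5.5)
The plan is to apply \Cref{theorem:nonminimal-local-minimum-global} directly to the augmented policy $\mK\oplus\lambda\in\mathcal{V}_{q+1}$. Three facts are needed:
\begin{enumerate}
\item $\mK\oplus\lambda\in\mathcal{C}_{q+1}$;
\item $\mK\oplus\lambda$ is uncontrollable (and also unobservable);
\item $J_{q+1}(\mK\oplus\lambda)=J_q(\mK)$.
\end{enumerate}
Given these, the hypothesis that $\mK\oplus\lambda$ is a local minimum of $J_{q+1}$ lets us invoke \Cref{theorem:nonminimal-local-minimum-global} with order $q+1\in\mathbb{Z}_{>0}$ (note $\mathcal{C}_{q+1}\neq\varnothing$ by fact 1). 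This yields $J_{q+1}(\mK\oplus\lambda)=J^\star$, hence $J_q(\mK)=J^\star$.

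\textbf{Stability.} After permuting the state to $(x,\xi,\eta)$ with $\eta$ the appended scalar state, the closed-loop matrix of $\mK\oplus\lambda$ is block diagonal:
\[
A_{\mathrm{cl},\mK\oplus\lambda}=\operatorname{diag}\bigl(A_{\mathrm{cl},\mK},\lambda\bigr).
\]
The appended state receives no input and feeds into neither the control nor the other states. Its eigenvalues are therefore $\operatorname{eig}(A_{\mathrm{cl},\mK})\cup\{\lambda\}$, which lie in the open left half-plane because $\mK\in\mathcal{C}_q$ and $\lambda<0$.

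\textbf{Non-minimality.} The augmented controller has
\[
A_{\mK\oplus\lambda}=\operatorname{diag}(A_\mK,\lambda),\qquad B_{\mK\oplus\lambda}=\begin{bmatrix}B_\mK\\0\end{bmatrix},\qquad C_{\mK\oplus\lambda}=\begin{bmatrix}C_\mK&0\end{bmatrix}.
\]
By the PBH test at the eigenvalue $\lambda$, the vector $e_{q+1}$ satisfies $e_{q+1}^\tr[A_{\mK\oplus\lambda}-\lambda I,\;B_{\mK\oplus\lambda}]=0$, so the pair is uncontrollable. Dually, $C_{\mK\oplus\lambda}e_{q+1}=0$ and $A_{\mK\oplus\lambda}e_{q+1}=\lambda e_{q+1}$ show it is unobservable. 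This holds even when $\lambda$ coincides with an eigenvalue of $A_\mK$.

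\textbf{Cost equality.} With the block structure above and the same permutation, we have
\[
B_{\mathrm{cl}}=\begin{bmatrix}B_{\mathrm{cl},\mK}\\0\end{bmatrix},\qquad C_{\mathrm{cl}}=\begin{bmatrix}C_{\mathrm{cl},\mK}&0\end{bmatrix}.
\]
The closed-loop transfer function is therefore unchanged, $\mathbf{T}_{zd,\mK\oplus\lambda}=\mathbf{T}_{zd,\mK}$, and their squared $\mathcal{H}_2$ norms agree.

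None of these steps is a genuine obstacle; all the substance lies in \Cref{theorem:nonminimal-local-minimum-global}. The only point requiring care is bookkeeping: the permutation of closed-loop coordinates must be a similarity transformation, which preserves both Hurwitz stability and the transfer function.
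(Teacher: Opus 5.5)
Your proposal is correct and follows the paper's proof: show that $\mK\oplus\lambda\in\mathcal{C}_{q+1}$ is uncontrollable and unobservable and has the same cost as $\mK$, then invoke Theorem~\ref{theorem:nonminimal-local-minimum-global} at order $q+1$. The permutation you mention is unnecessary, since the closed-loop state of $\mK\oplus\lambda$ is already ordered as $(x,\xi,\eta)$ and so $A_{\mathrm{cl},\mK\oplus\lambda}=\operatorname{diag}(A_{\mathrm{cl},\mK},\lambda)$ directly.
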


\begin{proof}
    It is easy to verify that $\mK\oplus\lambda \in \mathcal{C}_{q+1}$ whenever $\mK \in \mathcal{C}_q$ and $\lambda < 0$. Also, $\mK\oplus\lambda$ is uncontrollable and unobservable. If $\mK\oplus\lambda$ is a local minimum of $J_{q+1}$, \Cref{theorem:nonminimal-local-minimum-global} guarantees that $J_{q+1}(\mK\oplus\lambda) = J^\star$. We thus have 
    $
    J_q(\mK) = J_{q+1}(\mK\oplus\lambda) = J^\star 
    $ by construction.
\end{proof}

By combining \cite[Theorem 6]{tang2023analysis} with \Cref{theorem:nonminimal-local-minimum-global}, we derive the following theorem. 

\begin{theorem} \label{corollary:local-minimum-J-n}
Let $\mK\in\mathcal{C}_n$ be a local minimum of $J_n$, where we recall that $n$ is the state dimension of the plant~\cref{eq:plant}. Then $\mK$ is a globally optimal policy.
\end{theorem}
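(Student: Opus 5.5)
The proof is a case split on whether the local minimum $\mK\in\mathcal{C}_n$ is minimal. Two facts are used. First, since $J_n$ is a rational, hence smooth, function on the open set $\mathcal{C}_n\subseteq\mathcal{V}_n$, any local minimum is an interior stationary point, so $\nabla J_n(\mK)=0$. Second, \cref{theorem:seperation-controller} together with \cref{assuption:standard} gives $J^\star=\min_{\mK'\in\mathcal{C}_n}J_n(\mK')$. This identifies ``globally optimal'' in the sense of \cref{eq:LQG-control} with attaining this minimum.

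\emph{Case 1: $\mK$ is controllable and observable.} Then $\mK$ is a minimal stationary point of $J_n$. By \cite[Theorem 6]{tang2023analysis}, every stationary point of $J_n$ corresponding to a minimal full-order policy is globally optimal. Hence $J_n(\mK)=\min_{\mathcal{C}_n}J_n=J^\star$. The only care needed here is that \cite{tang2023analysis} works under the same standing assumptions as \cref{assuption:standard} and the same parameterization $\mathcal{V}_n$. I would check that their notion of global optimality (over $\mathcal{C}_n$) coincides with ours via \cref{theorem:seperation-controller}.

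\emph{Case 2: $\mK$ is uncontrollable or unobservable.} Here \cref{theorem:nonminimal-local-minimum-global} applies directly with $q=n$. Its hypothesis $\mathcal{C}_n\neq\varnothing$ holds because the separation controller \cref{eq:separation-controller} lies in $\mathcal{C}_n$ under \cref{assuption:standard}. The theorem yields $J_n(\mK)=J^\star$.

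The two cases are exhaustive, so every local minimum of $J_n$ is globally optimal. Essentially all the difficulty is contained in \cref{theorem:nonminimal-local-minimum-global}, which is assumed here. The only step needing attention in this argument is the bookkeeping in Case 1: a local minimum is indeed a stationary point in the sense used by \cite{tang2023analysis}, which follows from openness of $\mathcal{C}_n$ and differentiability of $J_n$ there.
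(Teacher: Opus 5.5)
Your proposal is correct and follows the paper's proof: the same case split, with \cite[Theorem 6]{tang2023analysis} for the controllable-and-observable case and \cref{theorem:nonminimal-local-minimum-global} (with $q=n$) for the non-minimal case. Your extra bookkeeping is accurate and only makes explicit what the paper leaves implicit: a local minimum on the open set $\mathcal{C}_n$ is a stationary point, $\mathcal{C}_n\neq\varnothing$, and $\min_{\mathcal{C}_n}J_n=J^\star$.
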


\begin{proof}
    Let $\mK\in\mathcal{C}_n$ be a local minimum of $J_n$. If $\mK$ is a controllable and observable policy, then \cite[Theorem 6]{tang2023analysis} guarantees that it must be globally optimal. If instead $\mK$ is uncontrollable or unobservable, the result follows directly from  \Cref{theorem:nonminimal-local-minimum-global}. 
\end{proof}

Note that \Cref{theorem:nonminimal-local-minimum-global,corollary:local-minimum-global} certify the global optimality of a special class of local minima $\mK \in \mathcal{C}_q$ that corresponds to uncontrollable or unobservable policies without knowing the system state dimension $n$. Instead, \Cref{corollary:local-minimum-J-n} uses the state dimension $n$ to remove the explicit assumption of controllability or observability. We present two examples to illustrate these results. 

\vspace{-1mm}

\begin{example} \label{example:LQG-1}
    Consider the second-order LQG instance with problem data
    \begin{equation}
\label{eq:LQG-example-data}
    A=\begin{bmatrix}0&1\\-1&0\end{bmatrix},
    \qquad
    B=\begin{bmatrix}1\\0\end{bmatrix},
    \qquad
    C=\begin{bmatrix}1&0\end{bmatrix},
    \qquad
    W=Q=I_2,
    \qquad
    V=R=1.
\end{equation}
We restrict attention to the class of first-order dynamical policies of the form 
\begin{equation} \label{eq:first-order-policy}
    \dot\xi(t)=a\xi(t)+b y(t), \,  u=c\xi(t), 
\end{equation}
where $a,b,c \in \mathbb{R}$. From \cref{eq:closed-loop-matrices}, 
the closed-loop characteristic polynomial is $p(\lambda)=\det(\lambda I-A_{\mathrm{cl},\mK}) = \lambda^3 - a \lambda^2 + (1-bc)\lambda - a$. By the Routh--Hurwitz criterion, the closed-loop system is stable if
and only if $a < 0, bc<0$. Thus,  the stabilizing region  \(\mathcal{C}_1\)  is convex and hence path-connected in the reduced coordinates \((a,bc)\).  In the original coordinates
\((a,b,c)\), \(\mathcal{C}_1\) has two path-connected
components, corresponding to \(b>0,c<0\) and \(b<0,c>0\). 
The LQG cost can be explicitly computed as 
$$
J_1(\mK) =  \frac{4a^2 + 4 + 5b^2c^2 -b^3c^3}{2abc}, \qquad \forall a < 0, bc<0. 
$$
A direct calculation shows that \(J_1\) admits the stationary point $\mK_1 = \begin{bmatrix}
      0 & -2\\1 & -2\sqrt{2}
  \end{bmatrix}$. 
  In fact, \(\mK_1\) is globally optimal over \(\mathcal{C}_1\), and is
therefore a local minimizer within the class of first-order policies.
Moreover, its controller realization is minimal, and the corresponding LQG cost is  $J_1(\mK_1)=4\sqrt{2}$. The cost landscape over \(\mathcal{C}_1\) is illustrated in \Cref{fig:LQG-1-cost}. By contrast, the globally optimal second-order policy given by \cref{theorem:seperation-controller} achieves
$
J^\star
=10(\sqrt{2}-1)\sqrt{2\sqrt{2}-1}
<4\sqrt{2}.
$ 
Thus, \(\mK_1\) is globally optimal within \(\mathcal{C}_1\), but is strictly suboptimal among dynamic policies of arbitrary order for LQG~\cref{eq:LQG-control}. 
This example shows that the uncontrollability or unobservability condition in
\cref{theorem:nonminimal-local-minimum-global} is essential. For any \(\lambda<0\),
\Cref{corollary:local-minimum-global} implies that the augmented policy
$
\mK_1\oplus\lambda\in\mathcal{C}_2
$ 
cannot be a local minimum over \(\mathcal{C}_2\). Thus, there exists a local perturbation on $\mK_1\oplus\lambda$ within \(\mathcal{C}_2\) that strictly decreases the LQG~cost.
\hfill $\square$
\end{example}

\begin{figure}[t]
    \centering
    \begin{subfigure}[t]{0.42\linewidth}
        \centering
        \includegraphics[width=\linewidth]{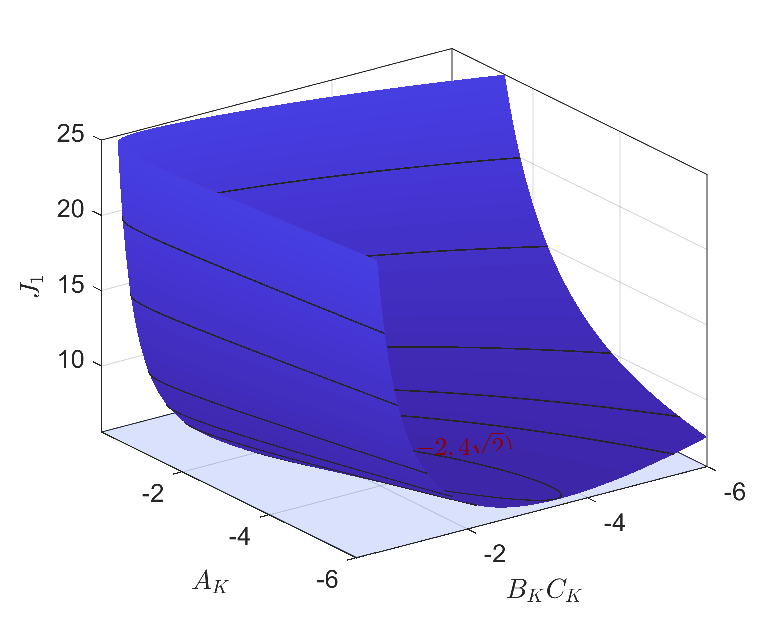}
        \caption{LQG cost in \cref{example:LQG-1}.}
        \label{fig:LQG-1-cost}
    \end{subfigure}
    \hspace{10mm}
    \begin{subfigure}[t]{0.36\linewidth}
        \centering
        \includegraphics[width=\linewidth]{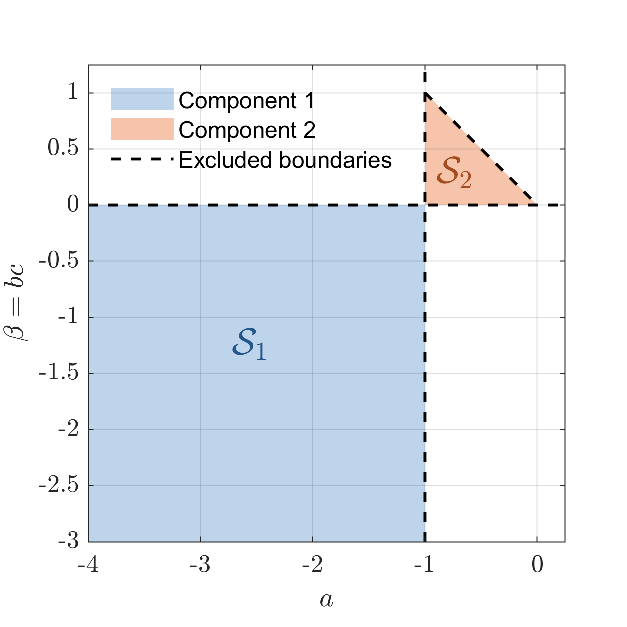}
        \caption{Feasible region $\mathcal{C}_1$ in
        \cref{example:LQG-2}.}
        \label{fig:LQG-2-region}
    \end{subfigure}
    \vspace{-1mm}
    \caption{Illustrations for \cref{example:LQG-1,example:LQG-2}.}
    \label{fig:LQG-examples}
\end{figure}

\vspace{-1mm}
\begin{example} \label{example:LQG-2}
Consider another second-order LQG instance obtained from
\cref{eq:LQG-example-data} by replacing the input matrix with
$
B=\begin{bmatrix}1&1\end{bmatrix}^\tr.
$ 
For the class of first-order dynamic policies in
\cref{eq:first-order-policy}, the closed-loop characteristic polynomial
is
$
p(\lambda) =\det\!\left(\lambda I-A_{\mathrm{cl},\mK}\right) =\lambda^3-a\lambda^2+(1-bc)\lambda-(a+bc).
$ 
The Routh--Hurwitz criterion gives the following characterization of the stabilizing region:
\[
\mathcal{C}_1
=
\left\{(a,b,c)\in\mathbb{R}^3:
a<-1,\ bc<0\right\}
\cup
\left\{(a,b,c)\in\mathbb{R}^3:
-1<a<0,\ 0<bc<-a\right\}.
\]
Thus, the stabilizing region $\mathcal{C}_1$ has two connected components in the
reduced coordinates $(a,\beta = bc)$; see \cref{fig:LQG-2-region} for illustration. In the original coordinates \((a,b,c)\), each of the two components further splits according to the signs of \(b\) and \(c\), and indeed, \(\mathcal{C}_1\) has four path-connected components. 

Since \(bc\neq0\) over \(\mathcal{C}_1\), every stabilizing first-order policy must be controllable and observable (i.e., minimal) in \(\mathcal{C}_1\). The corresponding closed-loop realization is also minimal, and consequently, the LQG cost diverges whenever a policy approaches the boundary of $\mathcal{C}_1$ \cite[Theorem~2]{zheng2026benignI}. Indeed, \(J_1\) attains a minimum in the interior of each reduced component. Solving the corresponding first-order optimality conditions
numerically gives
$
(a_-^\star,\beta_-^\star)
\approx(-7.22,-4.10),
J_1(a_-^\star,\beta_-^\star)
\approx7.44$, and $
(a_+^\star,\beta_+^\star)
\approx(-0.30,0.22),
J_1(a_+^\star,\beta_+^\star)
\approx17.97.
$ 
The Hessian is positive definite at both stationary points. Hence, each point is a strict local minimizer in the reduced coordinates $(a,\beta = bc)$. 
It is clear that the policy \(\mK_{1,+}\) corresponding to $(a_+^\star,\beta_+^\star)$ is a spurious local minimizer of \(J_1\). This example shows that there might be spurious local minima within the set of low-order policies even when every feasible controller realization is controllable and observable. 
\hfill $\square$
\end{example}

\subsection{Suboptimal stationary points as strict saddles}

We next study the second-order geometry of suboptimal stationary points in the full-order LQG cost $J_n$. Although these points cannot be local minima of $J_n$, their Hessians may be degenerate and fail to reveal a descent direction; see \cite[Example 5]{tang2023analysis} and \cite[Example 2]{zheng2022escaping}. We prove that a suitable change of controller realization results in an indefinite Hessian with probability one. Consequently, every stationary point of $J_n$ is either globally optimal or can be converted into a strict saddle with probability one.

Here, a stationary policy $\mK \in \mathcal{C}_q$ is called a \emph{strict saddle} if its Hessian is indefinite. Equivalently, for a strict saddle $\mK\in\mathcal{C}_q$, there exist two directions $\Delta_-,\Delta_+ \in \mathcal{V}_q$ such that 
\[ 
D^2 J_q(\mK)[\Delta_-,\Delta_-] < 0 < D^2 J_q(\mK)[\Delta_+,\Delta_+]. 
\] 
Our main result applies to suboptimal policies of arbitrary order.

\begin{theorem} \label{theorem:hessian-negative-direction}
Let $q\in\mathbb{Z}_{\geq 0}$ satisfy $\mathcal{C}_q\neq\varnothing$,
and suppose $\mK\in\mathcal{C}_q$ is not a globally optimal policy. Then for any $p\geq 1$,  the set
\begin{equation}
\label{eq:augmented_scalar_negative_direction}
\setv*{\lambda\in(-\infty,0)}{\begin{aligned} & D^2 J_{q+p}(\mK\oplus\lambda I_p)[\Delta_{-},\Delta_{-}]< 0 < D^2 J_{q+p}(\mK\oplus\lambda I_p)[\Delta_+,\Delta_+] \\ & \text{ for some }\Delta_-,\Delta_{+}\in \mathcal{V}_{q+p} \qquad \end{aligned}}
\end{equation}
has full Lebesgue measure in $(-\infty,0)$.
\end{theorem}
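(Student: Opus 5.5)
The plan is to use the two tools announced in the introduction: the frequency-domain optimality characterization (\Cref{theorem:global_optimality_RK}: $\mK$ is globally optimal iff $\mathbf{R}_{\mK}(s)\equiv 0$) and the Hessian identity of \Cref{lemma:hessian_augmented_scalar},
\[
D^2J_{q+p}(\mK\oplus\lambda I_p)[\Delta,\Delta]=4\operatorname{tr}\!\left(\Delta_{21}\mathbf{R}_{\mK}(-\lambda)^\top\Delta_{12}\right),
\]
valid for the structured directions $\Delta\in\mathcal{V}_{q+p}$ that perturb only the coupling blocks between the appended modes and the controller input/output. Here $\Delta_{12}\in\mathbb{R}^{m\times p}$ perturbs the output matrix in the new columns, and $\Delta_{21}\in\mathbb{R}^{p\times\ell}$ perturbs the input matrix in the new rows; all other blocks are zero. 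First we check that $\mK\oplus\lambda I_p\in\mathcal{C}_{q+p}$ for every $\lambda<0$. This holds because the closed-loop matrix is block triangular up to permutation, with the extra diagonal block $\lambda I_p$.

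Since $\mK$ is not globally optimal, \Cref{theorem:global_optimality_RK} gives $\mathbf{R}_{\mK}\not\equiv 0$. The matrix $\mathbf{R}_{\mK}(s)$ is a real-rational transfer matrix built from the stable closed-loop data, so each entry is analytic on the open right half-plane, and in particular on $(0,\infty)$. If some entry were zero on a subset of $(0,\infty)$ of positive measure, that subset would have an accumulation point, and the identity theorem would force the entry to vanish identically. Some entry $[\mathbf{R}_{\mK}]_{ij}$ is not identically zero, so its zero set on $(0,\infty)$ is discrete, hence countable and of measure zero. Mapping $s=-\lambda$, the set $N:=\{\lambda<0:\mathbf{R}_{\mK}(-\lambda)=0\}$ has Lebesgue measure zero. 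One care point: the rational entries have no poles in the closed right half-plane, because they arise from stable realizations and $-\lambda>0$.

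Now fix $\lambda\notin N$ and set $M:=\mathbf{R}_{\mK}(-\lambda)\in\mathbb{R}^{m\times\ell}$ with $M\neq 0$. Pick indices with $M_{ij}\neq 0$. Take $\Delta_{21}=t\,e_1e_j^\top$ and $\Delta_{12}=e_ie_1^\top$, using the first appended mode. Then
\[
\operatorname{tr}(\Delta_{21}M^\top\Delta_{12})=t\,(M^\top)_{ji}=t\,M_{ij}.
\]
Choosing $t=\pm\operatorname{sign}(M_{ij})$ yields directions $\Delta_\pm$ with curvature $\pm4|M_{ij}|$. So every $\lambda\in(-\infty,0)\setminus N$ lies in the set \cref{eq:augmented_scalar_negative_direction}, which therefore has full measure. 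This works for any $p\ge1$, and also for $q=0$, with $\mK=0$ and the same identity.

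The main obstacle is the Hessian identity itself, which I take from \Cref{lemma:hessian_augmented_scalar}. Its derivation would go as follows. For the structured direction, the cross terms vanish at first order because the appended block is decoupled. The second-order term is the product of the two first-order couplings, and evaluating it through Lyapunov equations with $\lambda I_p$ on the diagonal turns the coupled Sylvester solutions into resolvents at $s=-\lambda$. This is why $\mathbf{R}_{\mK}$ appears evaluated at $-\lambda$. Given the lemma, the only delicate point left is the analyticity and zero-set argument, together with confirming that $\mathbf{R}_{\mK}$ is finite on the positive real axis.
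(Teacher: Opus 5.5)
\textbf{There is a genuine gap, caused by a misreading of the structured directions.} Your overall route is the paper's own: non-optimality gives $\mathbf{R}_{\mK}\not\equiv 0$; a nonzero rational entry has only a null zero set on $(0,\infty)$; and flipping the sign of one coupling block produces both curvature signs. The paper packages that sign flip as \Cref{corollary:vanishing_hessian_augmented}, and your analyticity argument is equivalent to its numerator-polynomial argument.

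The problem is the size of the blocks. In \Cref{lemma:hessian_augmented_scalar} the blocks are $\Delta_{12}\in\mathbb{R}^{(m+q)\times p}$ and $\Delta_{21}\in\mathbb{R}^{p\times(\ell+q)}$. Besides the new columns of $C$ and new rows of $B$, they contain the $A$-matrix couplings: from the appended states into $\xi$, and from $\xi$ into the appended states. Correspondingly, $\mathbf{R}_{\mK}(s)$ is $(m+q)\times(\ell+q)$, not $m\times\ell$.

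With your restriction (``all other blocks are zero''), the identity only reaches the top-left $m\times\ell$ block of $\mathbf{R}_{\mK}(-\lambda)$. But \Cref{theorem:global_optimality_RK} only guarantees that \emph{some} entry of the full matrix is nonzero. That block vanishing identically only says that $\mK$ cannot be improved by adding a stable system from $y$ to $u$ in parallel with the controller, which is the $(u,y)$ block of the centered Youla parameter. This is in general strictly weaker than global optimality: such additive perturbations can never remove unstable poles of the controller, since they would survive as hidden unstable modes. So your step ``pick $M_{ij}\neq 0$'' can fail for $q\geq 1$. For $q=0$ your directions already span all of $\mathcal{M}_{0,p}$, and the argument is fine.

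The repair is immediate. Choose $e_i\in\mathbb{R}^{m+q}$ and $e_j\in\mathbb{R}^{\ell+q}$ indexing a nonzero entry of the full matrix $\mathbf{R}_{\mK}$. Set $\Delta_{12}=e_ie_1^\top$ and $\Delta_{21}=\pm e_1e_j^\top$; these use the $A$-couplings when $i>m$ or $j>\ell$. Your trace computation and measure-zero argument then go through verbatim and prove the statement.
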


The proof is postponed to \Cref{section:global-optimality}. Its key idea is to connect the Hessian at $\mK\oplus\lambda I_p$ to a frequency-domain optimality residual $\mathbf{R}_{\mK}(s)$. The suboptimality of the policy $\mK$ implies $\mathbf{R}_{\mK}(s)\not\equiv0$, and so rationality guarantees $\mathbf{R}_{\mK}(-\lambda)\neq0$ for almost every $\lambda<0$. A bilinear Hessian identity then gives directions of both positive and negative curvature. \Cref{theorem:hessian-negative-direction} implies that appending a single decoupled stable state exposes directions with different signs of curvature for almost every choice of its eigenvalue, and this augmented policy preserves the controller input-output behavior and the LQG cost.

To apply this result to stationary points, we first note that the policy augmentation~\cref{eq:augmented-policy} also preserves stationarity. More generally, for any $\mK \in \mathcal{C}_q$ and any Hurwitz matrix $\Lambda \in \mathbb{R}^{p \times p}$, we have 
\begin{equation} \label{eq:augmentation-first-derivative} 
D J_{q+p}(\mK \oplus \Lambda) \left[ \begin{pmatrix} \Delta_{11} & \Delta_{12} \\ \Delta_{21} & \Delta_{22} \end{pmatrix} \right] = D J_q(\mK)[\Delta_{11}], 
\end{equation} 
where $\Delta_{11} \in \mathcal{V}_q$ and the other blocks have compatible dimensions. Indeed, perturbing the original controller block reproduces the first-order variation of $J_q$. At the decoupled realization, perturbing either coupling block alone or the appended state matrix does not change the controller transfer matrix, and thus does not contribute to the first-order variation of $J_{q+p}$. Linearity of the directional derivative then gives \cref{eq:augmentation-first-derivative}. Thus, if $\nabla J_q(\mK)=0$, then $\nabla J_{q+p}(\mK \oplus \Lambda)=0$ (also see \cite[Theorem 4]{tang2023analysis}). 

Together with \cref{theorem:hessian-negative-direction}, this shows that any suboptimal stationary policy of order $q$ becomes a strict saddle of $J_{q+p}$ after augmentation by $\lambda I_p$ for almost every $\lambda<0$. For the full-order LQG problem, the conversion can be performed without increasing the controller order since every suboptimal stationary point of $J_n$ is non-minimal and its redundant states can be replaced by decoupled stable modes. We have the following corollary. 

\begin{corollary} \label{corollary:stationary-point-in-LQG-control}
Let $\mK \in \mathcal{C}_n$ satisfy $\nabla J_n(\mK)=0$ and $J_n(\mK)>J^\star$. Let $\mK_{\min}$ be a minimal realization of this policy, and let $r$ denote its order. Then $r<n$, and, for almost every $\lambda<0$, the policy 
\[ \widehat{\mK}_{\lambda} \coloneq \mK_{\min} \oplus (\lambda I_{n-r}) 
\] is a strict saddle of $J_n$. Moreover, it satisfies $J_n(\widehat{\mK}_{\lambda}) = J_n(\mK).$
\end{corollary}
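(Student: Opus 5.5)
The plan is to combine \cite[Theorem 6]{tang2023analysis}, the invariance of the LQG cost under realization changes, the stationarity-preservation identity \cref{eq:augmentation-first-derivative}, and \Cref{theorem:hessian-negative-direction}, applied with $q=r$ and $p=n-r$.

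\textbf{Step 1: $r<n$.} The minimal order satisfies $r\le n$. If $r=n$, then $\mK$ itself is a minimal (controllable and observable) realization in $\mathcal{C}_n$. Since $\nabla J_n(\mK)=0$, \cite[Theorem 6]{tang2023analysis} makes $\mK$ globally optimal, which contradicts $J_n(\mK)>J^\star$. Hence $r<n$ and $p\coloneq n-r\ge 1$.

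\textbf{Step 2: $\mK_{\min}$ is stabilizing, has the same cost, and is stationary.} Apply the Kalman canonical decomposition to $\mK$ via a similarity transformation $T\in\mathrm{GL}_n$. The closed-loop matrix $A_{\mathrm{cl},\mK}$ then becomes block triangular, up to permutation. Its spectrum is the union of $\operatorname{eig}(A_{\mathrm{cl},\mK_{\min}})$ and the eigenvalues of the uncontrollable or unobservable controller blocks. Hence $A_{\mathrm{cl},\mK_{\min}}$ is Hurwitz, so $\mK_{\min}\in\mathcal{C}_r$, and the hidden blocks are Hurwitz as well. The closed-loop transfer function depends only on the controller transfer matrix, so
\[
J_r(\mK_{\min})=J_n(\mK)=J_n(\widehat{\mK}_\lambda).
\]
The last equality holds because $\widehat{\mK}_\lambda\in\mathcal{C}_n$ for $\lambda<0$ and has the same input-output map as $\mK_{\min}$. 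This gives the "moreover" claim. It also shows $\mK_{\min}$ is suboptimal, since $J_r(\mK_{\min})>J^\star$.

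\textbf{Step 3: stationarity of $\widehat{\mK}_\lambda$ (main obstacle).} This is the delicate step, because $\mK$ and $\widehat{\mK}_\lambda$ are different realizations: their hidden modes differ. My approach is to show first that $\nabla J_r(\mK_{\min})=0$, and then invoke \cref{eq:augmentation-first-derivative} with $\Lambda=\lambda I_p$ to get $\nabla J_n(\widehat{\mK}_\lambda)=0$.

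To show $\nabla J_r(\mK_{\min})=0$, take any direction $\delta\in\mathcal{V}_r$. I embed it into the canonical-form realization $T\mK T^{-1}$ as a perturbation of only the minimal block. Under this perturbation, the hidden parts stay decoupled in the same triangular way, so the controller transfer matrix of the perturbed $n$-th order policy equals that of $\mK_{\min}+t\delta$. The costs therefore coincide along the curve. Similarity invariance of $J_n$, via the chain rule through the linear map $\mK\mapsto T\mK T^{-1}$, gives $\nabla J_n(T\mK T^{-1})=0$. Hence
\[
DJ_r(\mK_{\min})[\delta]=DJ_n(T\mK T^{-1})[\iota(\delta)]=0,
\]
where $\iota$ denotes the embedding.

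The point to check carefully is that, in the Kalman form, perturbing the controllable-and-observable block preserves the triangular structure. This holds because the coupling entries to uncontrollable states are untouched. The perturbation keeps $B$-rows of uncontrollable states zero and $C$-columns of unobservable states zero, so the transfer matrix reduces to that of the perturbed minimal block.

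\textbf{Step 4: strict saddle.} Apply \Cref{theorem:hessian-negative-direction} with $q=r$, $p=n-r\ge1$, and the suboptimal policy $\mK_{\min}\in\mathcal{C}_r$. For almost every $\lambda<0$, the Hessian of $J_n$ at $\mK_{\min}\oplus\lambda I_{n-r}=\widehat{\mK}_\lambda$ has directions of both strictly negative and strictly positive curvature. Combined with the stationarity from Step 3, this means $\widehat{\mK}_\lambda$ is a strict saddle of $J_n$ for almost every $\lambda<0$.
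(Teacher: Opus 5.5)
Your proposal is correct and follows essentially the same route as the paper. Both arguments rule out $r=n$ via \cite[Theorem 6]{tang2023analysis}, transfer stationarity from $\mK_{\min}$ to $\widehat{\mK}_\lambda$ through \cref{eq:augmentation-first-derivative}, and apply \Cref{theorem:hessian-negative-direction} with $q=r$ and $p=n-r$. The only difference is in establishing $\nabla J_r(\mK_{\min})=0$: the paper cites \cite[Theorem 1]{zheng2022escaping}, while your Step 3 proves it directly by perturbing only the minimal block of the Kalman canonical form and using similarity invariance, which is a valid self-contained argument.
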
 

\begin{proof} Since every controllable and observable stationary point of $J_n$ is globally optimal \cite{tang2023analysis}, the policy $\mK$ must be non-minimal. Hence $r<n$. It is known that $\mK_{\min} \in \mathcal{C}_r$ remains a stationary point of $J_r$ \cite[Theorem 1]{zheng2022escaping}.

For every $\lambda<0$, the policy $\widehat{\mK}_{\lambda}$ belongs to $\mathcal{C}_n$ and has the same controller transfer matrix and cost as $\mK$. By \cref{eq:augmentation-first-derivative}, it is also stationary. Applying \cref{theorem:hessian-negative-direction} to $\mK_{\min}$ shows that, for almost every $\lambda<0$, there exist two directions $\Delta_-,\Delta_+ \in \mathcal{V}_{n}$ such that 
\[
D^2 J_{n}(\mK_{\min}\oplus\lambda I_{n-r})[\Delta_-,\Delta_-] < 0 < D^2 J_{n}(\mK_{\min}\oplus\lambda I_{n-r})[\Delta_+,\Delta_+]. 
\]
Therefore, $\mK_{\min}\oplus\lambda I_{n-r}$ is a strict saddle of $J_n$ for almost every $\lambda<0$. 
\end{proof} 

This result implies that every suboptimal stationary point of $J_n$ can be converted into a strict saddle of $J_n$ with probability one. This can be achieved by simply sampling $\lambda$ from any probability distribution on $(-\infty,0)$ that is absolutely continuous with respect to Lebesgue measure. 
The proof in \cref{corollary:stationary-point-in-LQG-control} consists of taking a minimal realization and then restoring the controller order using decoupled stable states. This structured transformation preserves the controller input-output behavior and the cost, while creating a direction of negative curvature.
This result also applies even when the Hessian at the original stationary point vanishes, generalizing \cite[Theorem 2]{zheng2022escaping}.

Finally, we revisit \Cref{example:LQG-1,example:LQG-2} to illustrate how appending a single stable controller state reveals negative curvature of the LQG cost. 

\begin{example} \label{example:LQG-3}
For a first-order policy with $b=1$ in \cref{eq:first-order-policy}, consider the perturbed second-order policy
\begin{equation} \label{eq:augmented-policy-example}
\widehat{\mK}_{\lambda}(\eta,\zeta)
=\left[
\begin{array}{c:cc}
0 &c& \zeta\\ \hdashline
1 & a & 0 \\
\eta & 0 & \lambda
\end{array}
\right], 
\qquad \lambda<0,
\end{equation}
where $\eta$ and $\zeta$ specify the input and output couplings of the additional controller state. At~$(\eta,\zeta) = (0,0)$, this additional state is decoupled, and $\widehat{\mK}_{\lambda}(0,0)=\mK\oplus\lambda$ preserves the controller input-output map and the LQG cost. For $\lambda=-1$, we visualize the cost difference
$
\Delta J_2(\eta,\zeta)
\coloneq
J_2\bigl(\widehat{\mK}_{-1}(\eta,\zeta)\bigr)
-
J_2\bigl(\widehat{\mK}_{-1}(0,0)\bigr).
$ 
We also vary $\lambda$ and compute the smallest eigenvalue of the full Hessian $D^2J_2(\mK\oplus\lambda)$.

Recall that the first-order policy $\mK_1$ in \cref{example:LQG-1}, with $(a,c)=(-2\sqrt{2},-2)$, globally minimizes $J_1$ over $\mathcal{C}_1$, but satisfies $J_1(\mK_1)=4\sqrt{2}>J^\star$. The left panel of \Cref{fig:example-1-augmentation} shows that, after appending the stable pole $\lambda=-1$ as in \Cref{eq:augmented-policy-example}, the origin becomes a saddle point of $J_2$. The LQG cost decreases along $\zeta=\eta$ and increases along $\zeta=-\eta$. All policies in the plotted region $|\eta|,|\zeta|\leq0.1$ are internally stabilizing. The right panel further shows that the smallest eigenvalue of $D^2J_2(\mK_1\oplus\lambda)$ remains strictly negative throughout $\lambda\in[-10,-0.01]$. Thus, the additional controller state exposes negative curvature at a policy that is globally optimal within the first-order policy class.

Next, consider the suboptimal first-order local minimum $\mK_{1,+}$ in \Cref{example:LQG-2}.
With $b=1$, its parameters are
$
(a,c)\approx(-0.30,\,0.22), \, J_1(\mK_{1,+})\approx17.97.
$ 
As shown in the left panel of
\Cref{fig:example-2-augmentation}, appending the stable pole $\lambda=-1$ as in \Cref{eq:augmented-policy-example}  again creates a saddle point. In this case, the LQG cost decreases along $\zeta=-\eta$ and increases along $\zeta=\eta$, while all policies in the plotted region remain internally stabilizing. The right panel shows that the smallest eigenvalue of $D^2J_2(\mK_{1,+}\oplus\lambda)$ is strictly negative throughout the same range of stable poles. Hence, the spurious local minimum within $\mathcal{C}_1$ becomes
a strict saddle in $\mathcal{C}_2$ after introducing one additional controller state. \hfill $\square$
\end{example}

\begin{figure}[t]
\setlength{\abovecaptionskip}{0pt}
    \centering
    \includegraphics[width=0.36\textwidth]{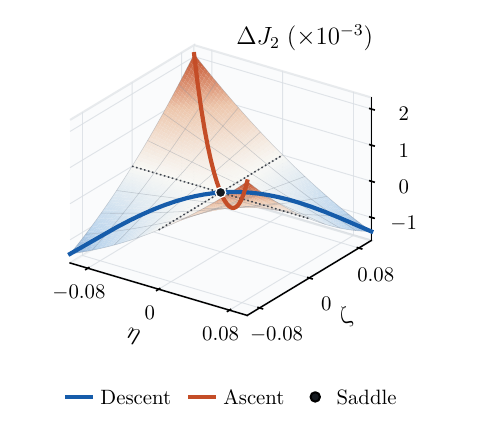}
    \hspace{10mm}
    \includegraphics[width=0.36\textwidth]{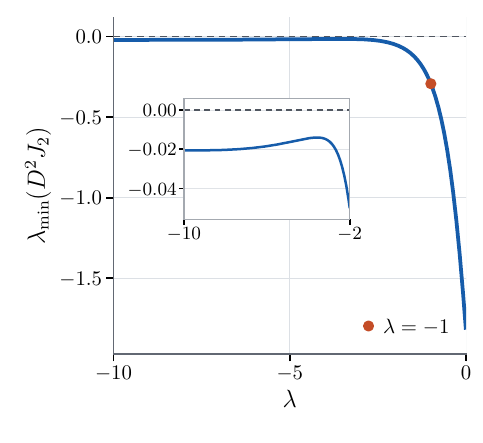}
    \caption{Augmentation of the first-order policy $\mK_1$ in \cref{example:LQG-1}. 
    Left: the LQG cost difference $\Delta J_2(\eta,\zeta)$ for
    $\lambda=-1$. The blue and orange curves show descent and ascent directions, respectively. 
    Right: the smallest eigenvalue of the full Hessian as the appended stable pole varies.}
    \label{fig:example-1-augmentation}
\end{figure}

\begin{figure}[t]
\setlength{\abovecaptionskip}{0pt}
    \centering
    \includegraphics[width=0.36\textwidth]{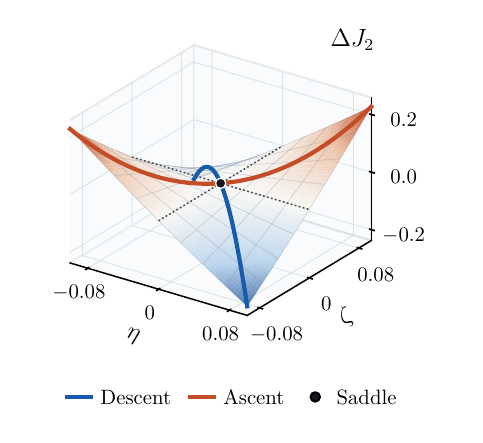}
    \hspace{10mm}
    \includegraphics[width=0.36\textwidth]{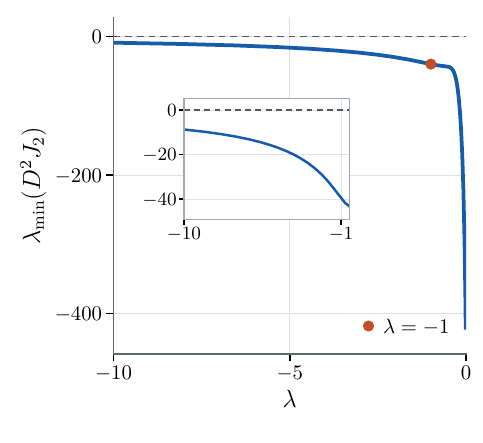}
    \caption{Augmentation of the suboptimal first-order local minimum $\mK_{1,+}$ in \cref{example:LQG-2}. Left: the LQG cost difference $\Delta J_2(\eta,\zeta)$ for
    $\lambda=-1$. The blue and orange curves show descent and ascent directions,
    respectively. Right: the smallest eigenvalue of the full Hessian as the
    appended stable pole varies.}
    \label{fig:example-2-augmentation}
\end{figure}

\section{Technical Proofs of the Main Results} \label{section:proofs}

In this section, we present the technical proofs for \cref{theorem:nonminimal-local-minimum-global,theorem:hessian-negative-direction}. Our key strategy is first to convert any local minima into their Kalman canonical form (\Cref{section:local-minima-Canonical-form}), then to characterize the Hessian of local minima in the augmented form (\Cref{section:Hessian}), and finally to connect the Hessian with the global optimality characterization in the frequency domain (\Cref{section:global-optimality}). 

\subsection{Local minima in the Kalman canonical form} \label{section:local-minima-Canonical-form}

We first note the following lemma regarding similarity transformations of local minima.
\begin{lemma}
\label{lemma:local_minimum_similarity_trans}
Let $q\in\mathbb{Z}_{>0}$ satisfy $\mathcal{C}_q\neq\varnothing$, 
and suppose $\mK=\begin{bmatrix}
0 & C_{\mK} \\ B_{\mK} & A_{\mK}
\end{bmatrix}\in\mathcal{C}_q$ is a local minimum of $J_q$. Then for any $T\in\mathrm{GL}_q$, the policy
\[
\begin{bmatrix}
0 & C_{\mK}T^{-1} \\ TB_{\mK} & TA_{\mK}T^{-1}
\end{bmatrix}
\]
is a local minimum of $J_q$.
\end{lemma}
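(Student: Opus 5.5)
The plan is to show that the similarity transformation induces a homeomorphism of $\mathcal{C}_q$ onto itself that leaves $J_q$ invariant. A local minimum is then mapped to a local minimum.

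Fix $T\in\mathrm{GL}_q$ and define the map $\mathscr{T}_T:\mathcal{V}_q\to\mathcal{V}_q$ by
\[
\mathscr{T}_T\!\left(\begin{bmatrix} 0 & C_{\mK} \\ B_{\mK} & A_{\mK}\end{bmatrix}\right)=\begin{bmatrix} 0 & C_{\mK}T^{-1} \\ TB_{\mK} & TA_{\mK}T^{-1}\end{bmatrix}.
\]
This map is linear, hence continuous, and it is invertible with inverse $\mathscr{T}_{T^{-1}}$, which is also continuous.

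The first step is to check that $\mathscr{T}_T(\mathcal{C}_q)=\mathcal{C}_q$. A direct computation from \cref{eq:closed-loop-matrices} gives
\[
A_{\mathrm{cl},\mathscr{T}_T(\mK)}=\widetilde{T}A_{\mathrm{cl},\mK}\widetilde{T}^{-1},\qquad \widetilde{T}=\operatorname{diag}(I_n,T).
\]
The two closed-loop matrices therefore have the same eigenvalues, so $\mathscr{T}_T$ maps $\mathcal{C}_q$ into itself. Applying the same argument to $T^{-1}$ shows that the map is onto $\mathcal{C}_q$.

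The second step is to check cost invariance. The same computation shows $B_{\mathrm{cl},\mathscr{T}_T(\mK)}=\widetilde{T}B_{\mathrm{cl},\mK}$ and $C_{\mathrm{cl},\mathscr{T}_T(\mK)}=C_{\mathrm{cl},\mK}\widetilde{T}^{-1}$. Hence the closed-loop transfer matrix $\mathbf{T}_{zd,\mK}$ is unchanged, and $J_q(\mathscr{T}_T(\mK))=J_q(\mK)$ for every $\mK\in\mathcal{C}_q$.

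The final step transfers the local-minimum property. Since $\mK$ is a local minimum, there is a neighborhood $U\subseteq\mathcal{C}_q$ of $\mK$ on which $J_q\ge J_q(\mK)$. Because $\mathscr{T}_T$ is a homeomorphism of $\mathcal{C}_q$, the set $\mathscr{T}_T(U)$ is a neighborhood of $\mathscr{T}_T(\mK)$. For any $\mK'\in\mathscr{T}_T(U)$, write $\mK'=\mathscr{T}_T(\mK'')$ with $\mK''\in U$. Then
\[
J_q(\mK')=J_q(\mK'')\ge J_q(\mK)=J_q(\mathscr{T}_T(\mK)),
\]
so $\mathscr{T}_T(\mK)$ is a local minimum. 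There is no real obstacle; the only care needed is the block bookkeeping with $\widetilde{T}$ in the closed-loop matrices.
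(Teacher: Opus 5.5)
Your proposal is correct and follows essentially the same route as the paper: the similarity map is a bijection of $\mathcal{C}_q$ with continuous (indeed smooth) inverse, it leaves $J_q$ invariant, and so the image of a neighborhood on which $\mK$ is minimal is a neighborhood on which the transformed policy is minimal. Your explicit check via $\widetilde{T}=\operatorname{diag}(I_n,T)$ spells out the stability and cost-invariance facts that the paper uses without derivation.
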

\begin{proof}
Pick an arbitrary $T\in\mathrm{GL}_q$, and denote
\[
\mathscr{S}_T(\mK')\coloneq
\begin{bmatrix}
0 & C_{\mK'}T^{-1} \\ TB_{\mK'} & TA_{\mK'}T^{-1}
\end{bmatrix}
\qquad \text{for each}\ \ 
\mK'=\begin{bmatrix}
0 & C_{\mK'} \\ B_{\mK'} & A_{\mK'}
\end{bmatrix}\in\mathcal{C}_q.
\]
Let $\mathcal{U}$ be an open neighborhood of $\mK$ in $\mathcal{C}_q$ such that $J_q(\mK)\leq J_q(\mK')$ for all $\mK'\in\mathcal{U}$, 
and let $\mathscr{S}_T(\mathcal{U})\coloneq\setc*{\mathscr{S}_T(\mK')}{\mK'\in\mathcal{U}}$.
Since the mapping $\mathscr{S}_T$ is a bijection on $\mathcal{C}_q$, and both $\mathscr{S}_T$ and its inverse are $C^\infty$, we see that $\mathscr{S}_T(\mathcal{U})$ is an open neighborhood of $\mathscr{S}_T(\mK)$ in $\mathcal{C}_q$. Now, for any $\mK'\in \mathcal{U}$, we have
\[
J_q(\mathscr{S}_T(\mK')) = J_q(\mK')\geq J_q(\mK)=J_q(\mathscr{S}_T(\mK)),
\]
showing that $\mathscr{S}_T(\mK)$ minimizes $J_q$ over $\mathscr{S}_T(\mathcal{U})$. The proof is now complete.
\end{proof}

\Cref{lemma:local_minimum_similarity_trans} indicates that any local minimum of $J_q$ can be used to construct a family of local minima, parameterized by a similarity transformation matrix $T \in \mathrm{GL}_q$. Meanwhile, any policy $\mK \in \mathcal{C}_q$ can be converted into the so-called Kalman canonical form by some similarity transformation \cite[Theorem~3.10]{zhou1996robust}.
We define the Kalman canonical form below.

\begin{definition}
\label{definition:Kalman_canonical_form}
Let $q\in\mathbb{Z}_{>0}$ satisfy $\mathcal{C}_q\neq\varnothing$. We say that $\mK=\begin{bmatrix}
0 & C_{\mK} \\ B_{\mK} & A_{\mK}
\end{bmatrix}\in\mathcal{C}_q$ is in the Kalman canonical form, if the matrices $A_{\mK},B_{\mK},C_{\mK}$ have the following structural forms  
\begin{equation}
\label{eq:Kalman_canonical_form_controller}
A_{\mK} \!=\! \begin{bmatrix}
A_{\mK,0} & 0 & A_{\mK,02} & 0 \\
A_{\mK,10} & A_{\mK,1} & A_{\mK,12} & A_{\mK,13} \\
0 & 0 & A_{\mK,2} & 0 \\
0 & 0 & A_{\mK,32} & A_{\mK,3}
\end{bmatrix},
B_{\mK} \!=\! \begin{bmatrix}
B_{\mK,0} \\ B_{\mK,1} \\ 0_{p_{\mK,2}\times \ell} \\ 0_{p_{\mK,3}\times \ell}
\end{bmatrix},
C_{\mK} \!=\! \begin{bmatrix}
C_{\mK,0} & 0_{m\times p_{\mK,1}} & C_{\mK,2} & 0_{m\times p_{\mK,3}}
\end{bmatrix}.
\end{equation}
Here, $(A_{\mK,0},B_{\mK,0})$ is controllable and $(C_{\mK,0},A_{\mK,0})$ is observable; $p_{\mK,i}$ denotes the number of rows of $A_{\mK,i}$ for $i=1,2,3$; $B_{\mK,1}\in\mathbb{R}^{p_{\mK,1}\times\ell}$, and $C_{\mK,2}\in\mathbb{R}^{m\times p_{\mK,2}}$. 
\end{definition}

In \Cref{eq:Kalman_canonical_form_controller}, the dimension of the block $A_{\mK,0}$ is denoted by $r_\mK$, which also equals the number of rows of $B_{\mK,0}$ and the number of columns of $C_{\mK,0}$. The value $r_\mK$ is also the dimension of the minimal realization of $\mK$. By definition, we always have $q = r_{\mK} + p_{\mK,1} + p_{\mK,2} +p_{\mK,3}$. From a system-theoretic viewpoint, the Kalman canonical form \Cref{eq:Kalman_canonical_form_controller} exactly isolates controllable and observable states, controllable but unobservable states,  uncontrollable but observable states, and uncontrollable and unobservable states.

By \cite[Theorem~3.10]{zhou1996robust}, any $\mK\in\mathcal{C}_q$ can be converted into the Kalman canonical form by some similarity transformation. \Cref{lemma:local_minimum_similarity_trans} then indicates that, without loss of generality, we only need to consider local minima in the Kalman canonical form.
For notational convenience, when $\mK$ is in the Kalman canonical form~\cref{eq:Kalman_canonical_form_controller}, we define $\mathcal{I}_\mK\coloneq\setc*{i}{p_{\mK,i}>0}$, and introduce the linear space
\[
\mathcal{W}_\mK \coloneq
\setc*{(\Theta_i)_{i\in\mathcal{I}_\mK}}{\Theta_i\in \mathbb{R}^{p_{\mK,i}\times p_{\mK,i}}\text{ for each }i\in\mathcal{I}_\mK}.
\]
We will use $\mathcal{W}_\mK$ to add perturbations to the uncontrollable or unobservable blocks $A_{\mK,i}$ for $i\in\mathcal{I}_\mK$.  
In addition, we let $\mathfrak{m}(\mK)$ denote the controllable and observable policy\footnote{
We allow the case where $A_{\mK,0}$ is not present in~\cref{eq:Kalman_canonical_form_controller}. In this case, $r_\mK=0$ and $\mathfrak{m}(\mK)=0_{m\times\ell}$.
}
\[
\mathfrak{m}(\mK) \coloneq \begin{bmatrix}
0 & C_{\mK,0} \\ B_{\mK,0} & A_{\mK,0}
\end{bmatrix}.
\]

\begin{lemma}
\label{lemma:Kalman_canonical_diagonal_Hurwitz}
Given $q\in\mathbb{Z}_{>0}$, suppose $\mK\in\mathcal{C}_q$ is in the Kalman canonical form~\cref{eq:Kalman_canonical_form_controller}. Then $A_{\mK,i}$ is Hurwitz stable for each $i\in\mathcal{I}_\mK$, and
\[
\begin{bmatrix}
A & BC_{\mK,0} \\ B_{\mK,0}C & A_{\mK,0}
\end{bmatrix}
\]
is Hurwitz stable.
\end{lemma}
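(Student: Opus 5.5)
The plan is to show that, once $\mK$ is in the Kalman canonical form~\cref{eq:Kalman_canonical_form_controller}, the closed-loop matrix $A_{\mathrm{cl},\mK}$ is block triangular after a permutation of coordinates. Its spectrum then splits as the union of $\operatorname{eig}(A_{\mK,i})$ for $i\in\mathcal{I}_\mK$ and the spectrum of the reduced closed-loop matrix built from $\mathfrak{m}(\mK)$. Since $A_{\mathrm{cl},\mK}$ is Hurwitz by assumption, every diagonal block is Hurwitz, which is exactly the claim.

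We write the closed-loop state as $(x,\xi_0,\xi_1,\xi_2,\xi_3)$ and read off the couplings from~\cref{eq:closed-loop-matrices} and~\cref{eq:Kalman_canonical_form_controller}. The coupling terms are $BC_\mK\xi = B C_{\mK,0}\xi_0 + BC_{\mK,2}\xi_2$ and $B_\mK C x = (B_{\mK,0}Cx, B_{\mK,1}Cx, 0, 0)$. This gives the following dynamics for each component.
\begin{itemize}
\item $\dot\xi_2 = A_{\mK,2}\xi_2$, so $\xi_2$ depends on nothing else.
\item $\dot\xi_3 = A_{\mK,32}\xi_2 + A_{\mK,3}\xi_3$, so $\xi_3$ depends only on $\xi_2,\xi_3$.
\item $(x,\xi_0)$ evolves by the matrix $\begin{bmatrix} A & BC_{\mK,0}\\ B_{\mK,0}C & A_{\mK,0}\end{bmatrix}$ plus inputs from $\xi_2$ only, via $BC_{\mK,2}$ and $A_{\mK,02}$.
\item $\dot\xi_1$ depends on $x,\xi_0,\xi_1,\xi_2,\xi_3$, but no other state is driven by $\xi_1$. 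The only candidate entries are the zero blocks in column 2 of $A_\mK$ and the zero block of $C_\mK$.
\end{itemize}

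We therefore order the coordinates as $(\xi_2,\xi_3,(x,\xi_0),\xi_1)$. In this order the closed-loop matrix is block lower triangular, with diagonal blocks $A_{\mK,2}$, $A_{\mK,3}$, the reduced closed-loop matrix above, and $A_{\mK,1}$. Entries above the diagonal would correspond to $\xi_2$ depending on others, $\xi_3$ on $(x,\xi_0,\xi_1)$, or $(x,\xi_0)$ on $\xi_1$, and all of these are zero by the check above.

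A permutation similarity preserves eigenvalues, and the eigenvalues of a block triangular matrix are the union of those of its diagonal blocks. Hence every diagonal block is Hurwitz. Absent blocks (those with $p_{\mK,i}=0$, or $r_\mK=0$) are simply dropped; when $r_\mK=0$ the reduced block is just $A$, which is consistent with the convention $\mathfrak{m}(\mK)=0$.

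The only real care needed is the bookkeeping: verifying each zero block so that, for instance, $\xi_1$ does not feed back into $x$ through $C_\mK$ and $\xi_3$ does not feed $\xi_0$ through $A_{\mK,03}=0$. There is no genuine analytical obstacle.
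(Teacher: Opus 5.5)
Your proof is correct and follows essentially the same approach as the paper: both use the zero pattern of the Kalman canonical form to make $A_{\mathrm{cl},\mK}$ block triangular, so its spectrum is the union of the spectra of the diagonal blocks. The paper keeps the ordering $(x,\xi_0,\xi_1,\xi_2,\xi_3)$ and uses two nested partitions, block upper triangular over $\{x,\xi_0,\xi_1\}$ versus $\{\xi_2,\xi_3\}$ and then block lower triangular inside each part. Your single permutation to $(\xi_2,\xi_3,(x,\xi_0),\xi_1)$, giving one block lower triangular form, is the same argument.
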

\begin{proof}
Straightforward calculation establishes the identity
\begin{equation}
\label{eq:closed_loop_A}
A_{\mathrm{cl},\mK}
=\begin{bmatrix}
A & BC_\mK \\
B_\mK C & A_\mK
\end{bmatrix}
=\begin{bmatrix}
A & BC_{\mK,0} & 0 & BC_{\mK,2} & 0 \\
B_{\mK,0}C & A_{\mK,0} & 0 & A_{\mK,02} & 0 \\
B_{\mK,1}C & A_{\mK,10} & A_{\mK,1} & A_{\mK,12} & A_{\mK,13} \\
0 & 0 & 0 & A_{\mK,2} & 0 \\
0 & 0 & 0 & A_{\mK,32} & A_{\mK,3}
\end{bmatrix}.
\end{equation}
We first note that $A_{\mathrm{cl},\mK}$ can be partitioned as $A_{\mathrm{cl},\mK}
=\begin{bmatrix}
\bar{A}_{11} & \ast \\
0 & \bar{A}_{22}
\end{bmatrix}$, where
\[
\bar{A}_{11} = \begin{bmatrix}
A & BC_{\mK,0} & 0 \\
B_{\mK,0}C & A_{\mK,0} & 0 \\
B_{\mK,1}C & A_{\mK,10} & A_{\mK,1}
\end{bmatrix},
\qquad
\bar{A}_{22} = \begin{bmatrix}
A_{\mK,2} & 0 \\
A_{\mK,32} & A_{\mK,3}
\end{bmatrix}.
\]
Therefore
$
\operatorname{eig}(A_{\mathrm{cl},\mK})
=\operatorname{eig}(\bar{A}_{11})\cup\operatorname{eig}(\bar{A}_{22}).
$ 
Now $\bar{A}_{22}$ is already in the block triangular form, indicating that $\operatorname{eig}(\bar{A}_{22})=\operatorname{eig}(A_{\mK,2})\cup \operatorname{eig}(A_{\mK,3})$. For $\bar{A}_{11}$, we may further partition it as
\[
\bar{A}_{11}=\begin{bmatrix}
A_{\mathrm{cl},\mathfrak{m}(\mK)} & 0 \\
\ast & A_{\mK,1}
\end{bmatrix},
\qquad\text{where}\quad
A_{\mathrm{cl},\mathfrak{m}(\mK)} = 
\begin{bmatrix}
A & BC_{\mK,0} \\ B_{\mK,0}C & A_{\mK,0}
\end{bmatrix}.
\]
Thus $\operatorname{eig}(\bar{A}_{11})=\operatorname{eig}(A_{\mathrm{cl},\mathfrak{m}(\mK)})\cup\operatorname{eig}(A_{\mK,1})$. Summarizing the above results, we obtain
\begin{equation}
\label{eq:closed_loop_eigenvalues}
\operatorname{eig}(A_{\mathrm{cl},\mK})
=\operatorname{eig}\!\left(\begin{bmatrix}
A & BC_{\mK,0} \\ B_{\mK,0}C & A_{\mK,0}
\end{bmatrix}\right)\cup \bigcup_{i\in\mathcal{I}_\mK}\operatorname{eig}(A_{\mK,i}).
\end{equation}

Finally, $\mK\in\mathcal{C}_q$ implies that $\operatorname{eig}(A_{\mathrm{cl},\mK})$ is a subset of the open left half-plane of $\mathbb{C}$. Together with \cref{eq:closed_loop_eigenvalues}, we complete the proof.
\end{proof}

Given $\mK=\begin{bmatrix}
0 & C_{\mK} \\ B_{\mK} & A_{\mK}
\end{bmatrix}\in\mathcal{C}_q$ in the Kalman canonical form~\cref{eq:Kalman_canonical_form_controller}, for all $\epsilon\in[0,1]$ and $\Theta=(\Theta_i)_{i\in\mathcal{I}_\mK}\in\mathcal{W}_\mK$, we shall denote a family of policies as 
\begin{equation} \label{eq:perturbed-policies}
\mK(\Theta,\epsilon)\coloneq
\begin{bmatrix}
0 & C_{\mK}(\epsilon) \\ B_{\mK}(\epsilon) & A_{\mK}(\Theta,\epsilon)
\end{bmatrix},
\end{equation}
where
\[
A_{\mK}(\Theta,\epsilon) \coloneq
\begin{bmatrix}
A_{\mK,0} & 0 & \epsilon A_{\mK,02} & 0 \\
\epsilon A_{\mK,10} & A_{\mK,1}+\Theta_1 & \epsilon^2 A_{\mK,12} & \epsilon A_{\mK,13} \\
0 & 0 & A_{\mK,2}+\Theta_2 & 0 \\
0 & 0 & \epsilon A_{\mK,32} & A_{\mK,3}+\Theta_3
\end{bmatrix}, \qquad B_{\mK}(\epsilon) \coloneq \begin{bmatrix}
B_{\mK,0} \\ \epsilon B_{\mK,1} \\ 0_{p_{\mK,2}\times\ell} \\ 0_{p_{\mK,3}\times\ell}
\end{bmatrix},
\]
and
$
C_{\mK}(\epsilon) \coloneq \begin{bmatrix}
C_{\mK,0} & 0_{m\times p_{\mK,1}} & \epsilon C_{\mK,2} & 0_{m\times p_{\mK,3}}
\end{bmatrix}.
$ 
Note that the policy $\mK(\Theta,0)$ can be written as
\begin{equation} \label{eq:fully-decoupled-policies}
\mK(\Theta,0)=\begin{bmatrix}
0 & C_{\mK,0} & 0 \\
B_{\mK,0} & A_{\mK,0} & 0 \\
0 & 0 & \Lambda
\end{bmatrix} = \mathfrak{m}(\mK)\oplus \Lambda
\end{equation}
for some square matrix $\Lambda$. This form \cref{eq:fully-decoupled-policies} has only two blocks: controllable and observable states, and uncontrollable and unobservable states. We simply denote $\mK(\Theta)\coloneq\mK(\Theta,1)$ and $A_\mK(\Theta)\coloneq A_\mK(\Theta,1)$.  

This family of policies \cref{eq:perturbed-policies} has the same LQG cost whenever $\mK(\Theta,\epsilon)\in\mathcal{C}_q$. 

\begin{lemma} Given $q\in\mathbb{Z}_{>0}$, suppose $\mK\in\mathcal{C}_q$ is in the Kalman canonical form~\cref{eq:Kalman_canonical_form_controller}. Define $\mK(\Theta,\epsilon)$ as in \Cref{eq:perturbed-policies}. Then, we have 
    \[
J_q(\mK(\Theta,\epsilon))=J_q(\mK)
\]
for all $\epsilon\in[0,1]$ and all $\Theta\in\mathcal{W}_\mK$ such that $\mK(\Theta,\epsilon)\in\mathcal{C}_q$.
\end{lemma}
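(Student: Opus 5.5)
The plan is to show that the closed-loop transfer function $\mathbf{T}_{zd,\mK(\Theta,\epsilon)}$ does not depend on $(\Theta,\epsilon)$. Since $J_q$ is the squared $\mathcal{H}_2$ norm of this transfer function, the lemma then follows. It suffices to show that the closed-loop map equals that of the minimal part $\mathfrak{m}(\mK)$, i.e.
\[
\mathbf{T}_{zd,\mK(\Theta,\epsilon)}(s)=\mathbf{T}_{zd,\mathfrak{m}(\mK)}(s),
\]
for every admissible $(\Theta,\epsilon)$. The right-hand side is clearly independent of $(\Theta,\epsilon)$. It is also well defined, because by \Cref{lemma:Kalman_canonical_diagonal_Hurwitz} the block $A_{\mathrm{cl},\mathfrak{m}(\mK)}$ is Hurwitz. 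Applying the same equality at $(\Theta,\epsilon)=(0,1)$, where $\mK(0,1)=\mK$, gives $J_q(\mK(\Theta,\epsilon))=J_r(\mathfrak{m}(\mK))=J_q(\mK)$.

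First I observe that for every $\epsilon\in[0,1]$ and every $\Theta$, the policy $\mK(\Theta,\epsilon)$ has exactly the block-sparsity pattern of the Kalman canonical form \cref{eq:Kalman_canonical_form_controller}. The blocks are $A_{\mK,0},B_{\mK,0},C_{\mK,0}$ (unchanged), $\epsilon B_{\mK,1}$, $\epsilon C_{\mK,2}$, and $A_{\mK,i}+\Theta_i$ on the diagonal, with scaled off-diagonal couplings. For $\epsilon=0$ some couplings vanish, but the zero pattern is only enlarged, and the argument below uses only the zero pattern.

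Then, in the closed-loop matrices, I order the state as $(x,\xi_0,\xi_1,\xi_2,\xi_3)$ as in \cref{eq:closed_loop_A}. The second step is the structural check. The states $\xi_2,\xi_3$ are not driven by $x,\xi_0,\xi_1$: their rows in $A_{\mathrm{cl}}$ have zeros in those columns, and their rows of $B_{\mathrm{cl}}$ vanish because the last two blocks of $B_{\mK}(\epsilon)$ are zero. Since they start at zero in the impulse/transfer sense, they stay identically zero. Hence they can be deleted, leaving the realization on $(x,\xi_0,\xi_1)$. In that reduced realization, $\xi_1$ does not feed into $x$ or $\xi_0$, because the corresponding columns are zero: $BC_\mK$ has a zero block at $\xi_1$ and $A_{\mK,01}=0$. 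Moreover, $\xi_1$ does not appear in the output, since the $\xi_1$ block of $C_{\mathrm{cl}}$ is $R^{1/2}\cdot 0$. So $\xi_1$ is unobservable and can also be deleted. What remains is exactly the closed-loop realization $(A_{\mathrm{cl},\mathfrak{m}(\mK)},B_{\mathrm{cl},\mathfrak{m}(\mK)},C_{\mathrm{cl},\mathfrak{m}(\mK)})$.

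Formally, I would carry this out by writing $A_{\mathrm{cl}}$ in the block-triangular form of the proof of \Cref{lemma:Kalman_canonical_diagonal_Hurwitz}. I would then compute $(sI-A_{\mathrm{cl}})^{-1}$ via block-triangular inversion and note that the only blocks of the inverse that survive multiplication by $C_{\mathrm{cl}}$ on the left and $B_{\mathrm{cl}}$ on the right are $(sI-A_{\mathrm{cl},\mathfrak{m}(\mK)})^{-1}$. This equality of rational functions holds for all $s$ in the resolvent sets. Because $\mK(\Theta,\epsilon)\in\mathcal{C}_q$, the $\mathcal{H}_2$ norms of both sides are finite and coincide.

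The main obstacle is bookkeeping rather than an idea. I must make sure that no nonzero coupling among $\epsilon A_{\mK,10},\epsilon^2A_{\mK,12},\epsilon A_{\mK,13},\epsilon A_{\mK,02},\epsilon A_{\mK,32}$ creates a path from the input $d$ through $\xi_1$ or $\xi_2$ to $z$. Such a path would need to enter $\xi_2$ or $\xi_3$ from the driven states, or to leave $\xi_1$ toward the observed states. Both entries are zero by the canonical pattern, so I will verify this entry by entry against \cref{eq:closed_loop_A}.
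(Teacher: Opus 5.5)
Your proof is correct, but it works at a different level from the paper's. The paper's proof is a one-liner at the controller level. The zero pattern of $\mK(\Theta,\epsilon)$ makes the controller transfer matrix $C_{\mK}(\epsilon)(sI-A_{\mK}(\Theta,\epsilon))^{-1}B_{\mK}(\epsilon)$ equal to $C_{\mK,0}(sI-A_{\mK,0})^{-1}B_{\mK,0}$. The paper then invokes the standard fact that the LQG cost of an internally stabilizing policy depends only on its controller transfer matrix.

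You instead apply the same block-triangular bookkeeping to the closed-loop realization $(A_{\mathrm{cl}},B_{\mathrm{cl}},C_{\mathrm{cl}})$ ordered as in \cref{eq:closed_loop_A}. You delete the closed-loop-unreachable states $(\xi_2,\xi_3)$ and then the unobservable state $\xi_1$, landing directly on $\mathbf{T}_{zd,\mathfrak{m}(\mK)}$. Your entry-by-entry checks are right, including at $\epsilon=0$. The $\xi_2,\xi_3$ rows of $A_{\mathrm{cl}}$ vanish in the $x,\xi_0,\xi_1$ columns, and the corresponding rows of $B_{\mathrm{cl}}$ vanish. The $\xi_1$ column vanishes in the $x,\xi_0$ rows and in $C_{\mathrm{cl}}$.

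Your route buys self-containedness. It does not rely on the interconnection fact that equal controller transfer matrices give equal closed-loop maps. It also yields the slightly stronger identity $J_q(\mK(\Theta,\epsilon))=J_{r_\mK}(\mathfrak{m}(\mK))$, which the paper uses without separate comment at the end of the proof of \Cref{theorem:nonminimal-local-minimum-global}. The paper's route is shorter and reuses a principle it invokes repeatedly elsewhere, e.g.\ in \Cref{lemma:hessian_augmented_general}.
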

\begin{proof}
The transfer matrix of the dynamic controller parameterized by $\mK(\Theta,\epsilon)$ is given by $C_{\mK,0}(sI-A_{\mK,0})^{-1}B_{\mK,0}$, which is independent of $\Theta\in\mathcal{W}_\mK$ and $\epsilon\in[0,1]$. Thus $\mK(\Theta,\epsilon)$ induces the same LQG cost as $\mK(0,1)=\mK$.
\end{proof}

For any local minimum $\mK\in\mathcal{C}_q$, its Hessian must be positive semidefinite. If it is further an uncontrollable or unobservable policy in the Kalman canonical form, then we can ensure that there exists a class of perturbed policies $\mK(\Theta,0)$ with positive semidefinite Hessians.

\begin{lemma}
\label{lemma:hessian_neighborhood_Kalman}
Let $q\in\mathbb{Z}_{>0}$ satisfy $\mathcal{C}_q\neq\varnothing$,
and suppose $\mK=\begin{bmatrix}
0 & C_{\mK} \\ B_{\mK} & A_{\mK}
\end{bmatrix}\in\mathcal{C}_q$ is a local minimum of $J_q$ in the Kalman canonical form~\cref{eq:Kalman_canonical_form_controller} which is uncontrollable or unobservable. 
Then, there exists an open neighborhood $\mathcal{O}_\mK$ of $0$ in $\mathcal{W}_\mK$ such that
\[
\mK(\Theta,0)\in\mathcal{C}_q
\qquad\text{and}\qquad
D^2 J_q(\mK(\Theta,0))[\Delta,\Delta]\geq 0,
\quad\forall \Delta\in\mathcal{V}_q
\]
for all $\Theta\in\mathcal{O}_\mK$.
\end{lemma}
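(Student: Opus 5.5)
The idea is to show that every $\mK(\Theta,\epsilon)$ with small $\Theta$ and $\epsilon\in(0,1]$ is itself a local minimum of $J_q$, and then pass to the limit $\epsilon\to0$ using continuity of the Hessian.

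\textbf{Step 1: stability of the whole family.} We first choose $\mathcal{O}_\mK$ so that $\mK(\Theta,\epsilon)\in\mathcal{C}_q$ for every $\Theta\in\mathcal{O}_\mK$ and every $\epsilon\in[0,1]$. The computation in \cref{eq:closed_loop_A} applies verbatim to $\mK(\Theta,\epsilon)$, since the block sparsity pattern is unchanged. Hence \cref{eq:closed_loop_eigenvalues} gives
\[
\operatorname{eig}(A_{\mathrm{cl},\mK(\Theta,\epsilon)})=\operatorname{eig}(A_{\mathrm{cl},\mathfrak{m}(\mK)})\cup\bigcup_{i\in\mathcal{I}_\mK}\operatorname{eig}(A_{\mK,i}+\Theta_i),
\]
which does not depend on $\epsilon$. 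By \Cref{lemma:Kalman_canonical_diagonal_Hurwitz}, each $A_{\mK,i}$ is Hurwitz. Since $\mathfrak{H}_{p_{\mK,i}}$ is open, we can take $\mathcal{O}_\mK$ to be a small ball around $0$ on which every $A_{\mK,i}+\Theta_i$ remains Hurwitz. This places the whole family, including $\mK(\Theta,0)$, in $\mathcal{C}_q$.

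\textbf{Step 2: local minimality for $\epsilon>0$.} For $\epsilon\in(0,1]$, let $T_\epsilon=\operatorname{diag}(I_{r_\mK},\epsilon^{-1}I_{p_{\mK,1}},\epsilon I_{p_{\mK,2}},I_{p_{\mK,3}})$. A direct check of the blocks shows that the similarity transform $\mathscr{S}_{T_\epsilon}$ maps $\mK$ to $\mK(0,\epsilon)$. For example, block $(1,3)$ becomes $A_{\mK,02}\epsilon$, block $(2,1)$ becomes $\epsilon A_{\mK,10}$, block $(2,3)$ becomes $\epsilon\cdot A_{\mK,12}\cdot\epsilon$, and block $(4,3)$ becomes $A_{\mK,32}\epsilon$; the rows of $B_\mK$ scale as $\epsilon B_{\mK,1}$, and $C_{\mK,2}$ becomes $\epsilon C_{\mK,2}$. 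By \Cref{lemma:local_minimum_similarity_trans}, $\mK(0,\epsilon)$ is then a local minimum. Adding $\Theta$ to the diagonal blocks commutes with this transform, because $T_\epsilon$ is block-scalar, so $\mathscr{S}_{T_\epsilon}(\mK(\Theta,1))=\mK(\Theta,\epsilon)$. It therefore suffices to show that $\mK(\Theta)=\mK(\Theta,1)$ is a local minimum for $\Theta$ near $0$.

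\textbf{Step 3: the main obstacle, local minimality of $\mK(\Theta)$.} Here the cost is the same, $J_q(\mK(\Theta))=J_q(\mK)$, but the neighborhood has moved. The local-minimum neighborhood $\mathcal{U}$ of $\mK$ contains a ball $B(\mK,2\rho)$. For $\|\Theta\|<\rho$, the point $\mK(\Theta)$ lies within distance $\rho$ of $\mK$, so $B(\mK(\Theta),\rho)\subseteq B(\mK,2\rho)\subseteq\mathcal{U}$. On this ball $J_q\ge J_q(\mK)=J_q(\mK(\Theta))$. Thus $\mK(\Theta)$ is a local minimum as soon as $\mathcal{O}_\mK$ also lies inside the ball of radius $\rho$. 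The constant-cost lemma is exactly what makes this elementary argument work: the whole $\Theta$-family sits at the minimal value inside the fixed neighborhood $\mathcal{U}$.

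\textbf{Step 4: limit $\epsilon\to0$.} Combining Steps 2 and 3, each $\mK(\Theta,\epsilon)$ with $\epsilon\in(0,1]$ is an interior local minimum of the smooth function $J_q$, so $D^2J_q(\mK(\Theta,\epsilon))\succeq0$. The map $\epsilon\mapsto\mK(\Theta,\epsilon)$ is continuous into the open set $\mathcal{C}_q$, and $J_q$ is $C^2$ (indeed real-analytic) there, being defined through a Lyapunov equation. Letting $\epsilon\to0^+$ therefore gives $D^2J_q(\mK(\Theta,0))[\Delta,\Delta]\ge0$ for all $\Delta\in\mathcal{V}_q$. The hypothesis that $\mK$ is uncontrollable or unobservable is used only to ensure that $\mathcal{I}_\mK\neq\varnothing$, so that $\mathcal{W}_\mK$ is nontrivial.
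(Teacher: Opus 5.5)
Your proof is correct and follows essentially the paper's route: the constant cost along the $\Theta$-family makes each $\mK(\Theta)$ a local minimum, the block-diagonal scaling similarity transfers this to $\mK(\Theta,\epsilon)$ for $\epsilon\in(0,1]$, and continuity of the Hessian passes the inequality to $\epsilon=0$. You obtain stability of the whole family directly from openness of the Hurwitz set, where the paper applies \cref{lemma:Kalman_canonical_diagonal_Hurwitz} to $\mK(\Theta)$; both work. One harmless slip: under the convention $\mathscr{S}_T(\mK)=(C_\mK T^{-1},TB_\mK,TA_\mK T^{-1})$ of \cref{lemma:local_minimum_similarity_trans}, your block checks actually describe $\mathscr{S}_{T_\epsilon^{-1}}$, i.e., the paper's $T_\epsilon=\operatorname{diag}(I,\epsilon I,\epsilon^{-1}I,I)$.
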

\begin{proof}
Note that $\mathcal{I}_\mK\neq\varnothing$ and hence $\mathcal{W}_\mK\neq\varnothing$ because $\mK$ is not controllable or observable. Let $\mathcal{U}$ be an open neighborhood of $\mK$ in $\mathcal{C}_q$ such that $J_q(\mK)\leq J_q(\mK')$ for all $\mK'\in\mathcal{U}$. Since the mapping $\Theta\mapsto\mK(\Theta)$ is continuous, we can find $\rho>0$ and subsequently define
\[
\mathcal{O}_\mK = \setc*{(\Theta_i)_{i\in\mathcal{I}_\mK}\in\mathcal{W}_\mK}{\|\Theta_i\|_F<\rho\text{ for all }i\in\mathcal{I}_\mK},
\]
such that $\setc*{\mK(\Theta)}{\Theta\in\mathcal{O}_\mK}$ is a subset of $\mathcal{U}$. Then for each $\Theta\in\mathcal{O}_\mK$, we have $\mK(\Theta)\in\mathcal{U}$ and
\[
J_q(\mK(\Theta))=J_q(\mK)\leq J_q(\mK'),\qquad\forall\mK'\in\mathcal{U},
\]
showing that $\mK(\Theta)$ is a local minimum of $J_q$ for all $\Theta\in\mathcal{O}_\mK$. 

Next, we show that $\mK(\Theta,\epsilon)$ is a local minimum of $J_q$ for any $\epsilon\in(0,1]$ and $\Theta\in\mathcal{O}_\mK$. Let
\[
T_\epsilon = \operatorname{diag}(
I_{r_{\mK}},\epsilon I_{p_{\mK,1}},\epsilon^{-1} I_{p_{\mK,2}},I_{p_{\mK,3}})
\]
for $\epsilon\in(0,1]$, and it is easy to check that
\[
A_\mK(\Theta,\epsilon) = T_\epsilon A_{\mK}(\Theta)T_\epsilon^{-1},
\quad
B_\mK(\epsilon)=T_\epsilon B_\mK,
\quad
C_\mK(\epsilon) = C_\mK T_\epsilon^{-1}.
\]
In other words, for each $\epsilon\in(0,1]$,  $\mK(\Theta,\epsilon)$ and $\mK(\Theta)$ are related by the similarity transformation associated with $T_\epsilon$. 
By \cref{lemma:local_minimum_similarity_trans}, we see that $\mK(\Theta,\epsilon)$ is a local minimum of $J_q$ for all $\Theta\in\mathcal{O}_\mK$ and $\epsilon\in(0,1]$, and consequently the Hessian $D^2 J_q(\mK(\Theta,\epsilon))$ is positive semidefinite when $\Theta\in\mathcal{O}_\mK$ and $\epsilon\in(0,1]$.

We then show that $\mK(\Theta,0)\in\mathcal{C}_q$. Noting that $\mK(\Theta)$ is in the Kalman canonical form and that $\mK(\Theta)\in\mathcal{C}_q$ for each $\Theta\in\mathcal{O}_\mK$, by using \cref{lemma:Kalman_canonical_diagonal_Hurwitz}, we see that $A_{\mK,i}+\Theta_i$ for each $i\in\mathcal{I}_\mK$ and $\begin{bmatrix}
A & BC_{\mK,0} \\ B_{\mK,0}C & A_{\mK,0}
\end{bmatrix}$ are Hurwitz stable whenever $\Theta\in\mathcal{O}_\mK$. Meanwhile, by definition \cref{eq:perturbed-policies} (see also \cref{eq:fully-decoupled-policies}), we have  
\begin{equation*}
A_{\mathrm{cl},\mK(\Theta,0)}=
\begin{bmatrix}
A & BC_\mK(0) \\
B_\mK(0) C & A_\mK(\Theta,0)
\end{bmatrix}
= \operatorname{diag}\left( \begin{bmatrix}
 A & BC_{\mK,0} \\
B_{\mK,0}C & A_{\mK,0}\end{bmatrix}, A_{\mK,1}+\Theta_1, A_{\mK,2}+\Theta_2, A_{\mK,3}+\Theta_3\right).
\end{equation*}
Thus $A_{\mathrm{cl},\mK(\Theta,0)}$ is Hurwitz stable, and we can conclude that $\mK(\Theta,0)\in\mathcal{C}_q$ for every $\Theta\in\mathcal{O}_\mK$.

Now let $\Delta\in\mathcal{V}_q$ and $\Theta\in\mathcal{O}_\mK$ be arbitrary and fixed. For each $\epsilon\in[0,1]$, define
\[
h(\epsilon) = D^2 J_q(\mK(\Theta,\epsilon))[\Delta,\Delta].
\]
We then have $h(\epsilon)\geq 0$ for all $\epsilon\in(0,1]$. Since $J_q$ is real-analytic over $\mathcal{C}_q$, the function $h$ is continuous on $[0,1]$. Thus
\[
h(0)=\lim_{\epsilon\downarrow 0}h(\epsilon)\geq 0,
\]
which completes the proof.
\end{proof}

Note that when $\mK\in\mathcal{C}_q$ is in the Kalman canonical form, the policy $\mK(\Theta,0)$ is in the augmented form \cref{eq:fully-decoupled-policies}.
It turns out that the Hessian of policies in the augmented form has further structures, and the next section will reveal such structures.

\subsection{Hessian for local minima in the augmented form} \label{section:Hessian}

For any $\mK
=
\begin{bmatrix}
0 & C_{\mK} \\
B_{\mK} & A_{\mK}
\end{bmatrix}
\in\mathcal{C}_q$ and $\Lambda\in\mathbb{R}^{p\times p}$, recall that the augmented policy $\mK\oplus\Lambda$ is defined in \cref{eq:augmented-policy}.
Given $\mK\in\mathcal{C}_q$ and $\Lambda\in\mathbb{R}^{p\times p}$, we have $\mK\oplus\Lambda\in\mathcal{C}_{q+p}$ if and only if $\Lambda\in\mathfrak{H}_p$ (i.e., $\Lambda$ is Hurwitz stable).

Our main technical result in this subsection is the following Hessian characterization. 

\begin{proposition}
\label{lemma:vanishing_hessian_augmented_scalar}
Let $q\in\mathbb{Z}_{>0}$ satisfy $\mathcal{C}_q\neq\varnothing$,
and suppose $\mK\in\mathcal{C}_{q}$ is a local minimum of $J_q$ in the Kalman canonical form~\cref{eq:Kalman_canonical_form_controller} which is uncontrollable or unobservable. Recall that $\mathfrak{m}(\mK)$ denotes the controllable and observable policy
\[
\mathfrak{m}(\mK) \coloneq \begin{bmatrix}
0 & C_{\mK,0} \\ B_{\mK,0} & A_{\mK,0}
\end{bmatrix},
\]
and that $r_\mK$ denotes the dimension of $A_{\mK,0}$. Then for any $\lambda\in(-\infty,0)$, we have
\[
D^2 J_q(\mathfrak{m}(\mK)\oplus\lambda I_{q-r_\mK})[\Delta,\Delta]=0, \qquad \forall \, \Delta = \begin{bmatrix}
0_{(m+r_{\mK})\times(\ell+r_{\mK})} & \Delta_{12} \\
\Delta_{21} & 0_{(q - r_{\mK})\times(q - r_{\mK})}
\end{bmatrix} 
\]
with $\Delta_{12} \in \mathbb{R}^{(m+r_{\mK}) \times (q - r_{\mK})}$, and  $\Delta_{21} \in \mathbb{R}^{(q - r_{\mK}) \times (\ell+r_{\mK})}$.
\end{proposition}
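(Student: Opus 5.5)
The plan is to exploit the fact that the second-order variation of $J_q$ at a decoupled augmented policy, in the off-diagonal directions $\Delta$ of the statement, reduces to a bilinear cross term in $(\Delta_{12},\Delta_{21})$. Positive semidefiniteness of such a form forces it to vanish, and real-analyticity in the appended block then propagates this vanishing from a neighborhood of the original uncontrollable/unobservable blocks to $\lambda I_{q-r_\mK}$.

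\emph{Step 1: reduce the Hessian to a cross term.} Fix $\mathfrak{m}(\mK)\in\mathcal{C}_{r_\mK}$, which is stabilizing by \cref{lemma:Kalman_canonical_diagonal_Hurwitz}. For a Hurwitz $\Lambda\in\mathfrak{H}_{q-r_\mK}$ and $\Delta$ of the stated off-diagonal form, consider $g(t)=J_q(\mathfrak{m}(\mK)\oplus\Lambda+t\Delta)$.
\begin{itemize}
\item If $\Delta_{21}=0$, every perturbed policy keeps the appended states uncontrollable. The controller transfer matrix is then unchanged, so $g$ is constant.
\item Likewise, if $\Delta_{12}=0$, the appended states stay unobservable, the transfer matrix is unchanged, and $g$ is again constant.
\end{itemize}
Polarizing the quadratic form $D^2J_q(\mathfrak{m}(\mK)\oplus\Lambda)$, I obtain
\[
D^2J_q(\mathfrak{m}(\mK)\oplus\Lambda)[\Delta,\Delta]=2\,\mathcal{B}_\Lambda(\Delta_{12},\Delta_{21}),
\]
where $\mathcal{B}_\Lambda$ is bilinear in $(\Delta_{12},\Delta_{21})$. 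Note that $\mathcal{B}_\Lambda(\Delta_{12},-\Delta_{21})=-\mathcal{B}_\Lambda(\Delta_{12},\Delta_{21})$. Hence whenever this Hessian is positive semidefinite, $\mathcal{B}_\Lambda\equiv0$. Moreover, $\Lambda\mapsto\mathcal{B}_\Lambda$ is real-analytic on $\mathfrak{H}_{q-r_\mK}$, because $J_q$ is real-analytic on $\mathcal{C}_q$.

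\emph{Step 2: vanishing near the Kalman blocks.} By \cref{eq:fully-decoupled-policies}, $\mK(\Theta,0)=\mathfrak{m}(\mK)\oplus\Lambda_\Theta$ with
\[
\Lambda_\Theta=\operatorname{diag}\bigl(A_{\mK,i}+\Theta_i\bigr)_{i\in\mathcal{I}_\mK},
\]
where I order the blocks so that $p_{\mK,1},p_{\mK,2},p_{\mK,3}$ sum to $q-r_\mK$. \Cref{lemma:hessian_neighborhood_Kalman} gives positive semidefinite Hessians there, so $\mathcal{B}_{\Lambda_\Theta}\equiv0$ for all $\Theta\in\mathcal{O}_\mK$.

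The difficulty is that this only covers a neighborhood within the block-diagonal Hurwitz matrices, not an open set of $\mathfrak{H}_{q-r_\mK}$. I handle this by a block decomposition. Split $\Delta_{12}$ into column blocks $\Delta_{12}^{(i)}$ and $\Delta_{21}$ into row blocks $\Delta_{21}^{(j)}$ matching the blocks of $\Lambda$. When $\Lambda=\operatorname{diag}(\Lambda_i)$ is block diagonal, the cross term between $\Delta_{12}^{(i)}$ and $\Delta_{21}^{(j)}$ with $i\neq j$ vanishes: the perturbed controller transfer matrix to first order in each coupling involves $\Delta_{12}^{(i)}(sI-\Lambda)^{-1}_{ij}\Delta_{21}^{(j)}$, which is zero. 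Equivalently, setting all blocks except the pair to zero leaves every appended state either uncontrollable or unobservable. Therefore
\[
\mathcal{B}_\Lambda=\sum_i \mathcal{B}^{(i)}_{\Lambda_i}\bigl(\Delta_{12}^{(i)},\Delta_{21}^{(i)}\bigr),
\]
where $\mathcal{B}^{(i)}_{\Lambda_i}$ is the analogous cross form for $\mathfrak{m}(\mK)\oplus\Lambda_i$, augmented by decoupled blocks that do not matter. Testing with directions supported on a single block $i$ shows $\mathcal{B}^{(i)}_{\Lambda_i}\equiv0$ for all $\Lambda_i$ in an open neighborhood of $A_{\mK,i}$ in $\mathfrak{H}_{p_{\mK,i}}$.

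\emph{Step 3: analytic continuation.} The set $\mathfrak{H}_{p}$ is open and path-connected: any Hurwitz $M$ connects to $-I$ via $M-tI$ followed by a straight segment from $M-TI$ to $-I$, which stays Hurwitz for large $T$. Each $\mathcal{B}^{(i)}_{\Lambda_i}$ is real-analytic in $\Lambda_i$ and vanishes on an open subset, so by the identity theorem it vanishes on all of $\mathfrak{H}_{p_{\mK,i}}$, in particular at $\lambda I_{p_{\mK,i}}$ for every $\lambda<0$. Since $\lambda I_{q-r_\mK}=\operatorname{diag}(\lambda I_{p_{\mK,i}})$ is block diagonal, the decomposition from Step 2 gives $\mathcal{B}_{\lambda I}\equiv0$. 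By Step 1, this is the claim.

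I expect the main obstacle to be Step 2. It requires rigorously justifying the splitting of the cross form for block-diagonal $\Lambda$ and carefully identifying $\mathcal{B}^{(i)}$ as a form depending analytically on $\Lambda_i$ alone. If this is messy, a fallback is to use the explicit Hessian formula from the appendix. That formula expresses the cross term through $(sI-\Lambda)^{-1}$ paired with closed-loop quantities of $\mathfrak{m}(\mK)$, making both the block splitting and the analyticity in $\Lambda_i$ transparent.
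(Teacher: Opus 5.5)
Your proof is correct. It uses the same three ingredients as the paper: positive semidefiniteness at $\mK(\Theta,0)$ from \cref{lemma:hessian_neighborhood_Kalman}, the sign-flip argument of \cref{corollary:vanishing_hessian_augmented}, and analytic continuation. It does, however, take a detour that the paper avoids.

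The difficulty you flag in Step 2 is not actually an obstacle. The identity theorem does not require an open subset of $\mathfrak{H}_{q-r_\mK}$. The paper fixes $\Delta$ and continues $h(\Theta)=D^2J_q(\mK(\Theta,0))[\Delta,\Delta]$ on $\mathcal{W}_\mK^{\mathrm{H}}=\{\Theta:\ A_{\mK,i}+\Theta_i\in\mathfrak{H}_{p_{\mK,i}}\ \forall i\in\mathcal{I}_\mK\}$. This set is a product of translated copies of the $\mathfrak{H}_{p_{\mK,i}}$, hence open and connected in $\mathcal{W}_\mK$. Since $h$ vanishes on the open set $\mathcal{O}_\mK$, it vanishes on all of $\mathcal{W}_\mK^{\mathrm{H}}$. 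The target $\lambda I_{q-r_\mK}$ is itself block diagonal, reached by $\Theta_\lambda=(\lambda I-A_{\mK,i})_{i\in\mathcal{I}_\mK}$, so continuation never has to leave the block-diagonal slice.

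Your block splitting is nonetheless valid:
\begin{itemize}
\item For block-diagonal $\Lambda$, take a direction carrying only $\Delta_{12}^{(i)}$ and $\Delta_{21}^{(j)}$ with $i\neq j$. It leaves block $i$ uncontrollable and block $j$ unobservable, with closed-loop eigenvalues still those of $\mathfrak{m}(\mK)$'s loop together with $\operatorname{eig}(\Lambda_i)$ and $\operatorname{eig}(\Lambda_j)$. The cost is therefore constant along it, and together with Step 1 the mixed Hessian term vanishes.
\item Along directions supported on block $i$ alone, the remaining blocks stay decoupled. The cost then coincides with $J_{r_\mK+p_{\mK,i}}$ near $\mathfrak{m}(\mK)\oplus\Lambda_i$, so $\mathcal{B}^{(i)}$ is a real-analytic function of $\Lambda_i$ only.
\end{itemize}

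What your route buys is a per-block statement: the cross form at $\mathfrak{m}(\mK)\oplus\Lambda_i$ vanishes for every $\Lambda_i\in\mathfrak{H}_{p_{\mK,i}}$. This is not needed downstream. It costs an extra lemma, whereas the paper's version is shorter and needs no decomposition of the bilinear form.

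As a minor point, $\mathfrak{H}_p$ is star-shaped about $-I$, because $(1-s)M-sI$ is Hurwitz for all $s\in[0,1]$ whenever $M$ is. The preliminary shift by $-TI$ in your connectedness argument is therefore unnecessary.
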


The proof of this result requires \Cref{lemma:hessian_neighborhood_Kalman}, some further Hessian characterizations below, as well as the zero set of real analytic functions. Both $\mathfrak{m}(\mK)\oplus\lambda I_{q-r_\mK}$ in \Cref{lemma:vanishing_hessian_augmented_scalar} and $\mK(\Theta,0)$ in \Cref{lemma:hessian_neighborhood_Kalman} (also see \cref{eq:fully-decoupled-policies}) are policies in the augmented form, both of which contain the same minimal realization $\mathfrak{m}(\mK)$. One good feature in \Cref{lemma:vanishing_hessian_augmented_scalar} is that the policy $\mathfrak{m}(\mK)\oplus\lambda I_{q-r_\mK}$ allows using any diagonal block $\lambda I_{q-r_\mK}$ with $\lambda\in(-\infty,0)$, and that its second-order directional derivatives 
must be exactly zero for all restricted directions $\Delta$  with zero diagonal blocks.
This refinement will be one key step in establishing \cref{theorem:nonminimal-local-minimum-global,theorem:hessian-negative-direction}.

In the rest of this subsection, we present further Hessian characterizations to prove  \Cref{lemma:vanishing_hessian_augmented_scalar}. 
Given any $\mK
\in\mathcal{C}_q$ and $\Lambda\in\mathfrak{H}_p$, we focus on $D^2 J_{q+p}(\mK\oplus\Lambda)(\Delta,\Delta)$ in which $\Delta$ has the~form:
\begin{equation}
\label{eq:hessian_Delta_partition}
\Delta = \begin{bmatrix}
0_{(m+q)\times(\ell+q)} & \Delta_{12} \\
\Delta_{21} & 0_{p\times p}
\end{bmatrix}.
\end{equation}
Here $\Delta_{12}\in\mathbb{R}^{(m+q)\times p}$ and $\Delta_{21}\in\mathbb{R}^{p\times(\ell+q)}$.
The linear space consisting of all $\Delta$ of the above form will be denoted by $\mathcal{M}_{q,p}$.

\begin{lemma}
\label{lemma:hessian_augmented_general}
Let $q\in\mathbb{Z}_{\geq 0}$ satisfy $\mathcal{C}_q\neq\varnothing$, and let $p\in\mathbb{Z}_{>0}$ be arbitrary. Given $\mK\in\mathcal{C}_q$ and $\Lambda\in\mathfrak{H}_p$, there exists a bilinear form $\mathscr{B}_{\mK,\Lambda}:\mathbb{R}^{(m+q)\times p}\times\mathbb{R}^{p\times(\ell+q)}\rightarrow\mathbb{R}$ such that
\[
D^2 J_{q+p}(\mK\oplus\Lambda)[\Delta,\Delta]=\mathscr{B}_{\mK,\Lambda}\!\left[\Delta_{12},\Delta_{21}\right]
\]
for all
\[
\Delta = \begin{bmatrix}
0_{(m+q)\times(\ell+q)} & \Delta_{12} \\
\Delta_{21} & 0_{p\times p}
\end{bmatrix}\in\mathcal{M}_{q,p}.
\]
\end{lemma}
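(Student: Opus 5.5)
The plan is to expand $J_{q+p}(\mK\oplus\Lambda+t\Delta)$ in $t$ to second order and show that the coefficient of $t^2$ contains no term quadratic in $\Delta_{12}$ alone or in $\Delta_{21}$ alone. What remains is then automatically a bilinear form in $(\Delta_{12},\Delta_{21})$. Since $\mathcal{C}_{q+p}$ is open and $J_{q+p}$ is real-analytic, $D^2J_{q+p}(\mK\oplus\Lambda)[\Delta,\Delta]$ is a quadratic form in $\Delta\in\mathcal{M}_{q,p}$. Writing $\Delta=\Delta^{(12)}+\Delta^{(21)}$, where the two summands keep only the respective off-diagonal block, we get
\[
D^2J[\Delta,\Delta]=D^2J[\Delta^{(12)},\Delta^{(12)}]+2D^2J[\Delta^{(12)},\Delta^{(21)}]+D^2J[\Delta^{(21)},\Delta^{(21)}].
\]
The cross term is bilinear in $(\Delta_{12},\Delta_{21})$, so it suffices to show the two pure terms vanish. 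We then define $\mathscr{B}_{\mK,\Lambda}[\Delta_{12},\Delta_{21}]\coloneq 2D^2J_{q+p}(\mK\oplus\Lambda)[\Delta^{(12)},\Delta^{(21)}]$.

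To show the pure terms vanish, first describe the perturbed policy. Perturbing only by $t\Delta^{(12)}$ gives
\[
\left[\begin{array}{c:cc} 0 & C_\mK & tE_1\\ \hdashline B_\mK & A_\mK & tE_2\\ 0&0&\Lambda\end{array}\right],
\]
where $\Delta_{12}=\begin{bmatrix}E_1\\E_2\end{bmatrix}$ is partitioned conformally. In this policy the appended state $\xi_2$ has zero input coupling and evolves autonomously as $\dot\xi_2=\Lambda\xi_2$. One point needs care: the block structure of $\mathcal{V}_{q+p}$ forces the top-left $m\times\ell$ block to be zero, so $\Delta$ must be read within $\mathcal{V}_{q+p}$ and $\Delta_{12}$ only feeds $C$ and $A$ entries. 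The new state is therefore uncontrollable. Because $\Lambda$ is Hurwitz, the policy stays stabilizing for every $t$, and its controller transfer matrix is exactly $C_\mK(sI-A_\mK)^{-1}B_\mK$. Hence $J_{q+p}(\mK\oplus\Lambda+t\Delta^{(12)})=J_q(\mK)$ for all $t\in\mathbb{R}$. The same holds symmetrically for $t\Delta^{(21)}$, which adds $tF_1$ to the input coupling and $tF_2$ to the $\xi_2$-row of $A$, where $\Delta_{21}=\begin{bmatrix}F_1&F_2\end{bmatrix}$. The new state then receives input but never feeds back into $(u,\xi_1)$, so it is unobservable and the transfer matrix is again unchanged. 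In both cases the cost is constant along the line, so the second derivative in $t$ at $t=0$ vanishes. This yields $D^2J[\Delta^{(12)},\Delta^{(12)}]=D^2J[\Delta^{(21)},\Delta^{(21)}]=0$.

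For stability along these lines, the closed-loop matrix is block triangular with diagonal blocks $A_{\mathrm{cl},\mK}$ and $\Lambda$, as in \cref{lemma:Kalman_canonical_diagonal_Hurwitz}. This closed-loop stability argument, and the observation that a block-triangular controller realization has the same transfer function, are the only substantive points. The main (mild) obstacle is bookkeeping the block partition of $\mathcal{V}_{q+p}$ so that the constant-cost curves are well defined for all $t$.
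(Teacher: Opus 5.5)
Your proposal is correct and follows essentially the same route as the paper. You split $\Delta$ into its $\Delta_{12}$-only and $\Delta_{21}$-only parts. Along each of these pure directions, the appended state is uncontrollable (resp.\ unobservable), so the controller transfer matrix and hence the cost stay constant, and the pure Hessian terms vanish. You then take $\mathscr{B}_{\mK,\Lambda}$ to be twice the cross term, exactly as the paper does with the projections $\mathscr{P}_{12}$ and $\mathscr{P}_{21}$. Your explicit block-triangular check that these perturbed policies remain stabilizing for every $t$ is a small detail the paper leaves implicit; it does not change the argument.
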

\begin{proof}
Denote $\mathscr{H}=D^2 J_{q+p}(\mK\oplus\Lambda)$ temporarily for simplicity. 
Define the projection operators $\mathscr{P}_{12},\mathscr{P}_{21}:\mathcal{M}_{q,p}\rightarrow\mathcal{M}_{q,p}$ by
\[
\mathscr{P}_{12}\Delta = \begin{bmatrix}
0 & \Delta_{12} \\
0 & 0
\end{bmatrix},
\quad
\mathscr{P}_{21}\Delta = \begin{bmatrix}
0 & 0 \\
\Delta_{21} & 0
\end{bmatrix},
\quad\forall \Delta = \begin{bmatrix}
0 & \Delta_{12} \\
\Delta_{21} & 0
\end{bmatrix}\in\mathcal{M}_{q,p}.
\]
Also, for each $\Delta\in\mathcal{M}_{q,p}$, denote
\[
\widehat{J}_\Delta(t) = J_{q+p}(\mK\oplus\Lambda+t\Delta)
\]
for $t\in\mathbb{R}$ in a neighborhood of $0$ such that $\mK\oplus\Lambda+t\Delta\in\mathcal{C}_{q+p}$. We then have $\widehat{J}''_\Delta(0)=\mathscr{H}[\Delta,\Delta]$.

We first note that, if $\mathscr{P}_{21}\Delta=0$ (i.e., the $\Delta_{21}$ block of $\Delta$ in~\cref{eq:hessian_Delta_partition} is zero), then
\[
\mK\oplus\Lambda+\Delta = \begin{bmatrix}
0 & C_{\mK} & \ast \\
B_{\mK} & A_{\mK} & \ast \\
0 & 0 & \Lambda
\end{bmatrix},
\]
where we use $\ast$ to represent possibly nonzero blocks whose specific values are not relevant. Consequently, the transfer matrix from $y$ to $u$ corresponding to the policy $\mK\oplus\Lambda+\Delta$ is given by\footnote{For the policy $\mK=0_{m\times\ell}$, this transfer matrix is simply $0_{m\times\ell}$.}
\[
\begin{aligned}
 C_{\mK\oplus\Lambda+\Delta}(sI_{q+p}-A_{\mK\oplus\Lambda+\Delta})^{-1}B_{\mK\oplus\Lambda+\Delta} 
={} & \begin{bmatrix}
C_{\mK} & \ast
\end{bmatrix}
\left(\begin{bmatrix}
sI_q-A_{\mK} & \ast \\ 0 & sI_p-\Lambda
\end{bmatrix}\right)^{-1}\begin{bmatrix}
B_{\mK} \\ 0
\end{bmatrix} \\
={} & \begin{bmatrix}
C_{\mK} & \ast
\end{bmatrix}
\begin{bmatrix}
(sI_q-A_{\mK})^{-1} & \ast \\ 0 & (sI_p-\Lambda)^{-1}
\end{bmatrix}\begin{bmatrix}
B_{\mK} \\ 0
\end{bmatrix} \\
={} & C_{\mK}(sI_q-A_{\mK})^{-1}B_{\mK},
\end{aligned}
\]
which is independent of $\Delta_{12}$. Thus, when $\mathscr{P}_{21}\Delta=0$, adding the perturbation $\Delta$ to the policy $\mK\oplus\Lambda$ does not change the corresponding objective value. As a result, for every $\Delta\in\mathcal{M}_{q,p}$, we have
\[
\widehat{J}_{\mathscr{P}_{12}\Delta}(t)
=J_{q+p}(\mK\oplus\Lambda)
=J_q(\mK),
\]
which further leads to
$
\mathscr{H}[\mathscr{P}_{12}\Delta,\mathscr{P}_{12}\Delta]=
\widehat{J}_{\mathscr{P}_{12}\Delta}''(0)
=0
$ 
for all $\Delta\in\mathcal{M}_{q,p}$. 

Similarly, we can show that when $\mathscr{P}_{12}\Delta=0$, the transfer matrix from $y$ to $u$ corresponding to the policy $\mK\oplus\Lambda+\Delta$ is still $C_{\mK}(sI_q-A_{\mK})^{-1}B_{\mK}$, independent of $\Delta_{21}$. As a result, we will have
$
\mathscr{H}[\mathscr{P}_{21}\Delta,\mathscr{P}_{21}\Delta]=0 
$ 
for every $\Delta\in\mathcal{M}_{q,p}$.

Summarizing the previous results, we have
\[
\begin{aligned}
\mathscr{H}[\Delta,\Delta]
={} & \mathscr{H}[(\mathscr{P}_{12}+\mathscr{P}_{21})\Delta,(\mathscr{P}_{12}+\mathscr{P}_{21})\Delta] \\
={} & \mathscr{H}[\mathscr{P}_{12}\Delta,\mathscr{P}_{12}\Delta]+\mathscr{H}[\mathscr{P}_{21}\Delta,\mathscr{P}_{21}\Delta]+2\mathscr{H}[\mathscr{P}_{12}\Delta,\mathscr{P}_{21}\Delta] \\
={} & 2\mathscr{H}[\mathscr{P}_{12}\Delta,\mathscr{P}_{21}\Delta].
\end{aligned}
\]
Letting
\[
\mathscr{B}_{\mK,\Lambda}[\Delta_{12},\Delta_{21}]=2\mathscr{H}\!\left[
\begin{bmatrix}
0 & \Delta_{12} \\
0 & 0
\end{bmatrix},
\begin{bmatrix}
0 & 0 \\
\Delta_{21} & 0
\end{bmatrix}
\right]
\]
completes the proof.
\end{proof}
\begin{corollary}
\label{corollary:vanishing_hessian_augmented}
Let $q\in\mathbb{Z}_{\geq 0}$ satisfy $\mathcal{C}_q\neq\varnothing$, and let $p\in\mathbb{Z}_{>0}$ be arbitrary. Given $\mK\in\mathcal{C}_q$ and $\Lambda\in\mathfrak{H}_p$,
we have $
D^2 J_{q+p}(\mK\oplus\Lambda)[\Delta,\Delta]= 0
$ 
for all $\Delta\in\mathcal{M}_{q,p}$ if one of the following holds:
\begin{enumerate}
\item $
D^2 J_{q+p}(\mK\oplus\Lambda)[\Delta,\Delta]\geq 0
$
for all $\Delta\in\mathcal{M}_{q,p}$.
\item $
D^2 J_{q+p}(\mK\oplus\Lambda)[\Delta,\Delta]\leq 0
$
for all $\Delta\in\mathcal{M}_{q,p}$.
\end{enumerate}
\end{corollary}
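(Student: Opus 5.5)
The plan is to reduce the statement to an elementary fact about bilinear forms, using \Cref{lemma:hessian_augmented_general}. By that lemma there is a bilinear form $\mathscr{B}=\mathscr{B}_{\mK,\Lambda}$ on $\mathbb{R}^{(m+q)\times p}\times\mathbb{R}^{p\times(\ell+q)}$ with
\[
D^2 J_{q+p}(\mK\oplus\Lambda)[\Delta,\Delta]=\mathscr{B}[\Delta_{12},\Delta_{21}]
\]
for every $\Delta\in\mathcal{M}_{q,p}$. So the quadratic form of the Hessian, restricted to $\mathcal{M}_{q,p}$, is a bilinear expression in the two independent off-diagonal blocks. The key observation is that flipping the sign of one block flips the sign of the value. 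If $\Delta\in\mathcal{M}_{q,p}$ has blocks $(\Delta_{12},\Delta_{21})$, then the matrix $\Delta'$ with blocks $(-\Delta_{12},\Delta_{21})$ also lies in $\mathcal{M}_{q,p}$, because $\mathcal{M}_{q,p}$ is a linear space whose two blocks can be chosen independently. Bilinearity then gives
\[
D^2 J_{q+p}(\mK\oplus\Lambda)[\Delta',\Delta']=-\mathscr{B}[\Delta_{12},\Delta_{21}]=-D^2 J_{q+p}(\mK\oplus\Lambda)[\Delta,\Delta].
\]

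Now suppose condition 1 holds, so the quadratic form is nonnegative on all of $\mathcal{M}_{q,p}$. Applying it to both $\Delta$ and $\Delta'$ gives $\mathscr{B}[\Delta_{12},\Delta_{21}]\ge 0$ and $-\mathscr{B}[\Delta_{12},\Delta_{21}]\ge 0$. Hence $D^2 J_{q+p}(\mK\oplus\Lambda)[\Delta,\Delta]=0$. Condition 2 is handled the same way with the inequalities reversed. Since $\Delta$ was arbitrary in $\mathcal{M}_{q,p}$, the conclusion follows.

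There is essentially no obstacle here. The one point to check is that the sign flip keeps us inside $\mathcal{M}_{q,p}$, and this is immediate because the diagonal blocks of $\Delta$ are zero and the off-diagonal blocks are unconstrained. Equivalently, one could note that a quadratic form of the shape $(x,y)\mapsto\mathscr{B}[x,y]$ is semidefinite only when $\mathscr{B}=0$.
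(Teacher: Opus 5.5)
Your proof is correct and matches the paper's: it also invokes \cref{lemma:hessian_augmented_general} and uses the direction $\Delta'$ with blocks $(-\Delta_{12},\Delta_{21})$ to flip the sign of $\mathscr{B}_{\mK,\Lambda}[\Delta_{12},\Delta_{21}]$. The only difference is presentation: the paper argues by contrapositive (a nonzero value forces a value of the opposite sign), whereas you combine the two inequalities directly.
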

\begin{proof}
We shall prove the following equivalent statement: If $D^2 J_{q+p}(\mK\oplus\Lambda)[\Delta,\Delta]\neq 0$ for some $\Delta\in\mathcal{M}_{q,p}$, then there exists some $\Delta'\in\mathcal{M}_{q,p}$ such that $D^2 J_{q+p}(\mK\oplus\Lambda)[\Delta',\Delta']\neq 0$ with a sign different from $D^2 J_{q+p}(\mK\oplus\Lambda)[\Delta,\Delta]$.

Suppose $D^2 J_{q+p}(\mK\oplus\Lambda)[\Delta,\Delta]>0$ for some $\Delta\in\mathcal{M}_{q,p}$. By \cref{lemma:hessian_augmented_general}, we have  $\mathscr{B}_{\mK,\Lambda}\!\left(\Delta_{12},\Delta_{21}\right)>0$. By letting
\[
\Delta' = \begin{bmatrix}
0 & -\Delta_{12} \\ \Delta_{21} & 0
\end{bmatrix},
\]
we then get
\[
D^2 J_{q+p}(\mK\oplus\Lambda)[\Delta',\Delta'] = \mathscr{B}_{{\mK},\Lambda}\!\left(-\Delta_{12},\Delta_{21}\right) = -\mathscr{B}_{{\mK},\Lambda}\!\left(\Delta_{12},\Delta_{21}\right)<0.
\]
The case where $D^2 J_{q+p}(\mK\oplus\Lambda)[\Delta,\Delta]<0$ for some $\Delta\in\mathcal{M}_{q,p}$ can be proved similarly.
\end{proof}

To prove \cref{lemma:vanishing_hessian_augmented_scalar}, we need a lemma on the zero set of real analytic functions, and also a lemma establishing the connectedness and openness of the set of Hurwitz stable matrices.
\begin{lemma}[\cite{mityagin2020zero}]
\label{lemma:zero_set_analytic_function}
Let $f:\mathcal{D}\rightarrow\mathbb{R}$ be a real analytic function, with $\mathcal{D}$ being connected and open in $\mathbb{R}^d$. If $f$ is not identically zero on $\mathcal{D}$, then $\setc*{x\in\mathcal{D}}{f(x)=0}$ has Lebesgue measure zero.
\end{lemma}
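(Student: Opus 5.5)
We will prove the lemma by a local slicing argument combined with Fubini's theorem; no induction on $d$ is needed. The zero set $Z\coloneq\setc*{x\in\mathcal{D}}{f(x)=0}$ is relatively closed in $\mathcal{D}$, hence Borel measurable. Since $\mathcal{D}$ is a countable union of small open boxes, it suffices to show that every $x_0\in\mathcal{D}$ has a neighborhood $U$ with $Z\cap U$ of measure zero.

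\emph{Step 1 (no point of infinite-order vanishing).} Let $N\coloneq\setc*{x\in\mathcal{D}}{\partial^\alpha f(x)=0\text{ for all multi-indices }\alpha}$.
\begin{itemize}
\item $N$ is relatively closed in $\mathcal{D}$, since each condition $\partial^\alpha f(x)=0$ is closed.
\item $N$ is open: at a point of $N$ the Taylor series of $f$ vanishes, and $f$ equals its Taylor series on a small ball, so $f\equiv0$ on that ball.
\end{itemize}
Since $\mathcal{D}$ is connected and $f\not\equiv 0$, we conclude $N=\varnothing$.

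\emph{Step 2 (choice of a good direction).} Fix $x_0\in\mathcal{D}$. By Step 1 there is a smallest $k\geq 0$ such that some partial derivative of order $k$ of $f$ is nonzero at $x_0$. The $k$-th order Taylor term
\[
P(v)=\sum_{|\alpha|=k}\frac{\partial^\alpha f(x_0)}{\alpha!}\,v^\alpha
\]
is then a nonzero homogeneous polynomial. A nonzero polynomial cannot vanish on all of $\mathbb{R}^d$, so there is a unit vector $v$ with $P(v)\neq0$. Because all lower-order derivatives vanish at $x_0$, this gives $\frac{d^k}{dt^k}f(x_0+tv)\big|_{t=0}=k!\,P(v)\neq 0$. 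Orthogonal changes of coordinates preserve Lebesgue measure and real analyticity, so we may assume $v=e_d$. Thus $\partial_d^k f(x_0)\neq0$.

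\emph{Step 3 (slicing).} By continuity, $\partial_d^k f\neq 0$ on a box $U=U'\times(a,b)\subseteq\mathcal{D}$ around $x_0$. For each fixed $x'\in U'$, the one-variable function $g_{x'}(t)=f(x',t)$ is real analytic on $(a,b)$ and is not identically zero, since its $k$-th derivative never vanishes. By the one-dimensional identity theorem, the zeros of $g_{x'}$ are isolated in $(a,b)$, hence countable, and so have one-dimensional measure zero. Applying Fubini (Tonelli) to the indicator function of $Z\cap U$ shows that $Z\cap U$ has $d$-dimensional measure zero. Taking a countable subcover of $\mathcal{D}$ by such boxes finishes the proof.

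The main obstacle is Step 2. A naive slicing along a fixed coordinate axis can fail, because $f$ may vanish identically on entire coordinate lines. The choice of a direction $v$ with $P(v)\neq 0$ at each point is what rules this out locally. That choice depends on Step 1, and Step 1 is the only place where the connectedness of $\mathcal{D}$ is used.
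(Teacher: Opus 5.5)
The paper does not prove this lemma; it imports it from Mityagin's note. There the argument is an induction on $d$. If $Z$ had positive measure in a cube, Fubini along a \emph{fixed} coordinate together with the induction hypothesis gives $f(\cdot,t)\equiv 0$ on the cube slices for a positive-measure set of $t$. The one-variable identity theorem then forces $f\equiv 0$ on the cube, and connectedness spreads this to $\mathcal{D}$.

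Your proof is correct and complete, and it takes a genuinely different route. Instead of a fixed slicing direction plus induction, you choose at each point a direction $v$ along which $f$ has a nonzero directional derivative of some order $k$, read off from the degree-$k$ Taylor form. A single application of Tonelli then suffices.

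The gain is that your argument separates cleanly where each hypothesis enters. Analyticity and connectedness are used only in Step~1, to exclude points where $f$ vanishes to infinite order. After that, Step~3 does not even need the identity theorem. Since $\partial_v^k f\neq 0$ on the box, Rolle's theorem gives at most $k$ zeros of $f$ on each segment parallel to $v$.

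So your argument actually proves more: every $C^\infty$ function without flat points has a Lebesgue-null zero set. It also yields local information: near $x_0$, lines in direction $v$ meet $Z$ in at most $k$ points. The induction proof avoids choosing a direction and the nonvanishing-polynomial step, but it invokes analyticity at every level of the induction.

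Two harmless remarks. The identity $\frac{d^k}{dt^k}f(x_0+tv)|_{t=0}=k!\,P(v)$ holds for any $C^k$ function, whether or not the lower-order derivatives vanish. Minimality of $k$ is also unnecessary: any multi-index $\alpha$ with $\partial^\alpha f(x_0)\neq 0$ gives a nonzero homogeneous form of degree $|\alpha|$, and the rest of the argument goes through unchanged.
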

\begin{lemma}[\cite{duan1998note}]
For any positive integer $p$, the set $\mathfrak{H}_p$ is connected and open in $\mathbb{R}^{p\times p}$.
\end{lemma}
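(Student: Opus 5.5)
The plan is to prove the two properties separately, each with an elementary argument, rather than relying on any structure of the LQG problem.

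\emph{Openness.} I would use the Lyapunov characterization. A matrix $A\in\mathbb{R}^{p\times p}$ is Hurwitz stable if and only if there exists $P\in\mathbb{S}^p_{++}$ with $A^\tr P+PA\prec 0$; for instance, take $P=\int_0^\infty e^{A^\tr t}e^{At}\,dt$, which gives $A^\tr P+PA=-I_p$. Fix this $P$ for a given $A\in\mathfrak{H}_p$. The map $A'\mapsto A'^\tr P+PA'$ is continuous, and the set of negative definite matrices is open in $\mathbb{S}^p$, since the largest eigenvalue of a symmetric matrix depends continuously on its entries. Hence $A'^\tr P+PA'\prec 0$ for every $A'$ in a small neighborhood of $A$. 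By the converse Lyapunov theorem, every such $A'$ is Hurwitz stable. If $x^\ast$ is an eigenvector of $A'$ for eigenvalue $\mu$, then
\[
2\operatorname{Re}(\mu)\, x^\ast P x = x^\ast(A'^\tr P+PA')x<0,
\]
so $\operatorname{Re}\mu<0$. Alternatively, one could invoke continuity of the roots of the characteristic polynomial, but the Lyapunov route avoids any root-perturbation theory.

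\emph{Connectedness.} I would show that $\mathfrak{H}_p$ is star-shaped with respect to $-I_p$, which gives path-connectedness and hence connectedness. Given $A\in\mathfrak{H}_p$, consider the segment
\[
A(t)\coloneq(1-t)A-tI_p,\qquad t\in[0,1].
\]
Since $-tI_p$ commutes with $A$, the eigenvalues of $A(t)$ are exactly $(1-t)\mu-t$ for $\mu\in\operatorname{eig}(A)$. Their real parts are $(1-t)\operatorname{Re}\mu-t$. For $t\in[0,1)$ the first term is negative and the second is nonpositive, and at $t=1$ the value is $-1$. So every $A(t)$ is Hurwitz stable, and the segment lies in $\mathfrak{H}_p$. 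Joining any two elements of $\mathfrak{H}_p$ through $-I_p$ then yields a continuous path between them.

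Neither step presents a real obstacle; the only point needing care is the eigenvalue computation for $A(t)$, which relies on $A$ commuting with the identity. I note that $\mathfrak{H}_p$ is not convex for $p\ge 2$, so the star-shaped argument, rather than a convexity argument, is what I would use.
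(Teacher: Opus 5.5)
Your proof is correct and complete. The paper gives no proof of this lemma; it imports it from \cite{duan1998note}, where connectedness comes from a Lyapunov-type factorization. A matrix $A$ is Hurwitz if and only if $A=GN$ with $G\in\mathbb{S}^p_{++}$ and $N+N^\tr\prec 0$: take $G=X^{-1}$ and $N=XA$ for a Lyapunov certificate $X$ of $A$. Hence $\mathfrak{H}_p$ is the continuous image of a convex parameter set.

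Your route is more elementary. Openness follows from a single fixed certificate $P$, which even gives an explicit radius: $A'$ is Hurwitz whenever $\|A'-A\|_2<1/(2\|P\|_2)$. Connectedness follows from star-shapedness about $-I_p$, which needs only the spectral mapping $\operatorname{eig}((1-t)A-tI_p)=\{(1-t)\mu-t:\mu\in\operatorname{eig}(A)\}$. A small side benefit is that star-shapedness passes directly to translates of products. So your argument also covers $\mathcal{W}^{\mathrm{H}}_\mK$, which is a translate of $\prod_{i\in\mathcal{I}_\mK}\mathfrak{H}_{p_{\mK,i}}$ and is where the paper actually uses the lemma. The factorization approach, by contrast, gives a global parameterization of $\mathfrak{H}_p$ that is more structure than this lemma needs.

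One cosmetic slip: in the openness step, the eigenvector of $A'$ should be $x$ (possibly complex), not $x^\ast$. The identity then uses $x^\ast A'^\tr=(A'x)^\ast$, which holds because $A'$ is real.
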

\begin{proof}[Proof of \cref{lemma:vanishing_hessian_augmented_scalar}]
We have $\mathcal{I}_\mK\neq\varnothing$ because $\mK$ is uncontrollable or unobservable.
Fix an arbitrary $\Delta\in\mathcal{M}_{r_\mK,q-r_\mK}$. Let
\[
\begin{aligned}
\mathcal{W}_\mK^{\mathrm{H}} ={} & \setc*{\Theta\in\mathcal{W}_\mK}{\mK(\Theta,0)\in\mathcal{C}_q} \\
={} & \setc*{\Theta\in\mathcal{W}_\mK}{
A_{\mK,i}+\Theta_i\text{ is Hurwitz stable for all }i\in\mathcal{I}_\mK
},
\end{aligned}
\]
and
\[
h(\Theta) = D^2 J_q(\mK(\Theta,0))[\Delta,\Delta],\qquad\forall\Theta\in\mathcal{W}_\mK^{\mathrm{H}}.
\]
The fact that $J_q$ is real analytic over $\mathcal{C}_q$ implies that $h$ is real analytic over $\Theta\in\mathcal{W}_\mK^{\mathrm{H}}$. In addition, the set $\mathcal{W}_\mK^{\mathrm{H}}$ is open and connected since $\mathfrak{H}_{p}$ is open and connected for any positive integer $p$. Then, noting that for any $\Theta\in\mathcal{W}_\mK^{\mathrm{H}}$, the policy $\mK(\Theta,0)$ can be written as $\mathfrak{m}(\mK)\oplus\Lambda$ for some $\Lambda\in\mathfrak{H}_{q-r_\mK}$, by \cref{lemma:hessian_neighborhood_Kalman} and \cref{corollary:vanishing_hessian_augmented}, we have
\[
h(\Theta)=0,\qquad\forall\Theta\in\mathcal{O}_\mK.
\]
Therefore, the set $\setc*{\Theta\in\mathcal{W}_\mK^{\mathrm{H}}}{h(\Theta)=0}$ has a non-empty open subset $\mathcal{O}_\mK$ and consequently has a positive Lebesgue measure. By \cref{lemma:zero_set_analytic_function}, we see that $h$ is identically zero on $\mathcal{W}^{\mathrm{H}}_\mK$. We complete the proof by noting that
\[
D^2 J_q(\mathfrak{m}(\mK)\oplus\lambda I_{q-r_\mK})[\Delta,\Delta]
=h(\Theta_\lambda)
\]
with $\Theta_\lambda=(-A_{\mK,i}+\lambda I_{p_{\mK,i}})_{i\in\mathcal{I}_\mK}\in \mathcal{W}^{\mathrm{H}}_\mK$ for all $\lambda<0$.
\end{proof}

\subsection{Proofs of \cref{theorem:nonminimal-local-minimum-global,theorem:hessian-negative-direction}}
\label{section:global-optimality}

The final technical ingredients of our proofs involve a frequency-domain characterization of the global optimality of $\mK$ and an explicit formula of the Hessian at $\mK\oplus \lambda I_p$. 
\begin{theorem}
\label{theorem:global_optimality_RK}
Let $q\in\mathbb{Z}_{\geq 0}$ satisfy $\mathcal{C}_q\neq\varnothing$.
Given $\mK=\begin{bmatrix}
0 & C_\mK \\ B_\mK & A_\mK
\end{bmatrix}\in\mathcal{C}_q$, let $X_\mK$ and $Y_\mK$ be solutions to the Lyapunov equations:
\begin{subequations} \label{eq:closed-loop_Lyapunov}
\begin{align}
A_{\mathrm{cl},\mK}X_\mK + X_\mK A_{\mathrm{cl},\mK}^{\tran}
+B_{\mathrm{cl},\mK} B_{\mathrm{cl},\mK}^{\tran} ={} & 0,  
\label{eq:closed-loop_Lyapunov_X}\\
A_{\mathrm{cl},\mK}^{\tran} Y_\mK + Y_\mK A_{\mathrm{cl},\mK} + C_{\mathrm{cl},\mK}^{\tran} C_{\mathrm{cl},\mK} ={} & 0.
\label{eq:closed-loop_Lyapunov_Y}
\end{align}
\end{subequations}
Furthermore, define the transfer matrix
\[
\mathbf{R}_\mK(s) \coloneq \left(\begin{bmatrix}
0_{m\times n} & \!\!RC_\mK \\
0_{q\times n} & \!\!0_{q\times q}
\end{bmatrix}+\begin{bmatrix}
B^\tran & \!\!0_{m\times q} \\ 0_{q\times n} & \!\!I_q
\end{bmatrix}Y_\mK\right)\!\left(sI-A_{\mathrm{cl},\mK}\right)^{-1}
\!\left(\begin{bmatrix}
0_{n\times \ell} & \!\!0_{n\times q} \\
B_\mK V & \!\!0_{q\times q}
\end{bmatrix}
+X_\mK \begin{bmatrix}
C^\tran & \!\!0_{n\times q} \\ 0_{q\times \ell} & \!\!I_q
\end{bmatrix}\right).
\]
Then $\mK$ is a globally optimal policy if and only if
$\mathbf{R}_\mK(s) \equiv 0$.
\end{theorem}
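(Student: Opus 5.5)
We will prove the theorem with the Youla parameterization, centered at $\mK$ itself. All stabilizing controllers of arbitrary order are then parameterized affinely by a stable transfer matrix $\mathbf{Q}\in\mathcal{RH}_\infty$, with $\mathbf{Q}=0$ corresponding to $\mK$. The closed-loop map from $d$ to $z$ is affine in $\mathbf{Q}$:
\[
\mathbf{T}_{zd}(\mathbf{Q})=\mathbf{T}_1+\mathbf{T}_2\mathbf{Q}\mathbf{T}_3 .
\]
We will build $\mathbf{T}_1,\mathbf{T}_2,\mathbf{T}_3$ directly from the closed-loop realization $(A_{\mathrm{cl},\mK},B_{\mathrm{cl},\mK},C_{\mathrm{cl},\mK})$, so that no coprime factorization is needed and the controller order plays no role. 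Concretely, the free parameter enters through an extra channel that injects an input $\hat u$ into the plant input and the controller-state derivative. It reads an output $\hat y$ formed from the innovation $y-C\hat x$ and the controller state. The relevant maps are
\[
\mathbf{T}_2=C_{\mathrm{cl},\mK}(sI-A_{\mathrm{cl},\mK})^{-1}\begin{bmatrix}B&0\\0&I_q\end{bmatrix}+\begin{bmatrix}0\\ R^{1/2}\,[\,I\ \ 0\,]\end{bmatrix}\quad\text{(with appropriate feedthrough)},
\]
and, dually, $\mathbf{T}_3$ from $d$ to $\hat y=\begin{bmatrix}C&0\\0&I_q\end{bmatrix}\begin{bmatrix}x\\ \xi\end{bmatrix}+\begin{bmatrix}0&V^{1/2}\\0&0\end{bmatrix}d$. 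I will verify in the appendix-style computation that every stabilizing controller of any order arises from some $\mathbf{Q}\in\mathcal{RH}_\infty$ in this way, and that every such $\mathbf{Q}$ (with strictly proper closed loop) gives one. This follows the standard Youla argument of \cite{li2025policy}, specialized so that the parameter space is independent of $q$.

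\textbf{Step 1: optimality condition.} Given this parameterization, $J^\star=\inf_{\mathbf{Q}}\|\mathbf{T}_1+\mathbf{T}_2\mathbf{Q}\mathbf{T}_3\|_{\mathcal{H}_2}^2$ is a convex problem over a linear space. Hence $\mathbf{Q}=0$, i.e.\ $\mK$, is globally optimal if and only if the directional derivative vanishes in every direction $\mathbf{Q}\in\mathcal{RH}_\infty$:
\[
\langle \mathbf{T}_1,\mathbf{T}_2\mathbf{Q}\mathbf{T}_3\rangle_{\mathcal{H}_2}=0\quad\forall\mathbf{Q}.
\]
Equivalently, $\langle \mathbf{T}_2^\sim\mathbf{T}_1\mathbf{T}_3^\sim,\mathbf{Q}\rangle=0$. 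This in turn says that the stable projection $\Pi_{\mathcal{H}_2}\big(\mathbf{T}_2^\sim\mathbf{T}_1\mathbf{T}_3^\sim\big)$ vanishes. To justify this equivalence I will use density of strictly proper stable rational $\mathbf{Q}$ in $\mathcal{H}_2$, together with the fact that the pairing can be rewritten as an $\mathcal{L}_2$ inner product.

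\textbf{Step 2: identify the projection with $\mathbf{R}_\mK$.} The product $\mathbf{T}_2^\sim\mathbf{T}_1\mathbf{T}_3^\sim$ has a state-space realization with both $A_{\mathrm{cl},\mK}$ and $-A_{\mathrm{cl},\mK}^\tran$ dynamics. I will decouple the stable and antistable parts by the standard similarity transformation that uses the Lyapunov solutions $X_\mK$ and $Y_\mK$ of \cref{eq:closed-loop_Lyapunov}; this is the usual "stable/antistable splitting" identity for products of the form $\mathbf{G}_1^\sim\mathbf{G}_2$. After this step, the stable part should be exactly $(\cdot)(sI-A_{\mathrm{cl},\mK})^{-1}(\cdot)$ with the left factor $\begin{bmatrix}0&RC_\mK\\0&0\end{bmatrix}+\operatorname{diag}(B^\tran,I)Y_\mK$ and the right factor $\begin{bmatrix}0&0\\B_\mK V&0\end{bmatrix}+X_\mK\operatorname{diag}(C^\tran,I)$. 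The feedthrough terms $R C_\mK$ and $B_\mK V$ come from the $R^{1/2}$ and $V^{1/2}$ direct paths. Up to conjugation of the variable, this stable part is $\mathbf{R}_\mK(s)$, which proves the theorem.

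The main obstacle is the setup, not the algebra. The centered parameterization must cover all orders and all stabilizing controllers, and it must be chosen so that the resulting stable projection matches the stated $\mathbf{R}_\mK$ exactly, including the extra $I_q$ channels. Careful bookkeeping of the feedthrough terms, including the sign conventions between $\mathbf{T}_2^\sim$ and $\mathbf{T}_3^\sim$, is the delicate part. If the $I_q$ channels prove troublesome, I would first establish the result with the classical observer-based Youla parameterization (\cite{li2025policy}) and then show that the extra channels only reparameterize the same set of $\mathbf{Q}$.
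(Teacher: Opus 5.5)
Your setup is the paper's: the centered Youla parameterization that injects through $\mathfrak{B}=\operatorname{diag}(B,I_q)$ and reads $(y,\xi)$ through $\mathfrak{C}=\operatorname{diag}(C,I_q)$. It is followed by convexity of $\mathbf{Q}\mapsto\|\mathbf{M}_{11}+\mathbf{M}_{12}\mathbf{Q}\mathbf{M}_{21}\|_{\mathcal{H}_2}^2$, and then by the similarity built from $X_\mK,Y_\mK$. That similarity splits $\boldsymbol{\Psi}_\mK=\mathbf{M}_{12}^\sim\mathbf{M}_{11}\mathbf{M}_{21}^\sim$ into an antistable part plus exactly $\mathbf{R}_\mK(s)$; no conjugation of the variable is involved. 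Your displayed $\hat y$ is correct, but drop the phrase ``innovation $y-C\hat x$'', since a controller of order $q\neq n$ carries no estimate $\hat x$.

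\textbf{The gap.} It is in Step~1, in the claimed equivalence between ``the derivative vanishes in every admissible direction'' and ``the stable projection vanishes.'' The admissible directions are not strictly proper. They have the form $\mathbf{Q}=\mathbf{Q}_0+D$, with $\mathbf{Q}_0\in\mathcal{RH}_2$ and $D$ a constant whose $y\to u$ block is zero; this is what your ``strictly proper closed loop'' condition amounts to, since $\mathfrak{D}_{12}D\mathfrak{D}_{21}$ only sees that block. These feedthrough directions cannot be dropped without an argument. Any static perturbation $\mK+D$ of the nominal realization has a Youla parameter with $\mathbf{Q}(\infty)=D\neq0$. Likewise, the parameter that the covering argument naturally assigns to a competitor $\widehat{\mK}$ of another order (cancel $\mK$, insert $\widehat{\mK}$) has nonzero feedthrough.

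\textbf{Necessity is fine.} No density argument is needed: test with $\mathbf{Q}_0=\mathbf{R}_\mK$, which is admissible, to get $\|\mathbf{R}_\mK\|_{\mathcal{H}_2}^2=0$.

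\textbf{Sufficiency is incomplete.} Here $\mathbf{R}_\mK\equiv0$ only yields $\mathcal{L}_\mK(\mathbf{Q}_0)=\langle\boldsymbol{\Psi}_{\mK,-},\mathbf{Q}_0\rangle_{\mathcal{L}_2}=0$ for strictly proper $\mathbf{Q}_0$. A nonzero constant $D$ is not in $\mathcal{L}_2$, so stable/antistable orthogonality does not apply to it. Density of rational functions in $\mathcal{H}_2$ does not help either, because $D$ does not lie in $\mathcal{H}_2$ at all.

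\textbf{How to close it.} You need a separate limiting step, for example the paper's. The function $\mathbf{D}_\alpha(s)=\frac{\alpha}{s+\alpha}D$ lies in $\mathcal{RH}_2$, so $\mathcal{L}_\mK(\mathbf{D}_\alpha)=0$. Then $\mathcal{L}_\mK(\mathbf{D}_\alpha)\to\mathcal{L}_\mK(D)$ as $\alpha\to\infty$ by dominated convergence, with dominating function $|\operatorname{tr}(\mathbf{M}_{11}(j\omega)^*\mathbf{M}_{12}(j\omega)D\mathbf{M}_{21}(j\omega))|$. This is integrable because $\mathfrak{D}_{12}D\mathfrak{D}_{21}=0$ makes $\mathbf{M}_{12}D\mathbf{M}_{21}$ strictly proper.

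Equivalently, you can use a contour argument. If $\boldsymbol{\Psi}_\mK$ is antistable, then $\operatorname{tr}(\boldsymbol{\Psi}_\mK^\sim D)$ is stable. Its $1/s$ term at infinity is proportional to $\operatorname{tr}(B_{\mathrm{cl},\mK}^\tr C_{\mathrm{cl},\mK}^\tr\mathfrak{D}_{12}D\mathfrak{D}_{21})=0$, so it is $O(\omega^{-2})$ on the imaginary axis. Its integral therefore vanishes by closing the contour in the right half-plane.

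Finally, your fallback to a classical observer-based parameterization would reintroduce the controller-order restriction that this statement is meant to remove. Keep the centered, $I_q$-augmented version.
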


A version of \Cref{theorem:global_optimality_RK} appeared recently in \cite[Theorem 3.4]{li2025policy}, under the standing assumption that the controller order $q$ is large enough so that $\mathcal{C}_q$ contains a globally optimal LQG controller. Our \Cref{theorem:global_optimality_RK} works for any controller order $q$ as long as $\mathcal{C}_q \neq\varnothing$. 
The proof follows the same high-level strategy as \cite{li2025policy} by exploiting the convexity of the LQG problem \cref{eq:LQG-control} revealed by the Youla parameterization in the frequency domain. We provide a self-contained proof in \Cref{appendix:proof_global_optimality_RK}, which is more direct than the one in \cite{li2025policy}.\footnote{The proof in \cite{li2025policy} relies on \cite[Theorem~2]{zheng2022escaping} for the necessity direction.} In particular, given any stabilizing policy $\mK \in \mathcal{C}_q$, the Youla parameterization centered at $\mK$ represents all stabilizing policies, of arbitrary orders, through a stable Youla parameter $\mathbf{Q}$, with $\mathbf{Q} =0$ corresponding to $\mK$. Under this parameterization, the LQG problem~\cref{eq:LQG-control} becomes a convex quadratic optimization problem in the Youla parameter $\mathbf{Q}$. Consequently, $\mK$ is globally optimal if and only if the first-order derivative of the Youla-parameterized objective vanishes at $\mathbf{Q} =0$. We then show that this first-order condition is precisely equivalent to $\mathbf{R}_\mK(s) \equiv 0$. The details are technically involved, which are given in \Cref{appendix:proof_global_optimality_RK}.

The next result reveals an explicit relation between the Hessian at $\mK\oplus \lambda I_p$ and the transfer matrix $\mathbf{R}_\mK$. 

\begin{theorem}
\label{lemma:hessian_augmented_scalar}
Let $q\in\mathbb{Z}_{\geq 0}$ satisfy $\mathcal{C}_q\neq\varnothing$, and let $p\in\mathbb{Z}_{>0}$. Given $\mK\in\mathcal{C}_q$, for every $\lambda<0$ and
\[
\Delta = \begin{bmatrix}
0_{(m+q)\times(\ell+q)} & \Delta_{12} \\
\Delta_{21} & 0_{p\times p}
\end{bmatrix}\in\mathcal{M}_{q,p},
\]
we have
\begin{equation} \label{eq:Hessian_augmented_scalar}
D^2J_{q+p}\big(\mK\oplus\lambda I_{p}\big)
[\Delta,\Delta]
=4\operatorname{tr}\!\left(
\Delta_{21}
\mathbf{R}_{\mK}(-\lambda)^\tran
\Delta_{12}\right).
\end{equation}
\end{theorem}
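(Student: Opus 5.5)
By \cref{lemma:hessian_augmented_general}, the Hessian on $\mathcal{M}_{q,p}$ reduces to the mixed term
\[
D^2J_{q+p}(\mK\oplus\lambda I_p)[\Delta,\Delta]=2\,D^2J_{q+p}(\mK\oplus\lambda I_p)[\mathscr{P}_{12}\Delta,\mathscr{P}_{21}\Delta].
\]
It therefore suffices to compute the mixed second derivative $\partial_s\partial_t J_{q+p}(\mK\oplus\lambda I_p+s\mathscr{P}_{12}\Delta+t\mathscr{P}_{21}\Delta)$ at $s=t=0$ and show that it equals $2\operatorname{tr}(\Delta_{21}\mathbf{R}_\mK(-\lambda)^\tran\Delta_{12})$. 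I will work with the standard formula
\[
J=\operatorname{tr}(C_{\mathrm{cl}}X C_{\mathrm{cl}}^\tran)=\operatorname{tr}(B_{\mathrm{cl}}^\tran Y B_{\mathrm{cl}}),
\]
where $X,Y$ solve the closed-loop Lyapunov equations \cref{eq:closed-loop_Lyapunov}. These are the Lyapunov equations for the augmented closed loop of order $n+q+p$, with $X,Y$ partitioned conformally with respect to (plant${}+{}$controller, appended block).

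First I write out the augmented closed-loop matrices at $\mK\oplus\lambda I_p+s\mathscr{P}_{12}\Delta+t\mathscr{P}_{21}\Delta$. Split $\Delta_{12}=\begin{bmatrix}\Delta_{12}^{u}\\ \Delta_{12}^{\xi}\end{bmatrix}$, where $\Delta_{12}^u\in\mathbb{R}^{m\times p}$ perturbs the $C$-block and $\Delta_{12}^\xi\in\mathbb{R}^{q\times p}$ perturbs $A_{\mK,12}$. Similarly split $\Delta_{21}=\begin{bmatrix}\Delta_{21}^{y}&\Delta_{21}^{\xi}\end{bmatrix}$. Then
\[
A_{\mathrm{cl}}=\begin{bmatrix}A_{\mathrm{cl},\mK} & sE\\ tF & \lambda I_p\end{bmatrix},\qquad
E=\begin{bmatrix}B\Delta_{12}^u\\ \Delta_{12}^\xi\end{bmatrix},\qquad
F=\begin{bmatrix}\Delta_{21}^yC & \Delta_{21}^\xi\end{bmatrix}.
\]
Moreover, $C_{\mathrm{cl}}$ gains the column $\begin{bmatrix}0\\ sR^{1/2}\Delta_{12}^u\end{bmatrix}$ and $B_{\mathrm{cl}}$ gains the row $\begin{bmatrix}0 & tV^{1/2\tran}\Delta_{21}^{y\tran}\end{bmatrix}$ in the appended block. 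At $s=t=0$, the augmented $X$ and $Y$ are $\operatorname{diag}(X_\mK,0)$ and $\operatorname{diag}(Y_\mK,0)$, since the appended state is neither excited nor weighted.

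Next I compute the cross derivative. Perturbation in $t$ alone makes only the off-diagonal block $X_{21}^{(t)}$ nonzero at first order. It solves the Sylvester equation
\[
\lambda X_{21}^{(t)}+X_{21}^{(t)}A_{\mathrm{cl},\mK}^\tran+F X_\mK+\begin{bmatrix}0 & \Delta_{21}^yV^{1/2}\end{bmatrix}B_{\mathrm{cl},\mK}^\tran=0.
\]
Using $B_{\mathrm{cl},\mK}$, the inhomogeneous term equals $\Delta_{21}\bigl(\begin{bmatrix}C&0\\0&I\end{bmatrix}X_\mK+\begin{bmatrix}0 & VB_\mK^\tran\\0&0\end{bmatrix}\bigr)=\Delta_{21}N^\tran$, where $N$ is the right factor in $\mathbf{R}_\mK$. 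Because $\lambda I$ is scalar, the solution is explicit:
\[
X_{21}^{(t)}=\Delta_{21}N^\tran(-\lambda I-A_{\mathrm{cl},\mK}^\tran)^{-1}.
\]
Differentiating $J=\operatorname{tr}(C_{\mathrm{cl}}XC_{\mathrm{cl}}^\tran)$ in $s$ then gives a mixed term. It is a sum of two contributions: $2\operatorname{tr}$ of the $s$-derivative of $C_{\mathrm{cl}}$ against $X_{21}^{(t)}$, and the $s$-coupling $E$ in $A_{\mathrm{cl}}$ propagated through $Y_\mK$. The latter is handled by the usual adjoint identity $\operatorname{tr}(\dot A X\,\cdot\,)\leftrightarrow\operatorname{tr}(Y\,\cdot\,)$, i.e.\ $\partial_s J=\operatorname{tr}(2Y\,\partial_sA_{\mathrm{cl}}X+\cdots)$ evaluated at first order in $t$. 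Collecting terms should give
\[
2\operatorname{tr}\bigl(\Delta_{12}^\tran M\,X_{12}^{(t)}\bigr),\qquad
M=\begin{bmatrix}0&RC_\mK\\0&0\end{bmatrix}+\begin{bmatrix}B^\tran&0\\0&I\end{bmatrix}Y_\mK,
\]
the left factor in $\mathbf{R}_\mK$. Substituting $X_{12}^{(t)}$ yields $2\operatorname{tr}\bigl(\Delta_{12}^\tran M(-\lambda I-A_{\mathrm{cl},\mK})^{-1}N\Delta_{21}^\tran\bigr)=2\operatorname{tr}(\Delta_{21}\mathbf{R}_\mK(-\lambda)^\tran\Delta_{12})$. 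Multiplying by $2$ from the first step gives \cref{eq:Hessian_augmented_scalar}.

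The main obstacle is the bookkeeping in this last step. It must be checked that all second-order contributions other than the $E$–$X_{21}$ and $C_{\mathrm{cl}}$–$X_{21}$ pairings vanish: the $(2,2)$ block of $X$ is $O(t^2)$, and the $s$-derivative of $X_{11}$ is zero at $t=0$. It must also be checked that the $Y_\mK$ term and the $RC_\mK$ term assemble exactly into $M$ with the correct factor. I would organize this via the Lagrangian form $J=\operatorname{tr}(B_{\mathrm{cl}}B_{\mathrm{cl}}^\tran Y)$ with the adjoint $Y$, differentiating both $X$ and $Y$ once each. This symmetric approach makes the factors $M$ and $N$ appear simultaneously.
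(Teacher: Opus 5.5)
Your proposal is correct and follows essentially the paper's route. With $X_{\overline{\mK}}=\operatorname{diag}(X_\mK,0)$ and $Y_{\overline{\mK}}=\operatorname{diag}(Y_\mK,0)$, only the off-diagonal block $\Delta_{21}N^\tran(-\lambda I-A_{\mathrm{cl},\mK}^\tran)^{-1}$ of the first-order variation of $X$ survives (because the appended block is the scalar $\lambda I_p$), and it is paired with $M^\tran=Y_\mK\mathfrak{B}+\Pi_{C_\mK}$; this is exactly how the paper specializes \cref{lemma:hessian_general}.

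The only difference is that you polarize via \cref{lemma:hessian_augmented_general} and then rederive the mixed term directly from the Lyapunov equations, instead of citing \cite[Lemma 9]{tang2023analysis}; as a result the $\Delta_BV\Delta_B^\tran$ and $\Delta_C^\tran R\Delta_C$ terms never appear. The bookkeeping you flag does close: $\partial_sA_{\mathrm{cl}}\,X_{\overline{\mK}}=0$ forces $\partial_sX=0$ at the base point, so the mixed partial is exactly $2\operatorname{tr}(\partial_sC_{\mathrm{cl}}\,\partial_tX\,C_{\mathrm{cl}}^\tran)+2\operatorname{tr}(Y_\mK E X_{21}^{(t)})=2\operatorname{tr}(\Delta_{12}^\tran M X_{12}^{(t)})$. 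One minor slip: the appended row of $B_{\mathrm{cl}}$ is $\begin{bmatrix}0 & t\Delta_{21}^yV^{1/2}\end{bmatrix}$, which is the form you correctly use in the Sylvester equation.
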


The proof is based on a careful and direct (but a bit tedious) specialization of the general Hessian formula in \cite[Lemma 9]{tang2023analysis}. 
The key observation is that this formula simplifies substantially
because of the augmented form $\mK\oplus \lambda I_p$, the simple diagonal structure $\lambda I_p$, and the selected direction $\Delta$. 
After the simplification, the explicit Hessian connection with $\mathbf{R}_\mK$ in  \cref{eq:Hessian_augmented_scalar} looks very interesting to us. We note that related Hessian calculations appear in \cite[Theorem 5]{tang2023analysis}, \cite[Theorem 2]{zheng2022escaping}, and \cite[Corollary B.1]{li2025policy}.  For completeness, we provide the detailed derivation of \Cref{lemma:hessian_augmented_scalar} in \cref{appendix:proof_hessian_augmented_scalar}. Note that \Cref{theorem:global_optimality_RK,lemma:hessian_augmented_scalar} also precisely answer a question raised in \cite[Remark 2]{zheng2022escaping}.

Now, combining \cref{lemma:vanishing_hessian_augmented_scalar}, \cref{theorem:global_optimality_RK} and \cref{lemma:hessian_augmented_scalar}, we can establish  \cref{theorem:nonminimal-local-minimum-global} that any local minimum of $J_q$ that is uncontrollable or unobservable must be globally optimal.

\begin{proof}[Proof of \cref{theorem:nonminimal-local-minimum-global}] Let $\mK \in \mathcal{C}_q$ be a local minimum of $J_q$. As suggested earlier, we can, without loss of generality, assume that $\mK$ is in the Kalman canonical form~\cref{eq:Kalman_canonical_form_controller}. This is guaranteed by  \cref{lemma:local_minimum_similarity_trans} and \cite[Theorem~3.10]{zhou1996robust}. \Cref{lemma:vanishing_hessian_augmented_scalar} then implies that, for any $\lambda\in(-\infty,0)$, we have
\[
D^2 J_q(\mathfrak{m}(\mK)\oplus\lambda I_{q-r_\mK})[\Delta,\Delta]=0
\]
for all $\Delta\in\mathcal{M}_{r_\mK,q-r_\mK}$. We then apply \cref{lemma:hessian_augmented_scalar} to obtain
\[
\operatorname{tr}\!\left(
\Delta_{21}
\mathbf{R}_{\mathfrak{m}(\mK)}(-\lambda)^\tran
\Delta_{12}\right) = 0
\]
for all $\Delta_{12}\in\mathbb{R}^{(m+r_\mK)\times (q-r_\mK)}$, $\Delta_{21}\in\mathbb{R}^{(q-r_\mK)\times(\ell+r_\mK)}$ and $\lambda\in(-\infty,0)$. For any $1\leq i\leq m+r_\mK$ and $1\leq j\leq \ell+r_\mK$, there exist certain choices of $\Delta_{12}\in\mathbb{R}^{(m+r_\mK)\times (q-r_\mK)}$ and $\Delta_{21}\in\mathbb{R}^{(q-r_\mK)\times(\ell+r_\mK)}$ such that $\operatorname{tr}\!\left(
\Delta_{21}
\mathbf{R}_{\mathfrak{m}(\mK)}(-\lambda)^\tran
\Delta_{12}\right)$ equals the $(i,j)$th entry of $\mathbf{R}_{\mathfrak{m}(\mK)}(-\lambda)$. Therefore, we have 
\begin{equation}
\label{eq:RK_vanish_negative_reals}
\mathbf{R}_{\mathfrak{m}(\mK)}(-\lambda)=0,
\qquad\forall \lambda\in(-\infty,0).
\end{equation}

We may further write the $(i,j)$th entry of $\mathbf{R}_{\mathfrak{m}(\mK)}(s)$ as a quotient $\frac{n_{ij}(s)}{d_{ij}(s)}$, where $n_{ij}(s)$ and $d_{ij}(s)$ are both polynomials. The property~\cref{eq:RK_vanish_negative_reals} then implies that $n_{ij}(s)=0$ for $s\in(0,+\infty)$. Finally, note that the identity theorem for holomorphic functions (see, e.g., \cite[Theorem 10.18]{rudin1986real}) implies that, if a polynomial vanishes on a subset of $\mathbb{C}$ that has a limit point, it must also vanish on the whole complex plane. It is not hard to see that $(0,+\infty)$ is a non-empty set of which all members are its limit points in $\mathbb{C}$. Thus the polynomial $n_{ij}(s)$ vanishes on the whole complex plane. Summarizing these results, we obtain $\mathbf{R}_{\mathfrak{m}(\mK)}=0$. Applying \cref{theorem:global_optimality_RK} gives
\[
J_q(\mK) = J_{r_\mK}(\mathfrak{m}(\mK))
=\inf_{\mK'\in\mathcal{C}_n} J_n(\mK'),
\]
which completes the proof.
\end{proof}

We are now ready to provide the proof of \Cref{theorem:hessian-negative-direction}.

\begin{proof}[Proof of \Cref{theorem:hessian-negative-direction}]
Since $\mK$ is not a globally optimal policy, by \cref{theorem:global_optimality_RK}, the transfer matrix $\mathbf{R}_\mK$ is nonzero, i.e., at least one entry of $\mathbf{R}_\mK$ is a nonzero rational function. Suppose this nonzero entry is the $(i,j)$th entry, and write it as the quotient $\frac{n_{ij}(s)}{d_{ij}(s)}$ where $n_{ij}(s)$ and $d_{ij}(s)$ are both nonzero polynomials. Then
\[
\setc*{\lambda\in(-\infty,0)}{n_{ij}(-\lambda)=0}
\]
has Lebesgue measure zero by \cref{lemma:zero_set_analytic_function}.

Next, we denote
\[
\begin{aligned}
\mathcal{Z} ={} & \setc*{\lambda\in(-\infty,0)}{D^2 J_{q+p}(\mK\oplus\lambda I_p)[\Delta,\Delta]\geq 0\text{ for all }\Delta\in \mathcal{V}_{q+p}} \\
& \cup \setc*{\lambda\in(-\infty,0)}{D^2 J_{q+p}(\mK\oplus\lambda I_p)[\Delta,\Delta]\leq 0\text{ for all }\Delta\in \mathcal{V}_{q+p}}.
\end{aligned}
\]
By \cref{corollary:vanishing_hessian_augmented}, we have
\[
\mathcal{Z} \subseteq \setc*{\lambda\in(-\infty,0)}{D^2 J_{q+p}(\mK\oplus\lambda I_p)[\Delta,\Delta]= 0\text{ for all }\Delta\in \mathcal{M}_{q,p}}.
\]
By using \cref{lemma:hessian_augmented_scalar}, we can find a particular $\Delta\in\mathcal{M}_{q,p}$ such that
\[
D^2J_{q+p}\big(\mK\oplus\lambda I_{p}\big)
[\Delta,\Delta]
=4\frac{n_{ij}(-\lambda)}{d_{ij}(-\lambda)}
\]
for each $\lambda\in(-\infty,0)$. Consequently,
\[
\mathcal{Z}\subseteq
\setc*{\lambda\in(-\infty,0)}{n_{ij}(-\lambda)=0}.
\]
We now see that $\mathcal{Z}$ is a subset of a set of measure zero, and thus has Lebesgue measure zero. We complete the proof by noting that the set~\cref{eq:augmented_scalar_negative_direction} is the complement of $\mathcal{Z}$ in $(-\infty,0)$.
\end{proof}

\begin{remark}
     The frequency-domain characterization of the global optimality of $\mK$ in \Cref{theorem:global_optimality_RK} is a central result to establish \cref{theorem:nonminimal-local-minimum-global,theorem:hessian-negative-direction}. This frequency-domain characterization is sufficient and necessary, and does not rely on the order of policies. The proof exploits hidden convexity under the Youla parameterization, which allows local first-order information to certify global optimality. This frequency-domain viewpoint  complements the recent extended convex lifting framework in \cite{zheng2026benignI,zheng2026benignII,zheng2025extended}, which exploits hidden convexity in the state-space representation. The global-optimality results for LQG control in these works rely on a nondegeneracy condition for full-order dynamic policies, which becomes irrelevant in \Cref{theorem:global_optimality_RK}. Finally, one can directly verify that the optimal controller constructed from the Riccati solutions in \Cref{theorem:seperation-controller} satisfies $\mathbf{R}_\mK(s)\equiv0$. Together with \Cref{theorem:global_optimality_RK}, this provides an alternative proof of the separation principle for LQG control. \hfill $\square$
\end{remark}

\section{Conclusion} \label{section:conclusion}

This paper has studied the nonconvex optimization landscape of LQG control under direct state-space parameterization. We have proved that every local minimum of the full-order LQG cost is globally optimal. More generally, at any controller order, every local minimum corresponding to an uncontrollable or unobservable realization attains the globally optimal LQG cost over controllers of arbitrary orders. We have further shown that every suboptimal stationary point of the full-order LQG cost can be converted into a strict saddle with probability one. 
Future interesting directions include 1) investigating whether similar results hold for other control problems (such as robust control and distributed control), and 2) developing local search algorithms that exploit the policy augmentation mechanism. 

\vspace{6pt}

\noindent \textbf{Statements on AI use}. The authors had the question of whether full-order LQG control admits suboptimal local minima in 2020, while preparing the manuscript \cite{tang2023analysis}. 
The initial proof for the main results in \Cref{section:main-results} was found by ChatGPT 5.6 Sol on July 15, 2026. One key idea in the proof was the link between recent results on the Hessian characterization \cite{zheng2022escaping} and the frequency-domain characterization \cite{li2025policy}. The initial proof by ChatGPT 5.6 Sol devoted substantial space to routine steps, with several genuinely nontrivial logical transitions treated too briefly. The authors subsequently verified, substantially revised, and rewrote the proofs, and take full responsibility for the accuracy and content of the final manuscript.

\bibliographystyle{unsrt}
\bibliography{ref.bib}

\newpage
\appendix

\section{Youla Parameterization and Proof of \Cref{theorem:global_optimality_RK}} \label{appendix:proof_global_optimality_RK}

As outlined after \Cref{theorem:global_optimality_RK}, the proof utilizes the convex reformulation of the LQG problem~\cref{eq:LQG-control} using Youla parameterization in the frequency domain. We present the proof details in this section: Given any stabilizing policy $\mK \in \mathcal{C}_q$, the Youla parameterization centered at $\mK$ represents all stabilizing policies, of arbitrary orders, through a stable Youla parameter $\mathbf{Q}$, with $\mathbf{Q} =0$ corresponding to $\mK$ (\Cref{appendix:Youla-parameterization}). Under this parameterization, the LQG problem \cref{eq:LQG-control} becomes a convex quadratic optimization problem in the Youla parameter $\mathbf{Q}$ (\Cref{appendix:convex-reformulation}). Consequently, $\mK$ is globally optimal if and only if the first-order derivative of the Youla-parameterized objective vanishes at $\mathbf{Q} =0$. We then show that this first-order condition is precisely equivalent to $\mathbf{R}_\mK(s) \equiv 0$ (\Cref{appendix:first-order-residual}). 

\vspace{-2mm}

\paragraph{Frequency-domain notation.} We extensively use frequency-domain representations. For completeness, we briefly review the frequency-domain spaces used below; see~\cite{zhou1996robust} for a classical textbook. We will only use continuous-time and real-rational transfer~matrices. 
Let $\mathcal{RL}_2^{a\times b}$ denote the space of strictly proper real-rational transfer matrices of size $a\times b$ with no poles on the imaginary axis.

For $\mathbf G,\mathbf H\in\mathcal{RL}_2^{a\times b}$, their $\mathcal L_2$ inner product and norm are defined by
\[
    \langle \mathbf G,\mathbf H\rangle_{\mathcal L_2}
    \coloneq
    \frac{1}{2\pi}\int_{-\infty}^{\infty}
    \operatorname{tr}\!\left(
        \mathbf G(j\omega)^*\mathbf H(j\omega)
    \right)\,d\omega,
    \qquad
    \|\mathbf G\|_{\mathcal L_2}^2
    \coloneq
    \langle \mathbf G,\mathbf G\rangle_{\mathcal L_2}.
\]
The space $\mathcal{RH}_2^{a\times b}\subset\mathcal{RL}_2^{a\times b}$ consists of $a\times b$ transfer matrices whose poles all lie in the open left half-plane, i.e., they are strictly proper and stable. For $\mathbf G\in\mathcal{RH}_2^{a\times b}$, we write $\|\mathbf G\|_{\mathcal H_2}\coloneq\|\mathbf G\|_{\mathcal L_2}$.
The space $\mathcal{RH}_\infty^{a\times b}$ consists of proper, stable,
real-rational transfer matrices, with
$
    \|\mathbf G\|_{\mathcal H_\infty}
    \coloneq
    \sup_{\omega\in\mathbb R}
    \sigma_{\max}\!\left(\mathbf G(j\omega)\right).
$ 
In particular, a nonzero constant transfer matrix of dimension $a \times b$ belongs to $\mathcal{RH}_\infty^{a\times b}$ but not to $\mathcal{RL}_2^{a\times b}$. The superscripts in $\mathcal{RL}_2^{a\times b}$, $\mathcal{RH}_2^{a\times b}$, and $\mathcal{RH}_\infty^{a\times b}$ will frequently be omitted when they can be inferred from the context.

Every $\mathbf G\in\mathcal{RL}_2$ admits a unique orthogonal stable--antistable decomposition
$
    \mathbf G=\mathbf G_- +\mathbf G_+,
$ 
where $\mathbf G_-$ has all its poles in the open right half-plane (i.e., all poles are antistable) and $\mathbf G_+\in\mathcal{RH}_2$. In particular, we have
$
    \langle \mathbf G_-,\mathbf H\rangle_{\mathcal L_2}=0,
    \,
    \forall\,\mathbf H\in\mathcal{RH}_2
$  \cite[Chapter 4.3]{zhou1996robust}. 
Finally, the \textit{paraconjugate transpose} of a real-rational transfer matrix $\mathbf G$ is defined by
$
    \mathbf G^{\sim}(s)\coloneq\mathbf G(-s)^\tr,
$ 
so that $\mathbf G^{\sim}(j\omega)=\mathbf G(j\omega)^*$.

\subsection{Youla parameterization of all stabilizing policies} \label{appendix:Youla-parameterization}

Fix \(q\in\mathbb{Z}_{\geq 0}\) and a stabilizing policy
\[
\mK=
\begin{bmatrix}
0_{m\times\ell}&C_{\mK}\\
B_{\mK}&A_{\mK}
\end{bmatrix}
\in\mathcal{C}_q.
\]
Starting from this nominal policy, we seek to parameterize stabilizing policies of arbitrary orders in \(\bigcup_{r\in\mathbb{Z}_{\geq 0}}\mathcal{C}_r\). For this purpose, we perturb the policy dynamics \cref{eq:dynamic_controller} as 
\begin{equation} \label{eq:nominal-policy-with-perturbation}
\begin{aligned}
    \dot{\xi}(t) ={} & A_{\mK}\xi(t)+B_\mK y(t) + \nu_\xi(t), \\
u(t) ={} & C_\mK \xi(t) + \nu_u(t), 
\end{aligned}
\end{equation}
where $\nu_\xi(t) \in \mathbb{R}^q,\nu_u(t) \in \mathbb{R}^m$ are perturbing signals. We let \(\nu=\operatorname{col}(\nu_u,\nu_\xi)\) depend dynamically on the system output $y$ and the controller state $\xi$, i.e., \(\eta=\operatorname{col}(y,\xi)\). Specifically, we introduce a finite-dimensional LTI system with transfer matrix $\boldsymbol{\Delta}$, and let $\nu$ be the output of $\boldsymbol{\Delta}$ with input $\eta$. 
In particular, if $\mathbf{\Delta}$ has state $\zeta(t)\in\mathbb{R}^p$, it admits a realization of the form
\begin{equation}
\label{eq:centered-youla-delta-realization}
\begin{aligned}
\dot{\zeta}(t)
&=A_\Delta\zeta(t)
+B_{\Delta y}y(t)
+B_{\Delta\xi}\xi(t),\\
\nu_u(t)
&=C_{\Delta u}\zeta(t)+D_{u\xi}\xi(t),\\
\nu_\xi(t)
&=C_{\Delta\xi}\zeta(t)
+D_{\xi y}y(t)
+D_{\xi\xi}\xi(t).
\end{aligned}
\end{equation}
The absence of a direct term from $y$ to $\nu_u$ in \cref{eq:centered-youla-delta-realization} ensures that the perturbed policy retains zero feedthrough from the measurement $y$ to the control input \(u\). \Cref{fig:perturbed_sys_Youla} shows the closed-loop system after the perturbation has been incorporated.

\begin{figure}[t]
\centering
\includegraphics[width=.45\linewidth]{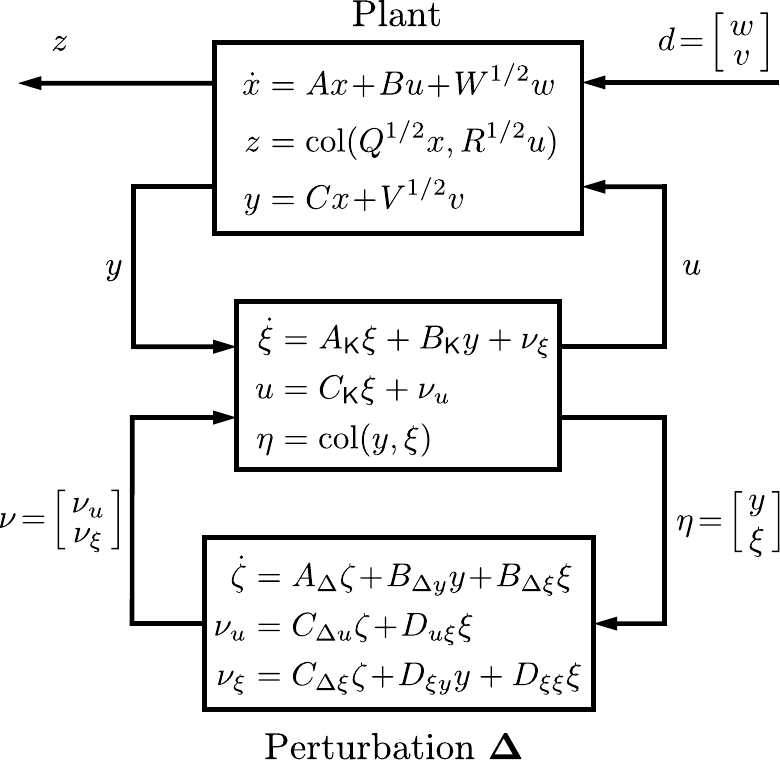}
\caption{The closed-loop system with perturbation $\boldsymbol{\Delta}$.}
\label{fig:perturbed_sys_Youla}
\end{figure}

\begin{figure}
\centering
\begin{subfigure}{.45\linewidth}
\centering
\includegraphics[width=\textwidth]{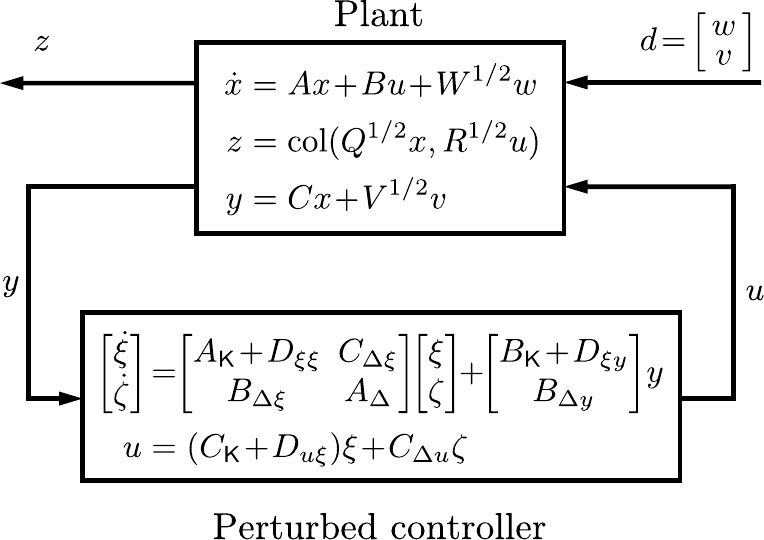}
\caption{}
\label{fig:perturbed_sys_Youla_1}
\end{subfigure}
\hfill
\begin{subfigure}{.45\linewidth}
\centering
\includegraphics[width=\textwidth]{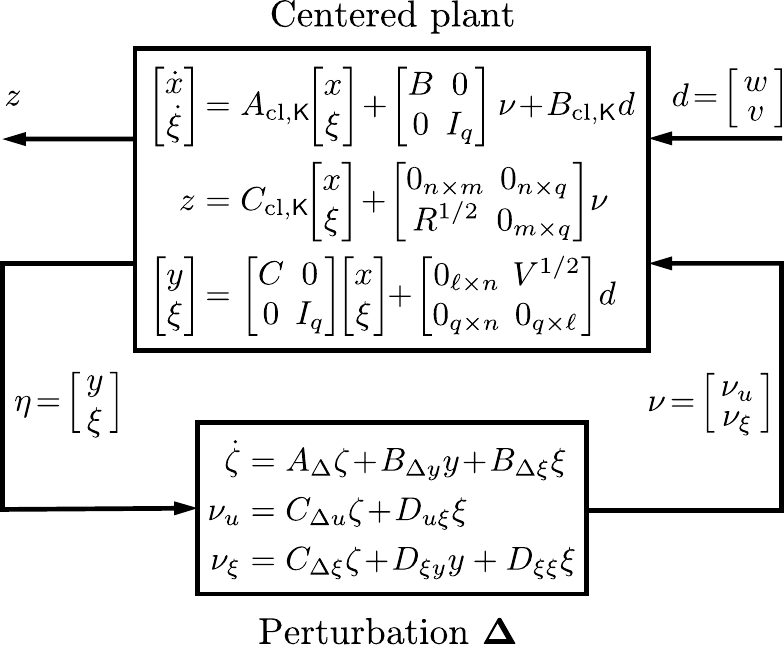}
\caption{}
\label{fig:perturbed_sys_Youla_2}
\end{subfigure}
\caption{Two equivalent formulations of the closed-loop system with perturbation $\boldsymbol{\Delta}$.}
\end{figure}

We may perceive the closed-loop system from two equivalent angles. The first angle is to combine \cref{eq:nominal-policy-with-perturbation} with \cref{eq:centered-youla-delta-realization} to form a perturbed controller first, and then connect it to the plant~\cref{eq:plant} to close the loop. 
Substituting \cref{eq:centered-youla-delta-realization} into \cref{eq:nominal-policy-with-perturbation} yields the perturbed controller with state \(\operatorname{col}(\xi,\zeta)\) and realization
\begin{equation}
\label{eq:centered-youla-patched-controller}
\mK_\Delta=
\left[
\begin{array}{c:cc}
0 &C_{\mK}+D_{u\xi}&C_{\Delta u}\\ \hdashline
B_{\mK}+D_{\xi y} &A_{\mK}+D_{\xi\xi}&C_{\Delta\xi}\\
B_{\Delta y} &B_{\Delta\xi}&A_\Delta
\end{array}
\right].
\end{equation}
Also see \cref{fig:perturbed_sys_Youla_1} for an illustration.
Thus, a static perturbation (\(p=0\)) modifies \((A_{\mK},B_{\mK},C_{\mK})\) without changing the policy order, and a dynamic perturbation with \(p>0\) states produces an order-\((q+p)\) realization.
We say that $\boldsymbol{\Delta}$ is an \textit{admissible internally stabilizing perturbation}, if $\mK_\Delta\in\mathcal{C}_{p+q}$ for any minimal $p$-dimensional realization of $\boldsymbol{\Delta}$ with the form~\cref{eq:centered-youla-delta-realization}.

The second angle is to combine the original plant \cref{eq:plant} with \cref{eq:nominal-policy-with-perturbation} to form a ``centered'' plant, with the perturbation $\boldsymbol{\Delta}$ now serving as a feedback controller for the centered plant. Specifically,
we substitute \cref{eq:nominal-policy-with-perturbation} into the~plant
\cref{eq:plant}, which yields the centered plant with inputs
\((d,\nu)\) and outputs \((z,\eta)\). The output $\eta$ will then be fed into $\boldsymbol{\Delta}$, producing $\nu$ that will be fed back to the centered plant. See \cref{fig:perturbed_sys_Youla_2} for an illustration of the second angle. Some straightforward computations then show that the transfer matrix of the centered plant from $(d,\nu)$ to $(z,\eta)$ is given by
\begin{equation}
\label{eq:centered-youla-four-block-map}
\mathbf{P}_{\mK}^{\mathrm{ctr}}=
\begin{bmatrix}
\mathbf{M}_{11}&\mathbf{M}_{12}\\
\mathbf{M}_{21}&\mathbf{M}_{22}
\end{bmatrix},
\end{equation}
where 
\begin{subequations} \label{eq:centered-youla-M}
\begin{align}
\mathbf{M}_{11}(s)
&=C_{\mathrm{cl},\mK}(sI-A_{\mathrm{cl},\mK})^{-1}B_{\mathrm{cl},\mK},\label{eq:centered-youla-M11}\\
\mathbf{M}_{12}(s)
&=C_{\mathrm{cl},\mK}(sI-A_{\mathrm{cl},\mK})^{-1}\mathfrak{B}+\mathfrak{D}_{12},
\label{eq:centered-youla-M12}\\
\mathbf{M}_{21}(s)
&=\mathfrak{C}(sI-A_{\mathrm{cl},\mK})^{-1}B_{\mathrm{cl},\mK}+\mathfrak{D}_{21},
\label{eq:centered-youla-M21}\\
\mathbf{M}_{22}(s)
&=\mathfrak{C}(sI-A_{\mathrm{cl},\mK})^{-1}\mathfrak{B}.
\label{eq:centered-youla-M22}
\end{align}
\end{subequations}
with  
\begin{equation} \label{eq:augmented-matrices}
\mathfrak{B}=
\begin{bmatrix}
B&0_{n\times q}\\
0_{q\times m}&I_q
\end{bmatrix},
\,
\mathfrak{C}=
\begin{bmatrix}
C&0_{\ell\times q}\\
0_{q\times n}&I_q
\end{bmatrix},
\, 
\mathfrak{D}_{12}=
\begin{bmatrix}
0_{n\times m}&0_{n\times q}\\
R^{1/2}&0_{m\times q}
\end{bmatrix},
\,
\mathfrak{D}_{21}=
\begin{bmatrix}
0_{\ell\times n}&V^{1/2}\\
0_{q\times n}&0_{q\times\ell}
\end{bmatrix}.
\end{equation}
Since the nominal policy $\mK$ is internally stabilizing, the centered plant is open-loop internally stable.

The closed-loop transfer matrix from $d$ to $z$ is then given by
\begin{equation}
    \mathbf{T}_{zd,\mK_{\Delta}}(s) = \mathbf{M}_{11}(s) + \mathbf{M}_{12}(s)\mathbf{\Delta}(s)(I - \mathbf{M}_{22}(s)\mathbf{\Delta}(s))^{-1}\mathbf{M}_{21}(s). 
\end{equation}
It turns out that the perturbed policy $\mK_{\Delta}$ in \cref{eq:centered-youla-patched-controller} internally stabilizes the plant \cref{eq:plant} if and only if $\mathbf{\Delta}(I - \mathbf{M}_{22}\mathbf{\Delta})^{-1}$ is a stable transfer matrix. Moreover, any stabilizing policy of arbitrary order can be represented by a suitable stable transfer matrix $\mathbf{Q} = \mathbf{\Delta}(I - \mathbf{M}_{22}\mathbf{\Delta})^{-1}$. This stable transfer matrix $\mathbf{Q}$ is called the Youla parameter \cite{youla1976modern}. 

The policy structure in \cref{eq:centered-youla-delta-realization,eq:dynamic_controller}
requires the direct feedthrough from \(y\) to \(u\) to vanish. We therefore define the admissible feedthrough subspace
\begin{equation}
\label{eq:centered-youla-feedthrough-space}
\mathcal{D}_q\coloneq
\left\{
\begin{bmatrix}
0_{m\times\ell}&D_{u\xi}\\
D_{\xi y}&D_{\xi\xi}
\end{bmatrix} \in \mathbb{R}^{(m + q)\times(\ell + q)}
: 
D_{u\xi}\in\mathbb{R}^{m\times q},
D_{\xi y}\in\mathbb{R}^{q\times\ell},
D_{\xi\xi}\in\mathbb{R}^{q\times q}
\right\},
\end{equation}
and the Youla space
\begin{equation}
\label{eq:centered-youla-space}
\mathcal{Y}_q\coloneq
\left\{
\mathbf{Q}\in
\mathcal{RH}_\infty^{(m+q)\times(\ell+q)}
:
\mathbf{Q}(\infty)\in\mathcal{D}_q
\right\}.
\end{equation}
It is clear that every \(\mathbf{Q}\in\mathcal{Y}_q\) admits the unique decomposition
$
\mathbf{Q}=\mathbf{Q}_0+D,
\,
\mathbf{Q}_0\in
\mathcal{RH}_{2},
\, 
D\in\mathcal{D}_q.
$ 
In particular, \(\mathcal{Y}_q\) is a real vector space. 

We have the following characterization of internally stabilizing policies of arbitrary order. 

\begin{lemma}[Centered Youla parameterization]
\label{lemma:centered-youla-parameterization}
Fix \(q\in\mathbb Z_{\geq0}\) and \(\mK\in\mathcal C_q\). Let \(\boldsymbol{\Delta}\) be a proper
real-rational transfer matrix satisfying \(\boldsymbol{\Delta}(\infty)\in\mathcal D_q\), with a minimal \(p\)-dimensional realization of the form \cref{eq:centered-youla-delta-realization}. Let \(\mK_\Delta\) be the corresponding perturbed policy in \cref{eq:centered-youla-patched-controller}, and define
\begin{equation}
\label{eq:centered-youla-forward-map}
\mathbf Q
\coloneq
\boldsymbol{\Delta}
\bigl(I-\mathbf M_{22}\boldsymbol{\Delta}\bigr)^{-1},
\end{equation}
where \(\mathbf M_{22}\) is defined in
\cref{eq:centered-youla-M22}. Then:
\begin{enumerate}
\item The map \(\boldsymbol{\Delta}\mapsto\mathbf Q\) in
\cref{eq:centered-youla-forward-map} is a bijection from the set of admissible
internally stabilizing perturbations onto
\(\mathcal Y_q\), with inverse
\begin{equation}
\label{eq:centered-youla-inverse-map}
\boldsymbol{\Delta}
=
\bigl(I+\mathbf Q\mathbf M_{22}\bigr)^{-1}\mathbf Q
=
\mathbf Q\bigl(I+\mathbf M_{22}\mathbf Q\bigr)^{-1}.
\end{equation}
In particular, every \(\mathbf Q\in\mathcal Y_q\) determines a unique proper real-rational perturbation $\boldsymbol{\Delta}$ satisfying \(\boldsymbol{\Delta}(\infty)\in\mathcal D_q\), and any minimal realization of this perturbation yields an internally stabilizing perturbed policy $\mK_\Delta$. 
\item For every \(r\in\mathbb Z_{\geq0}\) and every
\(\widehat{\mK}\in\mathcal C_r\), there exists
\(\mathbf Q\in\mathcal Y_q\) such that the perturbed policy obtained
from \cref{eq:centered-youla-inverse-map} has the same controller
transfer matrix as \(\widehat{\mK}\); namely,
\begin{equation}
\label{eq:recover_arbitrary_controller}
C_{\mK_\Delta}(sI-A_{\mK_\Delta})^{-1}B_{\mK_\Delta}
=
C_{\widehat{\mK}}
(sI-A_{\widehat{\mK}})^{-1}B_{\widehat{\mK}}.
\end{equation}
\end{enumerate}
\end{lemma}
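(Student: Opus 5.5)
Part 1 is the standard Youla (linear fractional) argument applied to the centered plant $\mathbf{P}_{\mK}^{\mathrm{ctr}}$, which is open-loop stable because $\mK\in\mathcal{C}_q$. The plan is to work with the augmented closed loop formed by the centered plant and $\boldsymbol{\Delta}$ (\cref{fig:perturbed_sys_Youla_2}).

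The first step is to show that this interconnection is realization-equivalent to the closed loop of the plant \cref{eq:plant} with $\mK_\Delta$. Both have state $\operatorname{col}(x,\xi,\zeta)$ and the same state matrix, namely $A_{\mathrm{cl},\mK_\Delta}$. Hence $\mK_\Delta\in\mathcal{C}_{q+p}$ if and only if the feedback loop between the stable system $\mathbf{M}_{22}$ and $\boldsymbol{\Delta}$ is internally stable. Well-posedness is automatic: $\mathbf{M}_{22}$ is strictly proper, so $I-\mathbf{M}_{22}(\infty)\boldsymbol{\Delta}(\infty)=I$.

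Because the realization of $\boldsymbol{\Delta}$ is minimal and $\mathbf{M}_{22}$ is stable, the standard internal-stability lemma for a loop with one stable block (\cite[Chapter 12]{zhou1996robust}) applies. It says the loop is internally stable if and only if $\mathbf{Q}=\boldsymbol{\Delta}(I-\mathbf{M}_{22}\boldsymbol{\Delta})^{-1}\in\mathcal{RH}_\infty$. One direction holds because $\mathbf{Q}$ is a closed-loop map. For the converse, every other closed-loop map can be written as $\mathbf{Q}$, $I+\mathbf{M}_{22}\mathbf{Q}$, $\mathbf{Q}\mathbf{M}_{22}$, or $\mathbf{M}_{22}+\mathbf{M}_{22}\mathbf{Q}\mathbf{M}_{22}$, all of which are stable. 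By minimality of both realizations, stability of all these maps gives stability of the state matrix.

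Also, $\mathbf{Q}(\infty)=\boldsymbol{\Delta}(\infty)\in\mathcal{D}_q$, so $\mathbf{Q}\in\mathcal{Y}_q$. For the inverse map, given $\mathbf{Q}\in\mathcal{Y}_q$, define $\boldsymbol{\Delta}=(I+\mathbf{Q}\mathbf{M}_{22})^{-1}\mathbf{Q}$. This is proper, since $I+\mathbf{Q}\mathbf{M}_{22}$ is biproper with value $I$ at infinity, and its value at infinity is $\mathbf{Q}(\infty)\in\mathcal{D}_q$. Direct algebra, using the push-through identity $\mathbf{Q}(I+\mathbf{M}_{22}\mathbf{Q})^{-1}=(I+\mathbf{Q}\mathbf{M}_{22})^{-1}\mathbf{Q}$, shows that the two maps are mutual inverses. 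Applying the stability equivalence above then shows that every minimal realization gives $\mK_\Delta\in\mathcal{C}_{q+p}$.

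For part 2, take $\widehat{\mK}\in\mathcal{C}_r$ and aim for a perturbation that replaces $\mK$ by $\widehat{\mK}$ in the input–output sense. Set $\widehat{\mathbf{K}}(s)=C_{\widehat{\mK}}(sI-A_{\widehat{\mK}})^{-1}B_{\widehat{\mK}}$. Choose $\boldsymbol{\Delta}$ so that $\nu_\xi=0$ and $\nu_u=\widehat{\mathbf{K}}y-C_\mK\xi$. Concretely, take $D_{u\xi}=-C_\mK$, $D_{\xi y}=0$, $D_{\xi\xi}=0$, $C_{\Delta\xi}=0$, and let the $\zeta$-dynamics be $(A_{\widehat{\mK}},B_{\widehat{\mK}},C_{\widehat{\mK}})$ driven by $y$. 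Then $\boldsymbol{\Delta}(\infty)=\begin{bmatrix}0&-C_\mK\\0&0\end{bmatrix}\in\mathcal{D}_q$, and $u=\widehat{\mathbf{K}}y$, which is \cref{eq:recover_arbitrary_controller}.

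The closed-loop state matrix has a block-triangular structure. The $\xi$-state is driven by $y$ but does not feed back into $(x,\zeta)$, so its spectrum is $\operatorname{eig}(A_{\mathrm{cl},\widehat{\mK}})\cup\operatorname{eig}(A_\mK)$. Here $A_\mK$ need not be Hurwitz. This is the main obstacle: the realization above may not be internally stabilizing, and it is not minimal in $\zeta$.

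The fix I would try first is to define $\mathbf{Q}$ directly as the closed-loop map from $\eta$ to $\nu$ in the stabilized loop formed by the plant and $\widehat{\mK}$, with $\xi$ generated as the $\mK$-filtered output $(sI-A_\mK)^{-1}B_\mK$ driven by $y$. In other words, express $\mathbf{Q}$ through the stable closed-loop maps of $\widehat{\mK}$ and check $\mathbf{Q}\in\mathcal{RH}_\infty$. Then part 1 yields the stabilizing $\mK_\Delta$, and I would verify via the inverse map that its controller transfer matrix equals $\widehat{\mathbf{K}}$.

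If this coprime-factor bookkeeping becomes messy, the fallback is the classical doubly coprime factorization built from $\mK$ (\cite[Theorem 12.17]{zhou1996robust}). The argument would go through the standard Youla parameter $\mathbf{Q}_{\mathrm{std}}$ of $\widehat{\mK}$, embedded in the $\nu_u$-channel of $\mathcal{Y}_q$.
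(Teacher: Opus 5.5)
Your Part~1 follows the paper's argument: internal stability of the loop between the stable centered plant and $\boldsymbol{\Delta}$ holds iff $\mathbf{Q}\in\mathcal{RH}_\infty$, and $\mathbf{Q}(\infty)=\boldsymbol{\Delta}(\infty)$. That part is fine. One small correction: the centered plant's realization need not be minimal. It only needs to be stabilizable and detectable, which holds because $A_{\mathrm{cl},\mK}$ is Hurwitz.

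The gap is in Part~2. You correctly see that $\nu_\xi=0$, $\nu_u=\widehat{\mathbf{K}}y-C_\mK\xi$ leaves $\operatorname{eig}(A_\mK)$ in the closed loop, but neither remedy resolves this.

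\emph{First remedy.} Taking $\mathbf{Q}$ as the $\eta\to\nu$ closed-loop map, with $\xi$ still generated by $(sI-A_\mK)^{-1}B_\mK y$, gives exactly $\boldsymbol{\Delta}(I-\mathbf{M}_{22}\boldsymbol{\Delta})^{-1}$ for your own $\boldsymbol{\Delta}$. By the equivalence you proved in Part~1, this $\mathbf{Q}$ is not in $\mathcal{RH}_\infty$ whenever $A_\mK$ has an eigenvalue with nonnegative real part. The obstruction is structural, not bookkeeping.

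\emph{Fallback.} Read as confining the parameter to the $\nu_u$-row, it cannot work in general. Since $\boldsymbol{\Delta}=\mathbf{Q}(I+\mathbf{M}_{22}\mathbf{Q})^{-1}$, a zero $\nu_\xi$-row in $\mathbf{Q}$ gives a zero $\nu_\xi$-row in $\boldsymbol{\Delta}$. Controllability of a minimal realization then forces $C_{\Delta\xi}=0$ and $D_{\xi y}=D_{\xi\xi}=0$. Hence $A_{\mK_\Delta}=\begin{bmatrix}A_\mK&0\\ B_{\Delta\xi}&A_\Delta\end{bmatrix}$ still contains $\operatorname{eig}(A_\mK)$. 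Concretely, take $A=-1$, $B=C=1$, and $A_\mK=0.5$, $B_\mK=1$, $C_\mK=-1$. Then $A_{\mathrm{cl},\mK}=\begin{bmatrix}-1&-1\\ 1&0.5\end{bmatrix}$ is Hurwitz, and $\widehat{\mK}=0$ (order $0$) is stabilizing. Yet any such $\mK_\Delta$ with zero transfer matrix satisfies $\det(sI-A_{\mathrm{cl},\mK_\Delta})=\det(sI-A)\det(sI-A_{\mK_\Delta})$, which vanishes at $s=0.5$. More generally, with $\nu_\xi\equiv 0$, every unstable eigenvalue of $A_\mK$ would have to reappear as a pole of $\widehat{\mathbf{K}}$.

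The missing idea is to use the $\nu_\xi$ channel, whose feedthrough $D_{\xi y},D_{\xi\xi}$ is unrestricted in $\mathcal{D}_q$, to overwrite the nominal controller dynamics. Keep your $\nu_u$ and additionally set $\nu_\xi=-B_\mK y-(A_\mK+\alpha I_q)\xi$ with $\alpha>0$. Then $\boldsymbol{\Delta}(\infty)=\begin{bmatrix}0&-C_\mK\\ -B_\mK&-(A_\mK+\alpha I_q)\end{bmatrix}\in\mathcal{D}_q$, and the realization is minimal when $(A_{\widehat\mK},B_{\widehat\mK},C_{\widehat\mK})$ is. The perturbed policy becomes $\mK_\Delta=\begin{bmatrix}0&0&C_{\widehat\mK}\\ 0&-\alpha I_q&0\\ B_{\widehat\mK}&0&A_{\widehat\mK}\end{bmatrix}$. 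Its closed loop is the closed loop of $\widehat\mK$ plus a decoupled stable mode at $-\alpha$. So $\boldsymbol{\Delta}$ is admissible, Part~1 gives $\mathbf{Q}\in\mathcal{Y}_q$, and the controller transfer matrix equals $\widehat{\mathbf{K}}$, with no coprime factorization needed. This is exactly the paper's construction.
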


\begin{proof}
We prove the two statements below. 
\begin{enumerate}
\item We adopt the second angle shown in \cref{fig:perturbed_sys_Youla_2}. Since $\mK\in\mathcal{C}_q$, the centered plant is internally stable. It is then a direct consequence of \cite[Theorem~12.7]{zhou1996robust} that (any minimal realization of) $\boldsymbol{\Delta}$ internally stabilizes the centered plant if and only if $\boldsymbol{\Delta}(I-\mathbf{M}_{22}\boldsymbol{\Delta})^{-1}\in\mathcal{RH}_\infty$. Then $\mathbf{Q}(\infty)=\boldsymbol{\Delta}(\infty)
\bigl(I-\mathbf M_{22}(\infty)\boldsymbol{\Delta}(\infty)\bigr)^{-1}=\boldsymbol{\Delta}(\infty)$ indicates that $\mathbf{Q}(\infty)\in\mathcal{D}_q$ if and only if $\boldsymbol{\Delta}(\infty)\in\mathcal{D}_q$. The proof of the first statement is now complete.

\item Let $r$ and $\widehat{\mK}=\begin{bmatrix}
0 & C_{\widehat\mK} \\ B_{\widehat\mK} & A_{\widehat\mK}
\end{bmatrix}\in\mathcal{C}_r$ be arbitrary. Without loss of generality, we may assume that $(A_{\widehat\mK},B_{\widehat\mK},C_{\widehat\mK})$ is minimal. Choose
\[
\begin{aligned}
\boldsymbol{\Delta}(s) ={} & \begin{bmatrix}
C_{\widehat\mK}(sI-A_{\widehat\mK})^{-1}B_{\widehat\mK}&-C_{\mK}\\
-B_\mK &-(A_\mK+\alpha I_q)
\end{bmatrix} \\
={} & \begin{bmatrix}
C_{\widehat\mK} \\
0
\end{bmatrix}
(sI-A_{\widehat\mK})^{-1}
\begin{bmatrix}
B_{\widehat\mK} & 0
\end{bmatrix} + \begin{bmatrix}
0&-C_{\mK}\\
-B_\mK &-(A_\mK+\alpha I_q)
\end{bmatrix},
\end{aligned}
\]
where $\alpha>0$ is arbitrary. A state-space realization of $\boldsymbol{\Delta}$ is given by
\[
\begin{aligned}
\dot{\zeta}(t) ={} & A_{\widehat\mK}\zeta(t) + B_{\widehat{\mK}}y(t) \\
\nu_u(t) ={} & C_{\widehat{\mK}}\zeta(t) - C_\mK\xi(t), \\
\nu_\xi(t) ={} &
-B_\mK y(t) - (A_\mK+\alpha I_q)\xi(t),
\end{aligned}
\]
and this realization is clearly minimal.
By~\cref{eq:centered-youla-patched-controller}, the perturbed controller is then
\[
\mK_\Delta = \left[
\begin{array}{c:cc}
0 & 0 &C_{\widehat\mK}\\ \hdashline
0 &-\alpha I_q & 0 \\
B_{\widehat\mK} & 0 & A_{\widehat{\mK}}
\end{array}
\right].
\]
Since $\widehat\mK\in\mathcal{C}_r$ and $\alpha>0$, the closed-loop system with the perturbed controller $\mK_\Delta$ is internally stable. Therefore, $\boldsymbol{\Delta}$ is an admissible internally stabilizing perturbation, and by the first part of the lemma, we have $\mathbf{Q}=\boldsymbol{\Delta}
\bigl(I-\mathbf M_{22}\boldsymbol{\Delta}\bigr)^{-1}\in\mathcal{Y}_q$. The identity~\cref{eq:recover_arbitrary_controller} directly follows from the expression of $\mK_\Delta$.
\qedhere
\end{enumerate}
\end{proof}

\subsection{Equivalent convex reformulation in the frequency domain}
\label{appendix:convex-reformulation}

Fix \(q\in\mathbb Z_{\geq0}\) and any stabilizing policy \(\mK\in\mathcal C_q\). By \cref{lemma:centered-youla-parameterization}, every \(\mathbf Q\in\mathcal Y_q\) determines an internally stabilizing perturbed policy, while every stabilizing policy of arbitrary order is represented at the input-output level by some \(\mathbf Q\in\mathcal Y_q\). Moreover, the corresponding closed-loop transfer matrix~is
\[
\mathbf T_{zd}(\mathbf Q)
=
\mathbf M_{11}
+\mathbf M_{12}\mathbf Q\mathbf M_{21}.
\]
Consequently, the LQG problem \cref{eq:LQG-control} admits the
equivalent reformulation
\begin{equation}
\label{eq:centered-youla-convex-reformulation}
J^\star
=
\min_{\mathbf Q\in\mathcal Y_q}
\Phi_{\mK}(\mathbf Q),
\qquad
\Phi_{\mK}(\mathbf Q)
\coloneq
\left\|
\mathbf M_{11}
+\mathbf M_{12}\mathbf Q\mathbf M_{21}
\right\|_{\mathcal H_2}^2.
\end{equation}

The nominal policy corresponds to \(\mathbf Q=0\), with \(\Phi_{\mK}(0)=J_q(\mK)\). As expected, the objective in \cref{eq:centered-youla-convex-reformulation} is well defined for every \(\mathbf Q\in\mathcal Y_q\). Indeed, \(\mathbf M_{12}\mathbf Q\mathbf M_{21}\) is stable, and its direct feedthrough is
\[
\mathfrak{D}_{12}\mathbf Q(\infty)\mathfrak{D}_{21}=0
\]
because of the zero structures of $\mathfrak{D}_{12}$ and $\mathfrak{D}_{21}$  in \cref{eq:augmented-matrices} and \(\mathbf Q(\infty)\in\mathcal D_q\). Hence, the closed-loop transfer matrix \( \mathbf T_{zd}(\mathbf Q) = \mathbf M_{11}+\mathbf M_{12}\mathbf Q\mathbf M_{21}\) is strictly proper, belongs to $\mathcal{RH}_2$ and thus has a finite \(\mathcal H_2\)-norm for~any~$\mathbf Q\in\mathcal Y_q$. 

Note that $\Phi_{\mK}(\mathbf Q)$ in \cref{eq:centered-youla-convex-reformulation} can be viewed as a convex quadratic functional in $\mathbf{Q}$. From this frequency-domain representation, $J_q(\mK)=\Phi_{\mK}(0)$ achieves the globally optimal value if and only if the first-order derivative of the Youla-parameterized objective vanishes at $\mathbf{Q} =0$. Let 
\begin{equation} \label{eq:first-order-residual-Q}
    \mathcal{L}_{\mK}(\mathbf{Q})\coloneq \langle \mathbf{M}_{11}, \mathbf{M}_{12}\mathbf{Q}\mathbf{M}_{21} \rangle_{\mathcal{H}_2}
\end{equation}
denotes the first-order residual of $\Phi_{\mK}(\mathbf Q)$ at $\mathbf{Q} = 0$. 
We have the following result.

\begin{lemma} \label{lemma:first-order-residual=0}
    Let $q\in\mathbb{Z}_{\geq 0}$ satisfy $\mathcal{C}_q\neq\varnothing$. Then $\mK \in\mathcal{C}_{q}$ is a globally optimal policy, i.e., $J_q(\mK) = J^\star$, if and only if 
    \begin{equation} \label{eq:first-order-residual-zero}
        \mathcal{L}_{\mK}(\mathbf{Q}) = 0, \qquad \forall \mathbf{Q} \in \mathcal Y_q. 
    \end{equation}
\end{lemma}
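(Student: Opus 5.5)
The proof rests on the convex reformulation \cref{eq:centered-youla-convex-reformulation} and the fact that $\mathcal{Y}_q$ is a real vector space containing $0$. Expanding the square for any $\mathbf{Q}\in\mathcal{Y}_q$ and any $t\in\mathbb{R}$ gives
\[
\Phi_{\mK}(t\mathbf{Q}) = \Phi_{\mK}(0) + 2t\,\mathcal{L}_{\mK}(\mathbf{Q}) + t^2\|\mathbf{M}_{12}\mathbf{Q}\mathbf{M}_{21}\|_{\mathcal{H}_2}^2 ,
\]
where $\Phi_{\mK}(0)=J_q(\mK)$. This expansion is legitimate because both $\mathbf{M}_{11}$ and $\mathbf{M}_{12}\mathbf{Q}\mathbf{M}_{21}$ lie in $\mathcal{RH}_2$, as shown just before the lemma. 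The $\mathcal{H}_2$ inner product is real, since all transfer matrices are real-rational and the integrand is conjugate-symmetric in $\omega$, so $\mathcal{L}_{\mK}$ is a real linear functional on $\mathcal{Y}_q$.

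For sufficiency, assume \cref{eq:first-order-residual-zero} holds. Setting $t=1$ in the expansion gives $\Phi_{\mK}(\mathbf{Q})\ge\Phi_{\mK}(0)=J_q(\mK)$ for every $\mathbf{Q}\in\mathcal{Y}_q$.

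To conclude $J_q(\mK)=J^\star$, I will show that $\inf_{\mathbf{Q}\in\mathcal{Y}_q}\Phi_{\mK}(\mathbf{Q})\le J_r(\widehat{\mK})$ for every $r$ and every $\widehat{\mK}\in\mathcal{C}_r$. By part 2 of \cref{lemma:centered-youla-parameterization}, there is a $\mathbf{Q}\in\mathcal{Y}_q$ whose perturbed policy $\mK_\Delta$ has the same controller transfer matrix as $\widehat{\mK}$ and is internally stabilizing. The LQG cost depends only on the controller transfer matrix once internal stability holds, since $\mathbf{T}_{zd}$ is determined by the plant and that transfer matrix. Hence $\Phi_{\mK}(\mathbf{Q})=J(\mK_\Delta)=J_r(\widehat{\mK})$, which gives $J_q(\mK)\le J^\star$. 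The reverse inequality $J_q(\mK)\ge J^\star$ holds by the definition of $J^\star$.

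For necessity, assume $J_q(\mK)=J^\star$ but $\mathcal{L}_{\mK}(\mathbf{Q})\neq0$ for some $\mathbf{Q}$. Choose $t$ small with sign opposite to $\mathcal{L}_{\mK}(\mathbf{Q})$. Then $\Phi_{\mK}(t\mathbf{Q})<\Phi_{\mK}(0)=J^\star$.

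This contradicts optimality once I check that every value $\Phi_{\mK}(\mathbf{Q}')$ with $\mathbf{Q}'\in\mathcal{Y}_q$ is the cost of an actual stabilizing finite-order policy. Part 1 of \cref{lemma:centered-youla-parameterization} provides this: $\mathbf{Q}'$ yields a minimal realization of $\boldsymbol{\Delta}$ and hence a policy $\mK_\Delta\in\mathcal{C}_{q+p}$ with closed-loop map $\mathbf{M}_{11}+\mathbf{M}_{12}\mathbf{Q}'\mathbf{M}_{21}$. Therefore $J_{q+p}(\mK_\Delta)<J^\star$, which is impossible.

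The main obstacle is justifying the identity $\mathbf{T}_{zd,\mK_\Delta}=\mathbf{M}_{11}+\mathbf{M}_{12}\mathbf{Q}\mathbf{M}_{21}$ together with the resulting equality of $\mathcal{H}_2$ costs. This includes checking that the feedthrough of $\mathbf{M}_{12}\mathbf{Q}\mathbf{M}_{21}$ vanishes, which was done above via the structure of $\mathcal{D}_q$. It also requires that internal stability of $\mK_\Delta$ makes the closed-loop $\mathcal{H}_2$ norm equal to $J_{q+p}(\mK_\Delta)$, which follows from the linear-fractional interconnection in \cref{fig:perturbed_sys_Youla_2}. With these facts, the argument reduces to the one-line quadratic expansion above.
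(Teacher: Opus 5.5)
Your proposal is correct and follows essentially the same route as the paper: you expand $\Phi_{\mK}$ as a quadratic in the Youla parameter, use part 2 of the centered Youla lemma for sufficiency (every stabilizing policy's cost equals some $\Phi_{\mK}(\mathbf{Q})$), and use the scaling $t\mathbf{Q}\in\mathcal{Y}_q$ to force the linear term to vanish for necessity. Your explicit appeal to part 1 (that each $\Phi_{\mK}(t\mathbf{Q})$ is the cost of an actual stabilizing policy) only makes visible what the paper uses implicitly through the reformulation $J^\star=\min_{\mathbf{Q}\in\mathcal{Y}_q}\Phi_{\mK}(\mathbf{Q})$.
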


\begin{proof}
    By \cref{lemma:centered-youla-parameterization}, for every $\widehat{\mK} \in \mathcal{C}_r$ of arbitrary order $r$, we can find some \(\mathbf Q\in\mathcal Y_q\) such that $J_r(\widehat{\mK}) = \Phi_{\mK}(\mathbf Q)$. Therefore,
    $$
    \begin{aligned}
J_r(\widehat{\mK}) - J_q(\mK) &= \Phi_{\mK}(\mathbf Q) - \Phi_{\mK}(0) \\
      &= 2\mathcal{L}_{\mK}(\mathbf{Q}) + \left\| \mathbf M_{12}\mathbf Q\mathbf M_{21}
\right\|_{\mathcal H_2}^2.
    \end{aligned}
    $$
    If \cref{eq:first-order-residual-zero} holds, then we have $J_r(\widehat{\mK}) - J_q(\mK)\geq 0$. By the arbitrariness of $r$ and $\widehat{\mK}\in\mathcal{C}_r$, we see that $\mK$ is globally optimal.

    Conversely, suppose $\mK$ is globally optimal. Let $\mathbf{Q}\in \mathcal Y_q$ be arbitrary. Since $ \mathcal Y_q$ is a vector space, $t\mathbf{Q}\in \mathcal Y_q$ for any $t \in \mathbb{R}$. Therefore, we have 
    $$
    \Phi_{\mK}(t\mathbf Q) - \Phi_{\mK}(0) = 2t\mathcal{L}_{\mK}(\mathbf{Q}) + t^2  \left\| \mathbf M_{12}\mathbf Q\mathbf M_{21}
\right\|_{\mathcal H_2}^2 \geq 0, \qquad \forall t \in \mathbb{R}. 
    $$
    The first-order term must vanish, so that $\mathcal{L}_{\mK}(\mathbf{Q}) = 0$. Since $\mathbf{Q}\in \mathcal Y_q$ is arbitrary, we arrive at the desired claim \cref{eq:first-order-residual-zero}. 
\end{proof}

\Cref{lemma:first-order-residual=0} is a direct consequence of the frequency-domain representation \cref{eq:centered-youla-convex-reformulation}. Thanks to the convexity of $\Phi_{\mK}$ in terms of the Youla parameter $\mathbf{Q}$, we get the global optimality condition \cref{eq:first-order-residual-zero}. Note that $\mathcal{L}_{\mK}(\mathbf{Q})$ in \cref{eq:first-order-residual-Q} is a linear functional of $\mathbf{Q}$. Furthermore, we can use the cyclic property of the trace operator to rewrite it into the following form   
\begin{equation} \label{eq:first-order-residual-cyclic}
\begin{aligned}
    \mathcal{L}_{\mK}(\mathbf{Q}) &= \frac{1}{2\pi} \int_{-\infty}^{\infty} \operatorname{tr}\left(\mathbf{M}_{11}(j\omega)^*\mathbf{M}_{12}(j\omega)\mathbf{Q}(j\omega)\mathbf{M}_{21}(j\omega)\right)d\omega \\
    &= \frac{1}{2\pi} \int_{-\infty}^{\infty} \operatorname{tr}\left(\mathbf{M}_{21}(j\omega)\mathbf{M}_{11}(j\omega)^*\mathbf{M}_{12}(j\omega)\mathbf{Q}(j\omega)\right)d\omega \\ 
    &= \frac{1}{2\pi} \int_{-\infty}^{\infty} \operatorname{tr}\left(\mathbf \Psi_{\mK}(j\omega)^*\mathbf{Q}(j\omega)\right) d\omega,
\end{aligned}
\end{equation}
where we denote $\mathbf{\Psi}_{\mK}(j\omega) = \mathbf{M}_{12}(j\omega)^*\mathbf{M}_{11}(j\omega)\mathbf{M}_{21}(j\omega)^*$.

By the conjugate of complex matrices, we have  $H(j\omega)^* = H(-j\omega)^\tr$. 
Recall that for any real-rational transfer matrix $\mathbf{G}$,  its \textit{paraconjugate transpose} is denoted by $\mathbf{G}^{\sim}(s) \coloneq \mathbf{G}(-s)^{\top}$. We can then define 
\begin{equation} \label{eq:paraconjugate-Psi}
    \mathbf{\Psi}_{\mK}(s) = \mathbf{M}_{12}^{\sim}(s)\mathbf{M}_{11}(s)\mathbf{M}_{21}^{\sim}(s)
\end{equation}
which is consistent with $\mathbf{\Psi}_{\mK}(j\omega) = \mathbf{M}_{12}(-j\omega)^\tr \mathbf{M}_{11}(j\omega)\mathbf{M}_{21}(-j\omega)^\tr = \mathbf{M}_{12}(j\omega)^*\mathbf{M}_{11}(j\omega)\mathbf{M}_{21}(j\omega)^*$. 
Even though $ \mathbf{M}_{12}, \mathbf{M}_{11}, \mathbf{M}_{21}$ in \cref{eq:centered-youla-M} are all stable given $\mK \in \mathcal{C}_q$, the new transfer matrix $\mathbf{\Psi}_{\mK}(s)$ in \cref{eq:paraconjugate-Psi} generally has unstable components due to the {paraconjugate transpose} in its definition. We only have $\boldsymbol{\Psi}_{\mK}\in\mathcal{RL}_2$, but it is in general not in $\mathcal{RH}_2$. 

With this definition, for every $\mathbf{Q}_0\in\mathcal{RH}_2$,
we may rewrite \cref{eq:first-order-residual-cyclic} as
\[
    \mathcal{L}_{\mK}(\mathbf{Q}_0)
    =
    \left\langle
        \boldsymbol{\Psi}_{\mK},\mathbf{Q}_0
    \right\rangle_{\mathcal{L}_2}.
\]
Notice, however, that a general $\mathbf{Q}\in\mathcal{Y}_q$ may have nonzero~direct feedthrough and thus need not belong to $\mathcal{RL}_2$. For such $\mathbf{Q}$, the right-hand side of \cref{eq:first-order-residual-cyclic} is understood through its explicit integral representation. This integral is still well defined because $\mathbf{M}_{11}$ and $\mathbf{M}_{12}\mathbf{Q}\mathbf{M}_{21}$ both belong to $\mathcal{RH}_2$. Consequently, by \Cref{lemma:first-order-residual=0},
$\mK\in\mathcal{C}_q$ is globally optimal if and only if
\[
    \frac{1}{2\pi}\int_{-\infty}^{\infty}
    \operatorname{tr}\!\left(
        \boldsymbol{\Psi}_{\mK}(j\omega)^*
        \mathbf{Q}(j\omega)
    \right)\,d\omega
    =0,
    \qquad
    \forall\,\mathbf{Q}\in\mathcal{Y}_q.
\]

\subsection{Proof of \Cref{theorem:global_optimality_RK}} \label{appendix:first-order-residual}

Our last technical ingredient to establish \Cref{theorem:global_optimality_RK} is an explicit stable/antistable decomposition for $\mathbf{\Psi}_{\mK}(s)$, which implies that 
\begin{equation}
    \langle \mathbf{\Psi}_{\mK}, \mathbf{Q}_0 \rangle_{\mathcal{L}_2} = \langle \mathbf{R}_{\mK}, \mathbf{Q}_0 \rangle_{\mathcal{H}_2}, \qquad \forall \mathbf{Q}_0 \in \mathcal{RH}_2, 
\end{equation}
where $\mathbf{R}_{\mK}$, defined in \Cref{theorem:global_optimality_RK}, turns to be the stable component of $\mathbf{\Psi}_{\mK}$. In particular, we have the following result.

\begin{lemma} \label{lemma:stable-decomposition-Psi}
     Let $q\in\mathbb{Z}_{\geq 0}$ satisfy $\mathcal{C}_q\neq\varnothing$, and let $\mK \in\mathcal{C}_{q}$ be arbitrary. The transfer matrix $\mathbf{\Psi}_{\mK}(s)$ in \cref{eq:paraconjugate-Psi} can be decomposed into a stable component and an antistable component as 
     \begin{equation}
         \mathbf{\Psi}_{\mK}(s) = \mathbf{\Psi}_{\mK,-}(s) + \mathbf{R}_{\mK}(s),
     \end{equation}
     where all poles of $\mathbf{\Psi}_{\mK,-}(s)$ have positive real parts and $\mathbf{R}_{\mK}(s)$ is defined in \Cref{theorem:global_optimality_RK} with all poles having negative real parts. Furthermore, both  $\mathbf{\Psi}_{\mK,-}(s)$ and $ \mathbf{R}_{\mK}(s)$ are strictly proper. 
\end{lemma}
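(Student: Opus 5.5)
I plan to compute the product $\boldsymbol{\Psi}_\mK=\mathbf M_{12}^\sim\mathbf M_{11}\mathbf M_{21}^\sim$ directly in state space and split it with the closed-loop Lyapunov solutions. Write $A_c\coloneq A_{\mathrm{cl},\mK}$, $B_c\coloneq B_{\mathrm{cl},\mK}$, $C_c\coloneq C_{\mathrm{cl},\mK}$. From \cref{eq:centered-youla-M} we have
\[
\mathbf M_{12}^\sim(s)=\mathfrak B^\tran(-sI-A_c^\tran)^{-1}C_c^\tran+\mathfrak D_{12}^\tran ,
\]
and similarly for $\mathbf M_{21}^\sim$, so $\boldsymbol\Psi_\mK$ is a product of an antistable factor, a stable factor and another antistable factor. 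The key tool is the standard pair of identities
\[
(-sI-A_c^\tran)^{-1}C_c^\tran C_c(sI-A_c)^{-1}=Y_\mK(sI-A_c)^{-1}+(-sI-A_c^\tran)^{-1}Y_\mK ,
\]
\[
(sI-A_c)^{-1}B_cB_c^\tran(-sI-A_c^\tran)^{-1}=(sI-A_c)^{-1}X_\mK+X_\mK(-sI-A_c^\tran)^{-1},
\]
both of which follow by multiplying \cref{eq:closed-loop_Lyapunov_Y} and \cref{eq:closed-loop_Lyapunov_X} on the left and right by the appropriate resolvents, using $A_c^\tran Y+YA_c=(sI-A_c^\tran\cdot)$-type rearrangements.

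The first step is to handle the left factor. I expand
\[
\mathbf M_{12}^\sim\mathbf M_{11}=\mathfrak B^\tran(-sI-A_c^\tran)^{-1}C_c^\tran C_c(sI-A_c)^{-1}B_c+\mathfrak D_{12}^\tran C_c(sI-A_c)^{-1}B_c .
\]
Applying the first identity gives
\[
\mathbf M_{12}^\sim\mathbf M_{11}=\bigl(\mathfrak D_{12}^\tran C_c+\mathfrak B^\tran Y_\mK\bigr)(sI-A_c)^{-1}B_c+\mathfrak B^\tran(-sI-A_c^\tran)^{-1}Y_\mK B_c ,
\]
which is a stable part plus an antistable part. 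A direct check shows $\mathfrak D_{12}^\tran C_c=\begin{bmatrix}0&RC_\mK\\0&0\end{bmatrix}$, matching the left factor of $\mathbf R_\mK$. I call this row factor $\mathcal C_R$.

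The second step multiplies by $\mathbf M_{21}^\sim=B_c^\tran(-sI-A_c^\tran)^{-1}\mathfrak C^\tran+\mathfrak D_{21}^\tran$. The term coming from the antistable part times $\mathbf M_{21}^\sim$ is a product of antistable strictly proper factors, possibly with a constant, so it is antistable. For the stable part, I apply the second identity to $(sI-A_c)^{-1}B_cB_c^\tran(-sI-A_c^\tran)^{-1}\mathfrak C^\tran$, which produces
\[
\mathcal C_R(sI-A_c)^{-1}X_\mK\mathfrak C^\tran+\mathcal C_R X_\mK(-sI-A_c^\tran)^{-1}\mathfrak C^\tran .
\]
The remaining cross term is $\mathcal C_R(sI-A_c)^{-1}B_c\mathfrak D_{21}^\tran$, and $B_c\mathfrak D_{21}^\tran=\begin{bmatrix}0&0\\B_\mK V&0\end{bmatrix}$. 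Collecting all stable terms yields exactly $\mathcal C_R(sI-A_c)^{-1}\bigl(B_c\mathfrak D_{21}^\tran+X_\mK\mathfrak C^\tran\bigr)=\mathbf R_\mK$, and everything else is grouped into $\boldsymbol\Psi_{\mK,-}$.

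The final step checks the properness and pole-location claims. $\mathbf R_\mK$ is strictly proper and has poles in $\operatorname{eig}(A_c)$, which lies in the open left half-plane. The antistable remainder has poles in $\operatorname{eig}(-A_c^\tran)$. I also need to verify it has no constant term. The only candidate is the product of the constants $\mathfrak D_{12}^\tran$ and $\mathfrak D_{21}^\tran$ with $\mathbf M_{11}$, but $\mathbf M_{11}$ is strictly proper, so no constant arises and every term carries a resolvent.

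I expect the main obstacle to be bookkeeping rather than anything conceptual. I must verify that the sign conventions in the two resolvent identities are correct and that the constant matrices $\mathfrak D_{12}^\tran C_c$ and $B_c\mathfrak D_{21}^\tran$ match the blocks in the definition of $\mathbf R_\mK$, including the $R^{1/2}\cdot R^{1/2}=R$ and $V^{1/2}\cdot V^{1/2}=V$ factors.
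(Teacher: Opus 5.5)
Your proposal is correct and is essentially the paper's argument: your two resolvent identities are the transfer-function form of the paper's similarity transformation $\mathcal{T}_{\mK}$. The $Y_{\mK}$ block decouples the left antistable factor from $A_{\mathrm{cl},\mK}$ and the $-X_{\mK}$ block decouples the right one, so your stable part is exactly $\mathbf{R}_{\mK}$ and your three-term antistable remainder coincides with the paper's $\mathcal{C}_{\mK,-}(sI-\mathcal{A}_{\mK,-})^{-1}\mathcal{B}_{\mK,-}$; your sign conventions and the block checks for $\mathfrak{D}_{12}^{\tran}C_{\mathrm{cl},\mK}$ and $B_{\mathrm{cl},\mK}\mathfrak{D}_{21}^{\tran}$ are all right.
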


The proof is based on a direct state-space computation, and we provide it at the end of this subsection. With \Cref{lemma:stable-decomposition-Psi}, we are now ready to prove  \Cref{theorem:global_optimality_RK}.

\begin{proof}[Proof of \Cref{theorem:global_optimality_RK}]
Suppose $\mK \in \mathcal{C}_q$ is globally optimal, we have $\mathcal{L}_{\mK}(\mathbf{Q}) = 0, \forall \mathbf{Q} \in \mathcal{Y}_q$ by \Cref{lemma:first-order-residual=0}. In particular, for any strictly proper and stable $\mathbf{Q}_0$, we have 
\begin{equation} \label{eq:stable-anti-stable-decomposition-Q}
\begin{aligned}
\mathcal{L}_{\mK}(\mathbf{Q}_0) = \langle \mathbf{\Psi}_{\mK}, \mathbf{Q}_0 \rangle_{\mathcal{L}_2} &= \langle \mathbf{\Psi}_{\mK,-} + \mathbf{R}_{\mK}, \mathbf{Q}_0 \rangle_{\mathcal{L}_2}  \\
&=\langle \mathbf{R}_{\mK}, \mathbf{Q}_0 \rangle_{\mathcal{H}_2}, 
\end{aligned}
\end{equation}
where the second equality is from \Cref{lemma:stable-decomposition-Psi} and the third equality is a standard result that $ \langle \mathbf{\Psi}_{\mK,-}, \mathbf{Q}_0 \rangle_{\mathcal{L}_2} = 0$ since $\mathbf{\Psi}_{\mK,-}$ is orthogonal to any $\mathbf{Q}_0 \in \mathcal{RH}_2$. Now, choosing $\mathbf{Q}_0 = \mathbf{R}_{\mK}$, which belongs to $\mathcal{RH}_2$ by definition, we get 
$$
0 = \langle \mathbf{R}_{\mK}, \mathbf{R}_{\mK} \rangle_{\mathcal{H}_2} = \|\mathbf{R}_{\mK}\|_{\mathcal{H}_2}^2,
$$
which guarantees that $\mathbf{R}_{\mK}(s) \equiv 0$.  

Conversely, suppose $\mathbf{R}_{\mK}(s) \equiv 0$. From \Cref{eq:stable-anti-stable-decomposition-Q}, we naturally have $\mathcal{L}_{\mK}(\mathbf{Q}_0) = 0$ for any $\mathbf{Q}_0 \in \mathcal{RH}_{2}$. 
For any $\mathbf{Q} \in \mathcal{Y}_q$, we can decompose it as 
$
\mathbf{Q} = \mathbf{Q}_0 + D
$
where $\mathbf{Q}_0\in \mathcal{RH}_{2}$ and $D\in\mathcal{D}_q$ is a constant matrix with a special zero block in \cref{eq:centered-youla-feedthrough-space}. Then, thanks to linearity, we have 
$$
\mathcal{L}_{\mK}(\mathbf{Q}) = \mathcal{L}_{\mK}(\mathbf{Q}_0) + \mathcal{L}_{\mK}(D) =  \mathcal{L}_{\mK}(D),
$$
since $\mathcal{L}_{\mK}(\mathbf{Q}_0) = 0$. 
It remains to prove that $\mathcal{L}_{\mK}(D) = 0$ for all $D\in\mathcal{D}_q$.

Fix any ${D}\in\mathcal{D}_q$. For each $\alpha>0$, define
\[
    \mathbf{D}_{\alpha}(s)
    \coloneq
    \frac{\alpha}{s+\alpha}{D}.
\]
Then $\mathbf{D}_{\alpha}\in\mathcal{RH}_2$ by definition, and hence
$
    \mathcal{L}_{\mK}(\mathbf{D}_{\alpha})=0.
$
We then have
\[
\begin{aligned}
\mathcal{L}_{\mK}(D)
={} & \frac{1}{2\pi}\int_{-\infty}^{+\infty}
\operatorname{tr}\left(\mathbf{M}_{11}(j\omega)^*\mathbf{M}_{12}(j\omega)D\mathbf{M}_{21}(j\omega)\right) d\omega \\
={} &
\frac{1}{2\pi}\int_{-\infty}^{+\infty}
\operatorname{tr}\left(\mathbf{M}_{11}(j\omega)^*\mathbf{M}_{12}(j\omega)\lim_{\alpha\rightarrow\infty}\mathbf{D}_\alpha(j\omega)\,\mathbf{M}_{21}(j\omega)\right) d\omega \\
={} & \lim_{\alpha\rightarrow\infty}\frac{1}{2\pi}\int_{-\infty}^{+\infty}
\operatorname{tr}\left(\mathbf{M}_{11}(j\omega)^*\mathbf{M}_{12}(j\omega)\mathbf{D}_\alpha(j\omega)\mathbf{M}_{21}(j\omega)\right) d\omega \\
={} &
\lim_{\alpha\rightarrow\infty}
\mathcal{L}_{\mK}(\mathbf{D}_\alpha) = 0,
\end{aligned}
\]
where the third equality can be justified by the dominated convergence theorem. The proof is now complete. \qedhere

\end{proof}

The main proof idea is that $\mathbf R_{\mK}$ is precisely the stable component of $\boldsymbol{\Psi}_{\mK}$. If the admissible Youla space $\mathcal{Y}_q$ contained only strictly proper stable transfer matrices, the result would follow immediately from the orthogonality between stable and
antistable transfer matrices: Testing against $\mathbf Q_0\in\mathcal{RH}_2$ eliminates the antistable component $\boldsymbol{\Psi}_{\mK,-}$ and leaves only $\mathbf R_{\mK}$.
The additional argument is needed because $\mathcal Y_q$ also contains
admissible constant feedthrough terms $D\in\mathcal D_q$. A nonzero constant transfer matrix does not belong to the continuous-time $\mathcal L_2$ space, and so the standard stable--antistable orthogonality cannot be applied to $D\in\mathcal D_q$ directly. The approximation $\mathbf{D}_\alpha(s)=\frac{\alpha}{s+\alpha} D$ bridges this gap.

We finally prove the orthogonal decomposition in \Cref{lemma:stable-decomposition-Psi}.

\begin{proof}[Proof of \Cref{lemma:stable-decomposition-Psi}]
    We first derive an explicit state-space realization for the transfer matrix $\mathbf{\Psi}_{\mK}(s)$ in \cref{eq:paraconjugate-Psi}. Note that given a transfer matrix $\mathbf{G}(s) = C(sI - A)^{-1}B + D$, its {paraconjugate transpose} can be written as $\mathbf{G}^{\sim}(s) = -B^\tr (sI + A^\tr )^{-1}C^\tr + D^\tr$. Recalling the definition \cref{eq:centered-youla-M}, we have  
    $$
    \begin{aligned}
    \mathbf{M}_{12}^{\sim}(s)
&=(-\mathfrak{B}^\tr)(sI+A_{\mathrm{cl},\mK}^\tr)^{-1}C_{\mathrm{cl},\mK}^\tr+\mathfrak{D}_{12}^\tr,
\\
\mathbf{M}_{21}^{\sim}(s)
&=B_{\mathrm{cl},\mK}^\tr(sI+A_{\mathrm{cl},\mK}^\tr)^{-1}(-\mathfrak{C}^\tr)+\mathfrak{D}_{21}^\tr.
\end{aligned}
    $$
In other words, using the standard state-space notation
$
    \left[
    \begin{array}{c|c}
        A & B\\ \hline
        C & D
    \end{array}
    \right]
    \coloneq
    C(sI-A)^{-1}B+D,
$ 
we have the following state-space realizations
\begin{equation*}
\def\arraystretch{1.25}
\begin{aligned}
    \mathbf{M}_{12}^{\sim}(s)
    &=
    \left[
    \begin{array}{c|c}
        -A_{\mathrm{cl},\mK}^{\tr}
        & C_{\mathrm{cl},\mK}^{\tr}\\ \hline
        -\mathfrak{B}^{\tr}
        & \mathfrak{D}_{12}^{\tr}
    \end{array}
    \right], \;
    \mathbf{M}_{11}(s)
    =
    \left[
    \begin{array}{c|c}
        A_{\mathrm{cl},\mK}
        & B_{\mathrm{cl},\mK}\\ \hline
        C_{\mathrm{cl},\mK}
        & 0
    \end{array}
    \right], \;
    \mathbf{M}_{21}^{\sim}(s)
    =
    \left[
    \begin{array}{c|c}
        -A_{\mathrm{cl},\mK}^{\tr}
        & -\mathfrak{C}^{\tr}\\ \hline
        B_{\mathrm{cl},\mK}^{\tr}
        & \mathfrak{D}_{21}^{\tr}
    \end{array}
    \right].
\end{aligned}
\end{equation*}
Consequently, using the standard cascade operation \cite[Section~3.6]{zhou1996robust}, the transfer matrix $\boldsymbol{\Psi}_{\mK} =\mathbf{M}_{12}^{\sim}\mathbf{M}_{11} \mathbf{M}_{21}^{\sim}$ has the following state-space realization
\begin{equation}
\def\arraystretch{1.25}
\boldsymbol{\Psi}_{\mK}(s)
=
\left[
\begin{array}{ccc|c}
    -A_{\mathrm{cl},\mK}^{\tr}
    & C_{\mathrm{cl},\mK}^{\tr}C_{\mathrm{cl},\mK}
    & 0
    & 0\\
    0
    & A_{\mathrm{cl},\mK}
    & B_{\mathrm{cl},\mK}B_{\mathrm{cl},\mK}^{\tr}
    & B_{\mathrm{cl},\mK}\mathfrak{D}_{21}^{\tr}\\
    0
    & 0
    & -A_{\mathrm{cl},\mK}^{\tr}
    & -\mathfrak{C}^{\tr}\\ \hline
    -\mathfrak{B}^{\tr}
    & \mathfrak{D}_{12}^{\tr}C_{\mathrm{cl},\mK}
    & 0
    & 0
\end{array}
\right]
\label{eq:Psi-cascade-realization}
\end{equation}
Since $\mK\in\mathcal C_q$, the matrix $A_{\mathrm{cl},\mK}$ is Hurwitz. Thus, the realization
in \cref{eq:Psi-cascade-realization} contains stable eigenvalues from $A_{\mathrm{cl},\mK}$ and antistable eigenvalues from $-A_{\mathrm{cl},\mK}^{\tr}$. To separate these two parts, let
$\hat{x}$ denote the state of the realization in \cref{eq:Psi-cascade-realization} and introduce the coordinate transformation
\begin{equation}
    z=\mathcal T_{\mK}\hat{x},
    \qquad
    \mathcal T_{\mK}
    \coloneq
    \begin{bmatrix}
        I & Y_{\mK} & 0\\
        0 & I & -X_{\mK}\\
        0 & 0 & I
    \end{bmatrix}, 
\label{eq:Psi-coordinate-transformation}
\end{equation}
where $X_\mK$ and $Y_\mK$ are the unique solutions to the Lyapunov equations \cref{eq:closed-loop_Lyapunov}. The inverse of $\mathcal T_{\mK}$ is
\[
    \mathcal T_{\mK}^{-1}
    =
    \begin{bmatrix}
        I & -Y_{\mK} & -Y_{\mK}X_{\mK}\\
        0 & I & X_{\mK}\\
        0 & 0 & I
    \end{bmatrix}.
\]
Recall from \cref{eq:closed-loop_Lyapunov_X,eq:closed-loop_Lyapunov_Y} that $
    A_{\mathrm{cl},\mK}X_{\mK}
    +X_{\mK}A_{\mathrm{cl},\mK}^{\tr}
    +B_{\mathrm{cl},\mK}B_{\mathrm{cl},\mK}^{\tr}
    =0$ and $
    A_{\mathrm{cl},\mK}^{\tr}Y_{\mK}
    +Y_{\mK}A_{\mathrm{cl},\mK}
    +C_{\mathrm{cl},\mK}^{\tr}C_{\mathrm{cl},\mK}
    =0
$.
These identities ensure that the $(1,2)$ and $(2,3)$ blocks of the transformed $A$-matrix vanish:
\[
\mathcal{T}_\mK\begin{bmatrix}
-A_{\mathrm{cl},\mK}^{\tr}
    & C_{\mathrm{cl},\mK}^{\tr}C_{\mathrm{cl},\mK}
    & 0 \\
    0
    & A_{\mathrm{cl},\mK}
    & B_{\mathrm{cl},\mK}B_{\mathrm{cl},\mK}^{\tr} \\
    0
    & 0
    & -A_{\mathrm{cl},\mK}^{\tr}
\end{bmatrix}
\mathcal{T}_\mK^{-1}
=\begin{bmatrix}
-A_{\mathrm{cl},\mK}^{\tr}
    & 0
    & Y_{\mK}B_{\mathrm{cl},\mK}
      B_{\mathrm{cl},\mK}^{\tr}
    \\
    0
    & A_{\mathrm{cl},\mK}
    & 0
    \\
    0
    & 0
    & -A_{\mathrm{cl},\mK}^{\tr}
\end{bmatrix}.
\]
Thus, applying \cref{eq:Psi-coordinate-transformation} to
\cref{eq:Psi-cascade-realization} gives
\begin{equation*}
\def\arraystretch{1.25}
\begin{aligned}
\boldsymbol{\Psi}_{\mK}(s)
={} & 
\left[
\begin{array}{ccc|c}
    -A_{\mathrm{cl},\mK}^{\tr}
    & 0
    & Y_{\mK}B_{\mathrm{cl},\mK}
      B_{\mathrm{cl},\mK}^{\tr}
    & Y_{\mK}B_{\mathrm{cl},\mK}
      \mathfrak D_{21}^{\tr}\\
    0
    & A_{\mathrm{cl},\mK}
    & 0
    &  B_{\mathrm{cl},\mK}\mathfrak D_{21}^{\tr}
    +X_{\mK}\mathfrak C^{\tr}\\
    0
    & 0
    & -A_{\mathrm{cl},\mK}^{\tr}
    & -\mathfrak C^{\tr}\\ \hline
    -\mathfrak B^{\tr}
    & \mathfrak D_{12}^{\tr}C_{\mathrm{cl},\mK}
    +\mathfrak B^{\tr}Y_{\mK}
    & (\mathfrak D_{12}^{\tr}C_{\mathrm{cl},\mK}
    +\mathfrak B^{\tr}Y_{\mK})X_{\mK}
    & 0
\end{array}
\right]\\
={} &
\left[
\begin{array}{ccc|c}
    -A_{\mathrm{cl},\mK}^{\tr}
    & Y_{\mK}B_{\mathrm{cl},\mK}
      B_{\mathrm{cl},\mK}^{\tr}
    & 0
    & Y_{\mK}B_{\mathrm{cl},\mK}
      \mathfrak D_{21}^{\tr}\\
    0 & -A_{\mathrm{cl},\mK}^{\tr} & 0
    & -\mathfrak C^{\tr} \\
    0 & 0 & A_{\mathrm{cl},\mK}
    &  B_{\mathrm{cl},\mK}\mathfrak D_{21}^{\tr}
    +X_{\mK}\mathfrak C^{\tr}
    \\ \hline
    -\mathfrak B^{\tr}
    & (\mathfrak D_{12}^{\tr}C_{\mathrm{cl},\mK}
    +\mathfrak B^{\tr}Y_{\mK})X_{\mK}
    & \mathfrak D_{12}^{\tr}C_{\mathrm{cl},\mK}
    +\mathfrak B^{\tr}Y_{\mK} & 0
\end{array}
\right],
\end{aligned}
\end{equation*}
where we reordered the components of the state in the second equality. We group the two antistable state blocks together by defining
\begin{align*}
    \mathcal A_{\mK,-}
    \coloneq
    \begin{bmatrix}
        -A_{\mathrm{cl},\mK}^{\tr}
        &
        Y_{\mK}B_{\mathrm{cl},\mK}
        B_{\mathrm{cl},\mK}^{\tr}\\
        0
        &
        -A_{\mathrm{cl},\mK}^{\tr}
    \end{bmatrix}\!, \; 
    \mathcal B_{\mK,-}
    \coloneq
    \begin{bmatrix}
        Y_{\mK}B_{\mathrm{cl},\mK}
        \mathfrak D_{21}^{\tr}\\
        -\mathfrak C^{\tr}
    \end{bmatrix}\!, \;
    \mathcal C_{\mK,-}
    \coloneq
    \begin{bmatrix}
        -\mathfrak B^{\tr}
        &
        \!\!(\mathfrak D_{12}^{\tr}C_{\mathrm{cl},\mK}
    +\mathfrak B^{\tr}Y_{\mK})X_{\mK}
    \end{bmatrix}\!,
\end{align*}
which gives
\begin{equation}
\def\arraystretch{1.25}
\boldsymbol{\Psi}_{\mK}(s)
=
\left[
\begin{array}{cc|c}
    \mathcal A_{\mK,-}
    & 0
    & \mathcal B_{\mK,-}\\
    0
    & A_{\mathrm{cl},\mK}
    & \mathcal B_{\mathrm{cl},\mK}\mathfrak D_{21}^{\tr}
    +X_{\mK}\mathfrak C^{\tr}\\ \hline
    \mathcal C_{\mK,-}
    & \mathcal \mathfrak D_{12}^{\tr}C_{\mathrm{cl},\mK}
    +\mathfrak B^{\tr}Y_{\mK}
    & 0
\end{array}
\right].
\label{eq:Psi-separated-realization}
\end{equation}
This realization is block diagonal in its antistable and stable
state components. Consequently,
\[
    \boldsymbol{\Psi}_{\mK}(s)
    =
    \boldsymbol{\Psi}_{\mK,-}(s)
    +
    \mathbf R_{\mK}(s),
\]
where $\boldsymbol{\Psi}_{\mK,-}(s) =
    \mathcal C_{\mK,-}
    (sI-\mathcal A_{\mK,-})^{-1}
    \mathcal B_{\mK,-}$ and 
    $$\mathbf R_{\mK}(s)
    =
    \mathcal (\mathcal \mathfrak D_{12}^{\tr}C_{\mathrm{cl},\mK}
    +\mathfrak B^{\tr}Y_{\mK})
    (sI-A_{\mathrm{cl},\mK})^{-1}
    (B_{\mathrm{cl},\mK}\mathfrak D_{21}^{\tr}
    +X_{\mK}\mathfrak C^{\tr}),$$
which is the same as that defined in \Cref{theorem:global_optimality_RK}. Because $A_{\mathrm{cl},\mK}$ is Hurwitz, $\mathbf R_{\mK}$ is strictly proper and stable. The eigenvalues of $\mathcal A_{\mK,-}$ are precisely those of $-A_{\mathrm{cl},\mK}^{\tr}$, each repeated twice, and therefore have positive real parts. Hence $\boldsymbol{\Psi}_{\mK,-}$ is strictly proper and antistable. This proves the claimed stable--antistable decomposition.
\end{proof}

\section{Hessian Computation and Proof of \Cref{lemma:hessian_augmented_scalar}}
\label{appendix:proof_hessian_augmented_scalar}
The proof utilizes the following lemma on the Hessian of the LQG objective function.

\begin{lemma}[{\cite[Lemma 9]{tang2023analysis}}]
\label{lemma:hessian_general}
Fix $q\in\mathbb{Z}_{>0}$ such that $\mathcal{C}_q\neq \varnothing$. Let $\mK=\begin{bmatrix}
0 & C_\mK \\ B_\mK & A_\mK
\end{bmatrix}\in\mathcal{C}_q$. Then for any $\Delta=\begin{bmatrix}
0 & \Delta_{C_\mK} \\
\Delta_{B_\mK} & \Delta_{A_\mK}
\end{bmatrix}\in\mathcal{V}_q$, we have
\[
\begin{aligned}
& D^2 J_q(\mK)[\Delta,\Delta] \\
={} & 2\operatorname{tr}\!\Bigg(
2\begin{bmatrix}
B & 0_{n\times q} \\ 0_{q\times m} & I_q
\end{bmatrix}\Delta\begin{bmatrix}
C & 0_{\ell\times q} \\ 0_{q\times n} & I_q
\end{bmatrix}
X_{\mK,\Delta}' Y_{\mK} + 2
\begin{bmatrix}
0_{n\times m} & 0_{n\times q} \\ C_\mK^\tran R & 0_{q\times q}
\end{bmatrix}\Delta
\begin{bmatrix}
0_{\ell\times n} & 0_{\ell\times q} \\
0_{q\times n} & I_q
\end{bmatrix}X'_{\mK,\Delta} \\
&\qquad\quad+
\begin{bmatrix}
0_{n\times n} & 0_{n\times q} \\
0_{q\times n} & \Delta_{B_\mK} V \Delta_{B_\mK}^\tran
\end{bmatrix} Y_\mK
+\begin{bmatrix}
0_{n\times n} & 0_{n\times q} \\
0_{q\times n} & \Delta_{C_\mK}^\tran R \Delta_{C_\mK}
\end{bmatrix} X_\mK
\Bigg),
\end{aligned}
\]
where $X_{\mK}$ and $Y_{\mK}$ are the solutions to the Lyapunov equation~\cref{eq:closed-loop_Lyapunov_X,eq:closed-loop_Lyapunov_Y}, and $X_{\mK,\Delta}'$ is the solution to the following Lyapunov equation
\[
A_{\mathrm{cl},\mK} X_{\mK,\Delta}'
+X_{\mK,\Delta}' A_{\mathrm{cl},\mK}^\tran
+M_1(X_\mK,\Delta)
+M_1(X_\mK,\Delta)^\tran= 0,
\]
where
\[
\begin{aligned}
M_1(X_\mK,\Delta)\coloneq{} &
\begin{bmatrix}
B & 0_{n\times q} \\ 0_{q\times m} & I_q
\end{bmatrix}\Delta\begin{bmatrix}
C & 0_{\ell\times q} \\ 0_{q\times n} & I_q
\end{bmatrix} X_\mK 
+\begin{bmatrix}
0_{n\times m} & 0_{n\times q} \\
0_{q\times m} & I_q
\end{bmatrix}\Delta\begin{bmatrix}
0_{\ell\times n} & VB_\mK^\tran \\
0_{q\times n} & 0_{q\times q}
\end{bmatrix}.
\end{aligned}
\]
\end{lemma}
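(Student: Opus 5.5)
We will prove the formula by differentiating the standard Lyapunov representation of the cost twice along the line $t\mapsto \mK+t\Delta$, and then eliminating the second derivative of the Gramian using the adjoint (observability) Gramian $Y_\mK$. The starting point is that, for $\mK\in\mathcal{C}_q$,
\[
J_q(\mK)=\operatorname{tr}\!\big(C_{\mathrm{cl},\mK}^\tran C_{\mathrm{cl},\mK}X_\mK\big),
\]
where $X_\mK$ solves \cref{eq:closed-loop_Lyapunov_X}. Since $\mathcal{C}_q$ is open and $A_{\mathrm{cl},\mK+t\Delta}$ stays Hurwitz for small $|t|$, all quantities below are smooth (indeed polynomial or rational) in $t$. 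Write $\mathfrak{B}=\operatorname{diag}(B,I_q)$ and $\mathfrak{C}=\operatorname{diag}(C,I_q)$. The closed-loop data depend on $t$ as follows.
\begin{itemize}
\item The state matrix is affine in $t$: $A_{\mathrm{cl},\mK+t\Delta}=A_{\mathrm{cl},\mK}+t\,\mathfrak{B}\Delta\mathfrak{C}$. Hence $A'=\mathfrak{B}\Delta\mathfrak{C}$ and $A''=0$.
\item The input term $B_{\mathrm{cl}}B_{\mathrm{cl}}^\tran=\operatorname{diag}(W,B_\mK VB_\mK^\tran)$ has first derivative $\operatorname{diag}(0,\Delta_{B_\mK}VB_\mK^\tran+B_\mK V\Delta_{B_\mK}^\tran)$ and second derivative $\operatorname{diag}(0,2\Delta_{B_\mK}V\Delta_{B_\mK}^\tran)$.
\item The output term $C_{\mathrm{cl}}^\tran C_{\mathrm{cl}}=\operatorname{diag}(Q,C_\mK^\tran RC_\mK)$ has first derivative $\operatorname{diag}(0,\Delta_{C_\mK}^\tran RC_\mK+C_\mK^\tran R\Delta_{C_\mK})$ and second derivative $\operatorname{diag}(0,2\Delta_{C_\mK}^\tran R\Delta_{C_\mK})$.
\end{itemize}

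First, differentiate \cref{eq:closed-loop_Lyapunov_X} once. The derivative $X'$ solves
\[
A_{\mathrm{cl},\mK}X'+X'A_{\mathrm{cl},\mK}^\tran+A'X_\mK+X_\mK A'^\tran+(B_{\mathrm{cl}}B_{\mathrm{cl}}^\tran)'=0 .
\]
We check by block multiplication that $A'X_\mK+X_\mK A'^\tran+(B_{\mathrm{cl}}B_{\mathrm{cl}}^\tran)'=M_1+M_1^\tran$. The point is that
\[
\begin{bmatrix}0&0\\0&I_q\end{bmatrix}\Delta\begin{bmatrix}0&VB_\mK^\tran\\0&0\end{bmatrix}=\operatorname{diag}(0,\Delta_{B_\mK}VB_\mK^\tran).
\]
So $X'=X'_{\mK,\Delta}$. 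Differentiating once more gives
\[
A_{\mathrm{cl},\mK}X''+X''A_{\mathrm{cl},\mK}^\tran+2(A'X'+X'A'^\tran)+(B_{\mathrm{cl}}B_{\mathrm{cl}}^\tran)''=0 .
\]

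Second, apply the product rule to the cost:
\[
J''=\operatorname{tr}\big((C^\tran C)''X_\mK\big)+2\operatorname{tr}\big((C^\tran C)'X'\big)+\operatorname{tr}\big(C^\tran C\,X''\big).
\]
The key step is the adjoint identity: for any $Z$ solving $A_{\mathrm{cl}}Z+ZA_{\mathrm{cl}}^\tran+N=0$, we have $\operatorname{tr}(C_{\mathrm{cl}}^\tran C_{\mathrm{cl}}Z)=\operatorname{tr}(Y_\mK N)$. This follows by pairing with \cref{eq:closed-loop_Lyapunov_Y} and using cyclicity of the trace. Applying it to $Z=X''$ gives
\[
\operatorname{tr}(C^\tran C\,X'')=4\operatorname{tr}(A'X'Y_\mK)+2\operatorname{tr}\big(\operatorname{diag}(0,\Delta_{B_\mK}V\Delta_{B_\mK}^\tran)Y_\mK\big),
\]
where we used the symmetry of $X'$ and $Y_\mK$. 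Since $X'$ is symmetric, the middle term of $J''$ equals $4\operatorname{tr}\big(\operatorname{diag}(0,C_\mK^\tran R\Delta_{C_\mK})X'\big)$. We then check that $\operatorname{diag}(0,C_\mK^\tran R\Delta_{C_\mK})$ is exactly $\begin{bmatrix}0&0\\C_\mK^\tran R&0\end{bmatrix}\Delta\begin{bmatrix}0&0\\0&I_q\end{bmatrix}$. The first term of $J''$ is $2\operatorname{tr}\big(\operatorname{diag}(0,\Delta_{C_\mK}^\tran R\Delta_{C_\mK})X_\mK\big)$. Adding the three contributions and factoring out $2$ reproduces the stated formula term by term.

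The calculus here is routine. The main obstacle is bookkeeping: matching the abstract derivatives to the specific block expressions in the statement, including $M_1$, the selector matrices, and the factors of $2$ coming from symmetrization. We will handle this by computing each block product explicitly for $\Delta=\begin{bmatrix}0&\Delta_{C_\mK}\\ \Delta_{B_\mK}&\Delta_{A_\mK}\end{bmatrix}$. We also need to justify that differentiation commutes with solving the Lyapunov equations. This holds because the solution map $N\mapsto Z$ is a fixed linear isomorphism depending smoothly (rationally) on a Hurwitz $A_{\mathrm{cl}}$.
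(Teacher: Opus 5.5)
Your proof is correct: differentiating the closed-loop Lyapunov equation twice along $\mK+t\Delta$ (with $A_{\mathrm{cl}}'=\mathfrak{B}\Delta\mathfrak{C}$, $A_{\mathrm{cl}}''=0$) and eliminating $X''$ via the duality $\operatorname{tr}(C_{\mathrm{cl},\mK}^\tran C_{\mathrm{cl},\mK}Z)=\operatorname{tr}(Y_\mK N)$ reproduces every term. Your block identifications for $M_1$, the selector products, and the factors of $2$ all check out. The paper gives no argument of its own here, only the citation \cite[Lemma 9]{tang2023analysis}, and your computation is the standard Lyapunov-differentiation derivation behind that formula.
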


We next prove \cref{lemma:hessian_augmented_scalar}. Denote
$
\overline{\mK} = \mK\oplus\lambda I_p
$
for notational simplicity. We then have
\[
A_{\mathrm{cl},\overline\mK}
=\begin{bmatrix}
A_{\mathrm{cl},\mK} & 0_{(n+q)\times p} \\
0_{p\times(n+q)} & \lambda I_p
\end{bmatrix},
\quad
B_{\mathrm{cl},\overline\mK}
=\begin{bmatrix}
B_{\mathrm{cl},\mK} \\
0_{p\times(n+\ell)}
\end{bmatrix},
\quad
C_{\mathrm{cl},\overline\mK}
=\begin{bmatrix}
C_{\mathrm{cl},\mK} & 0_{(n+m)\times p}
\end{bmatrix}.
\]
The Lyapunov equations \cref{eq:closed-loop_Lyapunov_X,eq:closed-loop_Lyapunov_Y} for the policy $\overline{\mK}$ now become
\begin{align}
\begin{bmatrix}
A_{\mathrm{cl},\mK} & 0 \\
0 & \lambda I_p
\end{bmatrix}X_{\overline\mK}
+X_{\overline\mK}
\begin{bmatrix}
A_{\mathrm{cl},\mK} & 0 \\
0 & \lambda I_p
\end{bmatrix}^\tran
+\begin{bmatrix}
B_{\mathrm{cl},\mK}B_{\mathrm{cl},\mK}^\tran & 0 \\ 0 & 0
\end{bmatrix} ={} & 0, 
\nonumber \\
\begin{bmatrix}
A_{\mathrm{cl},\mK} & 0 \\
0 & \lambda I_p
\end{bmatrix}^\tran Y_{\overline\mK}
+Y_{\overline\mK}
\begin{bmatrix}
A_{\mathrm{cl},\mK} & 0 \\
0 & \lambda I_p
\end{bmatrix}
+\begin{bmatrix}
C_{\mathrm{cl},\mK}^\tran C_{\mathrm{cl},\mK} & 0 \\ 0 & 0
\end{bmatrix} ={} & 0, \nonumber
\end{align}
Since $\lambda<0$ and $\mK\in\mathcal{C}_q$, the matrix $A_{\mathrm{cl},\overline\mK}$ is Hurwitz stable, and thus the above two Lyapunov equations have unique solutions. It is then straightforward to check that their solutions are
\[
X_{\overline\mK} = \begin{bmatrix}
X_\mK & 0_{(n+q)\times p} \\ 0_{p\times(n+q)} & 0_{p\times p}
\end{bmatrix},
\qquad 
Y_{\overline\mK} = \begin{bmatrix}
Y_\mK & 0_{(n+q)\times p} \\ 0_{p\times(n+q)} & 0_{p\times p}
\end{bmatrix},
\]
where $X_\mK$ and $Y_\mK$ are the solutions to the Lyapunov equations~\cref{eq:closed-loop_Lyapunov_X,eq:closed-loop_Lyapunov_Y} for the policy $\mK$.

Now fix $\Delta=\begin{bmatrix}
0 & \Delta_{12} \\ \Delta_{21} & 0
\end{bmatrix}\in\mathcal{M}_{q,p}$, and partition $\Delta$ as follows:
\[
\Delta = \begin{bmatrix}
0_{(m+q)\times(\ell+q)} & \Delta_{12} \\
\Delta_{21} & 0_{p\times p}
\end{bmatrix}
=
\begin{bmatrix}
0_{m\times\ell} & 0_{q\times\ell} & \Delta_{C} \\
0_{q\times\ell} & 0_{q\times q} & \Delta_{A,1} \\
\Delta_B & \Delta_{A,2} & 0_{p\times p}
\end{bmatrix},
\]
Applying \cref{lemma:hessian_general} to the policy $\overline\mK$ leads to
\begin{equation}
\label{eq:hessian_computation_step1}
\begin{aligned}
& D^2 J_{q+p}(\overline\mK)[\Delta,\Delta] \\
={} & 4\operatorname{tr}\!\left(
\begin{bmatrix}
B & 0_{n\times(q+p)} \\ 0_{(q+p)\times m} & I_{q+p}
\end{bmatrix}\Delta\begin{bmatrix}
C & 0_{\ell\times (q+p)} \\ 0_{(q+p)\times n} & I_{q+p}
\end{bmatrix}
X_{\overline\mK,\Delta}' Y_{\overline\mK}\right) \\
&+ 4\operatorname{tr}\!\left(
\begin{bmatrix}
0_{n\times m} & 0_{n\times (q+p)} \\ \begin{bmatrix}
C_\mK^\tran \\
0_{p\times m}
\end{bmatrix} R & 0_{(q+p)\times (q+p)}
\end{bmatrix}\Delta
\begin{bmatrix}
0_{\ell\times n} & 0_{\ell\times (q+p)} \\
0_{(q+p)\times n} & I_{q+p}
\end{bmatrix}X'_{\mK,\Delta}\right) \\
&+2\operatorname{tr}\!\left(
\begin{bmatrix}
0_{n\times n} & 0_{n\times (q+p)} \\
0_{(q+p)\times n} & \begin{bmatrix}
0_{q\times \ell} \\
\Delta_B
\end{bmatrix}
V \begin{bmatrix}
0_{\ell\times q} & \Delta_B^\tran
\end{bmatrix}
\end{bmatrix} Y_\mK
+\begin{bmatrix}
0_{n\times n} & 0_{n\times (q+p)} \\
0_{(q+p)\times n} & 
\begin{bmatrix}
0_{q\times m} \\
\Delta_C^\tran
\end{bmatrix} R \begin{bmatrix}
0_{m\times q} & \Delta_C
\end{bmatrix}
\end{bmatrix} X_\mK
\right),
\end{aligned}
\end{equation}
where $X_{\overline\mK,\Delta}'$ is the solution to the Lyapunov equation
\begin{equation}
\label{eq:hessian_Lyapunov_Xprime}
\begin{bmatrix}
A_{\mathrm{cl},\mK} & 0_{(n+q)\times p} \\
0_{p\times(n+q)} & \lambda I_p
\end{bmatrix} X_{\overline\mK,\Delta}'
+X_{\overline\mK,\Delta}' \begin{bmatrix}
A_{\mathrm{cl},\mK} & 0_{(n+q)\times p} \\
0_{p\times(n+q)} & \lambda I_p
\end{bmatrix}^\tran
+M_1(X_{\overline\mK},\Delta)
+M_1(X_{\overline\mK},\Delta)^\tran
= 0,
\end{equation}
with
\[
\begin{aligned}
M_1(X_{\overline\mK},\Delta)={} &
\begin{bmatrix}
B & 0_{n\times (q+p)} \\ 0_{(q+p)\times m} & I_{q+p}
\end{bmatrix}\Delta\begin{bmatrix}
C & 0_{\ell\times (q+p)} \\ 0_{(q+p)\times n} & I_{q+p}
\end{bmatrix} X_{\overline\mK} \\
&
+\begin{bmatrix}
0_{n\times m} & 0_{n\times (q+p)} \\
0_{(q+p)\times m} & I_{q+p}
\end{bmatrix}\Delta\begin{bmatrix}
0_{\ell\times n} & V\begin{bmatrix} B_\mK^\tran & 0_{\ell\times p}\end{bmatrix} \\
0_{(q+p)\times n} & 0_{(q+p)\times (q+p)}
\end{bmatrix}.
\end{aligned}
\]

In order to simplify the expression for $D^2 J_{q+p}(\overline\mK)[\Delta,\Delta]$, we first note that
\[
\begin{bmatrix}
0_{n\times n} & 0_{n\times (q+p)} \\
0_{(q+p)\times n} & \begin{bmatrix}
0_{q\times \ell} \\
\Delta_B
\end{bmatrix}
V \begin{bmatrix}
0_{\ell\times q} & \Delta_B^\tran
\end{bmatrix}
\end{bmatrix} Y_\mK
=\begin{bmatrix}
0_{(n+q)\times(n+q)} & 0_{(n+q)\times p} \\
0_{p\times(n+q)} & \Delta_B V\Delta_B^\tran
\end{bmatrix}
\begin{bmatrix}
Y_\mK & 0_{(n+q)\times p} \\ 0_{p\times(n+q)} & 0_{p\times p}
\end{bmatrix} =0,
\]
and similarly
\[
\begin{bmatrix}
0_{n\times n} & 0_{n\times (q+p)} \\
0_{(q+p)\times n} & 
\begin{bmatrix}
0_{q\times m} \\
\Delta_C^\tran
\end{bmatrix} R \begin{bmatrix}
0_{m\times q} & \Delta_C
\end{bmatrix}
\end{bmatrix} X_\mK = 0.
\]
Therefore the terms in the last line of~\cref{eq:hessian_computation_step1} all vanish. Next, we denote
\[
\mathfrak{B} = \begin{bmatrix}
B & 0_{n\times q} \\ 0_{q\times m} & I_{q}
\end{bmatrix},
\qquad
\mathfrak{C} = \begin{bmatrix}
C & 0_{\ell\times q} \\ 0_{q\times n} & I_q
\end{bmatrix},
\]
so that
\begin{equation}
\label{eq:hessian_computation_BdeltaC}
\begin{aligned}
& \begin{bmatrix}
B & 0_{n\times (q+p)} \\ 0_{(q+p)\times m} & I_{q+p}
\end{bmatrix}\Delta\begin{bmatrix}
C & 0_{\ell\times (q+p)} \\ 0_{(q+p)\times n} & I_{q+p}
\end{bmatrix} \\
={} &
\begin{bmatrix}
\mathfrak{B} & 0_{(n+q)\times p} \\
0_{p\times (m+q)} & I_p
\end{bmatrix}
\begin{bmatrix}
0_{(m+q)\times(\ell+q)} & \Delta_{12} \\
\Delta_{21} & 0_{p\times p}
\end{bmatrix}
\begin{bmatrix}
\mathfrak{C} & 0_{(\ell+q)\times p} \\
0_{p\times (n+q)} & I_p
\end{bmatrix} \\
={} &
\begin{bmatrix}
0_{(n+q)\times(n+q)} & \mathfrak{B}\Delta_{12} \\
\Delta_{21}\mathfrak{C} & 0_{q\times q}
\end{bmatrix}.
\end{aligned}
\end{equation}
By plugging~\cref{eq:hessian_computation_BdeltaC} back to~\cref{eq:hessian_computation_step1} and introducing the matrix
\[
\Pi_{C_\mK} = \begin{bmatrix}
0_{n\times m} & 0_{n\times q} \\
C_\mK^\tran R & 0_{q\times q}
\end{bmatrix},
\]
we get
\begin{equation}
\label{eq:hessian_computation_step2}
\begin{aligned}
& D^2 J_{q+p}(\overline\mK)[\Delta,\Delta] \\
={} &
4\operatorname{tr}
\left(
\begin{bmatrix}
Y_\mK & 0_{(n+q)\times p} \\
0_{p\times(n+q)} & 0_{p\times p}
\end{bmatrix}
\begin{bmatrix}
0_{(n+q)\times(n+q)} & \mathfrak{B}\Delta_{12} \\
\Delta_{21}\mathfrak{C} & 0_{q\times q}
\end{bmatrix}X_{\overline\mK,\Delta}'
\right) \\
&
+4\operatorname{tr}\!\left(
\begin{bmatrix}
\Pi_{C_\mK} & 0_{(n+q)\times p} \\ 
0_{p\times(m+q)} & 0_{p\times p}
\end{bmatrix}\begin{bmatrix}
0_{(m+q)\times(\ell+q)} & \Delta_{12} \\
\Delta_{21} & 0_{p\times p}
\end{bmatrix}
\begin{bmatrix}
\ast & 0_{(\ell+q)\times p} \\
0_{p\times (n+q)} & I_{p}
\end{bmatrix}X'_{\overline\mK,\Delta}\right) \\
={} &
4\operatorname{tr}
\!\left(
X'_{\overline\mK,\Delta}\begin{bmatrix}
0_{(n+q)\times (n+q)} &
(Y_\mK\mathfrak{B}+\Pi_{C_\mK})\Delta_{12} \\
0_{p\times(n+q)} & 0_{p\times p}
\end{bmatrix}
\right),
\end{aligned}
\end{equation}
where we used $\operatorname{tr}(M_1M_2)=\operatorname{tr}(M_2M_1)$ in the calculation, and $\ast$ represents a block whose value is irrelevant to the final result.

Next we solve for $X_{\overline\mK,\Delta}'$. Note that by~\cref{eq:hessian_computation_step2}, we only need to find the first $(n+q)$ columns of $X_{\overline\mK,\Delta}'$. Denote
\[
\Pi_{B_\mK} = \begin{bmatrix}
0_{\ell\times n} & VB_\mK^\tran \\
0_{q\times n} & 0_{q\times q}
\end{bmatrix}.
\]
Then
\[
\begin{aligned}
M_1(X_{\overline\mK},\Delta)
={} &
\begin{bmatrix}
0_{(n+q)\times(n+q)} & \mathfrak{B}\Delta_{12} \\
\Delta_{21}\mathfrak{C} & 0_{q\times q}
\end{bmatrix}\begin{bmatrix}
X_\mK & 0_{(n+q)\times p} \\
0_{p\times(n+q)} & 0_{p\times p}
\end{bmatrix} \\
&+
\begin{bmatrix}
\ast & 0_{(n+q)\times p} \\
0_{p\times (m+q)} & I_p
\end{bmatrix}\begin{bmatrix}
0_{(m+q)\times(\ell+q)} & \Delta_{12} \\
\Delta_{21} & 0_{p\times p}
\end{bmatrix}
\begin{bmatrix}
\Pi_{B_\mK} & 0_{(\ell+q)\times p} \\
0_{p\times(n+q)} & 0_{p\times p}
\end{bmatrix} \\
={} &
\begin{bmatrix}
0_{(n+q)\times(n+q)} & 0_{(n+q)\times p} \\
\Delta_{21}(\mathcal{C}X_\mK+\Pi_{B_\mK}) & 0_{q\times q}
\end{bmatrix}.
\end{aligned}
\]
The Lyapunov equation~\cref{eq:hessian_Lyapunov_Xprime} for $X_{\overline\mK,\Delta}'$ is then
\[
\begin{aligned}
0 ={} & \begin{bmatrix}
A_{\mathrm{cl},\mK} & 0_{(n+q)\times p} \\
0_{p\times(n+q)} & \lambda I_p
\end{bmatrix} X_{\overline\mK,\Delta}'
+X_{\overline\mK,\Delta}' \begin{bmatrix}
A_{\mathrm{cl},\mK} & 0_{(n+q)\times p} \\
0_{p\times(n+q)} & \lambda I_p
\end{bmatrix}^\tran \\
& +
\begin{bmatrix}
0_{(n+q)\times(n+q)} & (\mathcal{C}X_\mK + \Pi_{B_\mK})^\tran\Delta_{21}^\tran \\
\Delta_{21}(\mathcal{C}X_\mK + \Pi_{B_\mK}) & 0_{q\times q}
\end{bmatrix},
\end{aligned}
\]
The solution is then
\[
\begin{aligned}
X_{\overline\mK,\Delta}'
={} & \!\int_0^{+\infty}
\!
\begin{bmatrix}
\exp(tA_{\mathrm{cl},\mK}) & \!\!\!0_{(n+q)\times p} \\
0_{p\times(n+q)} & \!\!\!e^{\lambda t} I_p
\end{bmatrix}\!\!
\begin{bmatrix}
0_{(n+q)\times(n+q)} & \ast \\
\Delta_{21}(\mathcal{C}X_\mK\!+\!\Pi_{B_\mK}) &  \ast
\end{bmatrix}\!\!
\begin{bmatrix}
\exp(tA_{\mathrm{cl},\mK}^\tran) & \!\!\! 0_{(n+q)\times p} \\
0_{p\times(n+q)} & \!\!\! e^{\lambda t} I_p
\end{bmatrix}\!\mathrm{d}t \\
={} &
\int_0^{+\infty}
\begin{bmatrix}
0_{(n+q)\times(n+q)} & \ast \\
\Delta_{21}(\mathcal{C}X_\mK\!+\!\Pi_{B_\mK})
\exp\!\left[t(\lambda I\!+\!A_{\mathrm{cl},\mK})^\tran\right] & \ast
\end{bmatrix}\mathrm{d}t \\
={} &
\begin{bmatrix}
0_{(n+q)\times(n+q)} & 
\ast \\
\Delta_{21}(\mathcal{C}X_\mK\!+\!\Pi_{B_\mK})
(-\lambda I\!-\!A_{\mathrm{cl},\mK})^{-\tran}
& \ast
\end{bmatrix},
\end{aligned}
\]
where we used the identity $\int_0^{+\infty}\exp(tM)\,\mathrm{d}t=-M^{-1}$ when $M$ is Hurwitz stable; the matrix $\lambda I+A_{\mathrm{cl},\mK}$ is Hurwitz stable because $A_{\mathrm{cl},\mK}$ is Hurwitz stable and $\lambda<0$. By plugging the above expression into~\cref{eq:hessian_computation_step2}, we get
\[
D^2 J_{q+p}(\overline\mK)[\Delta,\Delta] 
=4\operatorname{tr}
\begin{bmatrix}
\Delta_{21}(\mathcal{C}X_\mK+\Pi_{B_\mK})
(-\lambda I-A_{\mathrm{cl},\mK})^{-\tran}(Y_\mK\mathfrak{B}+\Pi_{C_\mK})\Delta_{12}
\end{bmatrix}.
\]
The proof is now complete.

\end{document}